\documentclass{amsart}
\usepackage{amsaddr}

\usepackage{ dsfont }
\usepackage{amssymb, amsmath,amsfonts,amsthm,color,bigints, physics}
\usepackage{tikz}
\usepackage[utf8]{inputenc}
\usepackage[T1]{fontenc}
\usepackage{biblatex}

\usepackage{times}
\usepackage{mathtools, subfigure, mdframed}
\usepackage{a4wide}
\usepackage{pifont}
\usepackage{ifthen}
\usepackage[normalem]{ulem}
\usepackage{eso-pic}
\usepackage{enumitem}
\usepackage{units}
\usetikzlibrary{shapes.geometric}
\usetikzlibrary{arrows.meta,arrows}
\usepackage{calligra}
\usepackage{algorithm}
\usepackage{algorithmic}
\newcommand{\frechet}{Fr\'{e}chet }
\usepackage{ upgreek }
\usepackage{mathrsfs}
\usepackage{enumerate}
\usepackage{xspace}
\usepackage{hyperref}  
\usepackage{framed}
\usepackage{bbm}
\usepackage{float}
\usepackage{comment}
\usepackage{textcomp}
\usepackage{url}
\usepackage{srcltx} 
\usepackage{hyperref} 

\usepackage{ wasysym }
\usepackage{graphicx}
\usepackage{tabularx}
\usepackage{array}
\usepackage{setspace}
\usepackage{color}
\usepackage{listings}
\usepackage{url}
\usepackage{wrapfig}
\usepackage{sidecap}
\usepackage{multirow}
\usepackage{tabularx}
\usepackage[intoc]{nomencl}
\usepackage{ifthen}
\usepackage{amsthm}
\usepackage{mathrsfs}
\newcommand{\R}{\mathbb{R}}
\newcommand{\N}{\mathbb{N}}

\newtheorem{theorem}{Theorem}[section]
\newtheorem{lemma}[theorem]{Lemma}

\theoremstyle{definition}
\newtheorem{definition}[theorem]{Definition}

\newtheorem{cor}[theorem]{Corollary}

\newtheorem{assumptions}[theorem]{Assumptions}
\theoremstyle{remark}
\newtheorem{remark}[theorem]{Remark}

\newcommand{\be}{\begin{equation} \label}
\newcommand{\ee}{\end{equation}}
\newcommand{\bea}{\begin{eqnarray}\label}
\newcommand{\eea}{\end{eqnarray}}
\newcommand{\lbal}{\left\{ \begin{array}{l}}
\newcommand{\lball}{\left\{ \begin{array}{ll}}
\newcommand{\ear}{\end{array} \right.}

\expandafter\newcommand\csname peeps\endcsname{p_{1,\varepsilon}}
\expandafter\newcommand\csname pzeps\endcsname{p_{2,\varepsilon}}
\expandafter\newcommand\csname pdeps\endcsname{p_{3,\varepsilon}}

\newcommand{\peps}{\rho_\varepsilon}
\newcommand{\Geps}{\Gamma_{\varepsilon}}

\newcommand{\beps}{b_{\varepsilon}}
\newcommand{\keps}{k_{\varepsilon}}

\newcommand{\eps}{\varepsilon}
\newcommand{\io}{\int_\Omega}

\usepackage{comment}
\newcommand{\vp}{\varphi}
\newcommand{\nn}{\nonumber}
\newcommand{\bom}{\overline\Omega}
\newcommand{\Om}{\Omega}
\newcommand{\tme}{T_{max,\eps}}
\numberwithin{equation}{section}
\newcommand{\dt}{\frac{d}{dt}}

\definecolor{bluegray}{rgb}{0.4, 0.6, 0.8}

\begin{document}
\allowdisplaybreaks
\title{Existence of large-data solutions to a thermo-piezoelectric system and forward operator analysis for associated inverse problems}

\author{Torben J. Fricke}
\address{Universit\"at Paderborn, Institut f\"ur Mathematik\\ 33098 Paderborn, Germany\\ tjfricke@math.uni-paderborn.de}
\author{Raphael Kuess}
\address{Humboldt-Universit\"at zu Berlin, Institut f\"ur Mathematik\\ 10099 Berlin, Germany\\ raphael.kuess@hu-berlin.de}
\author{Felix Meyer}
\address{Universit\"at Paderborn, Institut f\"ur Mathematik\\ 33098 Paderborn, Germany\\ felix.meyer@math.uni-paderborn.de}
\date{\today}

\maketitle

\begin{abstract}
We consider an inverse problem governed by the initial-boundary value problem for the thermo-piezoelectric Kelvin-Voigt damped dynamical system 
\begin{align*}\left\{ \begin{array}{l} \rho(z,t) u_{tt}- \frac{\dd{}}{\dd{z}}\left(\Gamma(\Theta) u_{zt} +p_1 u_z+p_2(z,t)\phi_z^0 +p_2(z,t)\chi_z- \beta \Theta\right)=0,\vspace{1mm}\\
-\frac{\dd{}}{\dd{z}}(p_2(z,t)u_z-p_3(z,t)\phi_z^0-p_3(z,t)\chi_z)=0,\vspace{1mm}\\
b(z,t) \Theta_t-\frac{\dd{}}{\dd{z}}\left(k(z,t)\Theta_z\right) - \Gamma(\Theta) u_{zt}^2+\beta \Theta u_{zt}=0, \end{array} \right.
 \end{align*}
in an open bounded interval $\Omega\subset\mathbb{R}$, for the evolution of the displacement variable $u$, the electric potential $\phi^0$ and the temperature $\Theta\geq 0$, where $\chi$ is a given Dirichlet lift function. Assuming that
the material coefficients $\beta, p_1 \in \R^+$ and the 
the material functions $\rho$, $\Gamma$, $p_2$, $p_3$, $b$  and $k$ are strictly positive and bounded, 
a global-in-time existence result is established for weak solutions. The present manuscript demonstrates that this can be achieved under energy- and entropy-minimal assumptions, in the sense that global weak solutions are shown to exist for any initial data 
$$u_0\in W^{1,2}(\Omega)\text{ with }u_0|_{\partial\Om}\in\mathbb{R},\quad u_{0t}\in L^2(\Omega)\text{ with }u_{0t}|_{\partial\Om}\in\mathbb{R}\quad\text{and}\quad 0\le\Theta_0\in L^2(\Omega).$$
The qualitative analysis of the evolution problem then allows to model and analyze the structural properties of the corresponding 
forward operator that naturally arises in inverse parameter identification settings. 
Therein, two modeling approaches of the observation operator as approximations of the electrical surface charge are presented and results on their well-definedness and boundedness are established. 
With the results on well-definedness and boundedness of the model operator, established in this paper as well, results on well-definedness, boundedness and continuous Fr\'{e}chet differentiability of the forward operator are presented.\\

\noindent\textbf{Key words: }viscous wave equation; inverse problems thermoviscoelasticity; nonlinear acoustics\\
\noindent\textbf{MSC 2020: }74H20, 74F05, 74H75, 35D30, 35R30, 35L05


\end{abstract}
\pagestyle{plain}

\newpage
\section{Introduction}
\label{sec:intro}

\noindent Inverse parameter identification problems are usually comprised of a model and additional observations and aims at identifying a parameter function $ f $, which appears in the underlying model. Therefore, the model contains the searched for parameter $ f $ and the state, respectively the solution to the PDE $ l,  $ which can be modeled as 
 \begin{equation}\label{sys1}
  A(f, l)=0 .
 \end{equation}
 Frequently, $A$ can be referred to as a differential operator.
In order to recover information on the parameter $ f $, 
we have given noisy observations $y^\delta$, where we 
assume that they obey the deterministic and known noise level $ \delta>0 $\[ \norm{y-y^{\delta}} \leq \delta .\] 
We denote the noiseless modeled observation data with $ y,  $ and the observation operator with $C$, i.e.,
\begin{equation}\label{sys2}
C(f,l)=y.
\end{equation}  
A classical approach to formulate and solve inverse parameter identification problems is the reduced approach, where the model is eliminated by introducing a so-called parameter-to-state map 
$S$,
which maps an arbitrary fixed parameter $ f $ to the corresponding state $l$. This approach needs bijectivity of the model operator $A$, i.e., existence and uniqueness of solutions to the underlying PDE. 
Alternatively, one can consider the all-at-once approach, where a larger system, composed of $A$ and the $C$ simultaneously defines the forward operator. This yields that the observations and the underlying model as a system for $ (f, l) $, where two infinite dimensional variables $ f $ and $ l $ are to be determined, is used. Consequently, the forward operator reads as  \[ {F}(f, l)=\left(\begin{array}{c}A(f, l)\\C(f,l)\end{array}\right)=\left(\begin{array}{c}0\\y\end{array}\right)=\textbf{y},  \] where $ \textbf{y} $ contains the right hand side of the model and the given data. This approach only needs the existence of solutions to the underlying PDE model. 
Such inverse problems are typically ill-posed, i.e.  $ F $ is not continuously invertible and our given measurements $ y^{\delta} $ are noisy.

\noindent In this paper the model operator $A$ describes the
one-dimensional piezoelectric model posed on a bounded open interval $\Omega\subset\mathbb{R}$. Throughout this paper, we study the coupled dynamical system
\be{sys0}
\lball
\rho(z,t) u_{tt}-\frac{\dd{}}{\dd{z}} \left(\Gamma(\Theta) u_{zt}
+p(z,t) u_z\right)= -\frac{\dd}{\dd{z}}(\beta \Theta),&\quad (z,t)\in\Omega\times (0, T)\\
b(z,t) \Theta_t-\frac{\dd{}}{\dd{z}}\left(k(z,t)\Theta_z\right) - \Gamma(\Theta) u_{zt}^2+\beta \Theta u_{zt}=0,&\quad (z,t)\in\Omega\times (0, T)\\ 
p_1u_z +\Gamma u_{zt}
+ p_2  \phi_z^0=-p_2  \chi_z,&\quad\text{on }~\partial\Omega\times (0, T)\\
k\Theta_z =0,&\quad\text{on }~\partial\Omega\times (0, T)\\
u(t=0)=u_0,\quad u_t(t=0)=u_1,\quad \Theta(t=0)=\Theta_0,&\quad\text{in }~\Omega,
    \ear
\ee
where $\rho,\ \Gamma, \ p,\ b$ and $k$ are prescribed positive functions, $\beta$ is a positive real parameter and $u_0$, $u_1$ and $\Theta_0$ are given suitably regular, satisfying $u_0|_{\partial\Om}\in\mathbb{R}$, $u_{1}|_{\partial\Om}\in\mathbb{R}$ and $\Theta_0\ge 0$. The system under consideration couples the mechanical displacement $u(z,t)$, the electric potential $\phi(z,t)$, and the temperature $\Theta(z,t)$.

\subsection{Related Work}

\noindent The existence and well-posedness of piezoelectric and thermoelastic PDE systems have been the subject of extensive study in the literature. A number of studies have been conducted on one-dimensional systems that are closely related to the model under consideration in this paper.

\noindent For instance, \cite{Winklerweak1d} analyses a system with constant parameters $\rho=b=k=1$ and constant $p$, establishing the existence of global weak solutions. This corresponds closely to the results obtained in Theorem \ref{EX} of the present work.
Most prior work on thermoelastic waves (e.g. \cite{Winklerweak1d}, \cite{winkler2}, \cite{Fricke}, \cite{Meyer}) assumes constant $\rho$, $k$, and $b$. Our model allows space- and time-dependent coefficients, which more accurately represents realistic materials.
We therefore adapt the strategy of \cite{Winklerweak1d} to our circumstances, whereby the presence of coefficients that depend on both space and time requires several non-trivial modifications due to the increased generality and nonlinear dependence of the coefficients. First, space-time dependent coefficients require careful test-function arguments and complicate the limit passage in the approximate-system (Section \ref{sec5}, Lemma  \ref{lem_sqrtL2}.
\noindent There are more one-dimensional studies, which can be found in \cite{winkler2} and \cite{Fricke}. In these works, both $\Gamma$ and the initial data are assumed to be smooth enough. This allows us to analyze the problem more regularly and to show that there are global classical solutions. \cite{Fricke} also includes a connection to the electric field through a term $p=\Gamma+a$. This gives results that are similar to \cite{winkler2}. However, \cite{Fricke} requires $\Gamma$ to be positive, no boundedness condition is needed.
 Another closely related study is \cite{Meyer}, which assumes that $\rho$, $b$ and $k$ are constant parameters, and establishes existence of global classical solutions under the additional restriction that $\Gamma$ is bounded with $\Gamma''(\xi)\le 0$ for $\xi\ge 0$.

\noindent In \cite{mielke} , \cite{owc}, \cite{rossi}, \cite{rubi}, \cite{yosh} and \cite{zimmer}, higher-dimensional extensions were considered. In these cases, the analysis focused primarily on weakened concepts of solvability, consistent with the utilization of global weak solutions as employed in this study.
It is also noteworthy that global large-data solutions to purely thermoelastic systems, excluding Kelvin-Voigt damping, have been established in foundational studies. In particular, the system was examined on a bounded interval with Dirichlet boundary conditions for the displacement field $u$ and Neumann-type conditions for the temperature field $\Theta$ in \cite{Bies1}. This study established the existence and uniqueness of global classical solutions under suitable regularity assumptions on the initial data. Moreover, an analysis of the long-term behavior of three solutions was conducted in \cite{Bies2}, thereby demonstrating their propensity to stabilize towards equilibrium states.

\noindent Building on these analytical foundations, a separate line of research has focused on inverse and control problems for piezoelectric systems. 
These problems have been investigated extensively in the literature, including \cite{drz047}, \cite{978-3-030-56356-1}, \cite{RKDWAW}, \cite{19}, \cite{PDE_constrained}, \cite{18}, and \cite{14}, among others.
 In \cite{drz047}, an optimal boundary control problem for the electrical flux is studied, and existence and uniqueness are established for solutions of the undamped piezoelectric PDE and its adjoint equation, assuming material parameters in $L^\infty(\Omega)$.
 In \cite{14} boundary control problems are considered as well, providing analytical results for the undamped homogeneous piezoelectric system with an elasticity tensor in $C^2(\Omega)$, a permittivity tensor in $L^\infty(\Omega)$, and a constant piezoelectric coupling parameter.
 In \cite{978-3-030-56356-1}, the piezoelectric PDE is coupled with a parabolic temperature equation and an elliptic magnetic-field equation, analogous to the electrical equation of the classical piezoelectric model. The authors prove existence and uniqueness for systems with coefficients of regularity $C^{0,1}(\Omega)$ or $L^\infty(\Omega)$.
 In \cite{PDE_constrained} and \cite{19} the authors examine shape optimization problems and establish existence and uniqueness for solutions of the undamped inhomogeneous piezoelectric PDE with time- and space-constant parameters, along with the corresponding adjoint equations.

\noindent Finally, \cite{RKDWAW} proves well-posedness to a piezoelectric dynamical system
governing mechanical displacement and electrical potential.
with matrix-valued Sobolev–Bochner material parameters damping parameters and inhomogeneities.
This system of coupled hyperbolic-elliptic partial differential equations is further analyzed regarding higher order regularity results, including an a-priori energy estimate. The forward operator is shown to be well-defined and Fréchet differentiable, leading to the formulation of the inverse problem as a minimization problem. Finally,  weak lower semi-continuity, first-order optimality conditions, and the analysis of the adjoint system are discussed.

\subsection{Contribution}

\noindent  In Section \ref{sec:mod}, we transform the wave model into a parabolic system via the standard substitution $v:=u_t$.
It is then necessary to introduce a regularized system in which all variable coefficients are replaced by smooth approximations. This renders the model sufficiently regular to guarantee global classical solvability, which provides the starting point for the subsequent compactness arguments. Section \ref{sec4} establishes $\eps$-independent estimates via nonlinear integral inequalities that improve the temperature's $L^q$ integrability for $q\in(1,3)$ using Gagliardo-Nirenberg interpolation. This is essential for handling the nonlinearity in couplings.
While the weak solution property for candidates $(u,\Theta)$ arising from the limit process can be proven directly for the first equation, this is more difficult for the second equation. A central analytical difficulty lies in passing to the limit in the term $\sqrt{\Gamma_\eps(\Theta_\eps)}u_{\eps zt}$ in $L^2(\Om\times(0,\infty)).$
Our main argument for this is based on the sub-continuity of the $L^2$-norm, for which we exploit the already proven weak solution property in the first equation by choosing suitable test functions. 
\noindent Utilizing these ingredients, we are able to establish the limit in the approximate system and thereby derive a global weak solution to the original thermo-piezoelectric model, as outlined in Theorem \ref{EX}.

\noindent Utilizing the existence of weak solutions to the thermo–piezoelectric system introduced in Section \ref{sec:mod} yields surjectivity of the model operator $A$. 
Consequently, the second contribution of this study is analysis of the model operator $A$, demonstrating its boundedness and well-definedness. 
Since surface charges are usually modeled via boundary integrals of the derivative of the state, two possible observation operators $C$ and $C^\gamma$, for $\gamma>0$ suitably small, are introduced to approximate the surface charge and overcome well-posedness issues. 
For these observations operators boundedness and well-definedness results are presented.
Due to the two different observation operators, we investigate two different forward operators as well.  
Finally, two inverse problems of identifying the material parameters are modeled via the all-at-once approach and its forward operators are proven to be bounded, well-defined and continuous \frechet differentiable, which can be found in Section \ref{sec:OpAna}.

\section{Modeling}\label{sec:mod}

\noindent Suppose the electric excitation of a transversely isotropic piezoceramic along the polarization direction. In this setting we assume that the material covers the $x-y$ plane fully (infinitely) and has thickness $h$.
Hence, we only consider the thickness ($z-$) direction, meaning that we operate on the domain $\Omega=[0, h] \subset \R$. 
Furthermore, we incorporate the generation of heat by acoustic waves and mechanical losses according to the Kelvin-Voigt damping model.
Consequently, we take the interaction of the one-dimensional mechanical displacement $u(t,z)$, the one-dimensional electrical potential $\phi(t,z)$ and the one-dimensional temperature $\Theta(t,z)$ into account. 
These mechanical, electrical and thermal processes are coupled can be described by the following thermo-piezoelectric dynamical system 
\be{mixedDir}
\lball
    \rho u_{tt}- \frac{\dd{}}{\dd{z}}\left( c^E\left( u_z +\tau u_{zt}\right)
+ e \phi_z\right)=-\frac{\dd}{\dd{z}}(\beta \Theta) \quad~&\text{ in }~\Omega\times (0, T) \\[1mm]
-\frac{\dd{}}{\dd{z}}\left(eu_{z}-\varepsilon^S\phi_{z}\right)=0\quad~&\text{ in }~\Omega\times (0, T) \\[1mm]
c_{th}\rho \Theta_t-\frac{\dd{}}{\dd{z}}\left(k\Theta_z\right) - \tau c^E (u_{zt})^2+\beta \Theta u_{zt}=0\quad~&\text{ in }~\Omega\times (0, T) \\[1mm] 
\phi(z=0)=0,\quad\phi(z=h)=\phi^e\quad~&\text{ on }~ (0, T)\\[1mm] 
c^E\left(\tau u_{zt} +u_{z}\right)+e\phi_{z}  
-\beta\Theta=0\quad~&\text{ on }~\partial\Omega \times (0, T) \\[1mm]
k\Theta_z =0 \quad~&\text{ on }~\partial\Omega\times (0, T) \\[1mm]
u(t=0)=u_0\quad, u_t(t=0)=u_1\quad, \Theta(t=0)=\Theta_0 \quad&~\text{in }~\Omega,
	\ear
\ee

\noindent where $ T>0 $ is an arbitrary end time of the observed time period $(0,T)$. The mixed Dirichlet boundary conditions \eqref{mixedDir} describe the excitation behavior via the grounding on the bottom, i.e., at $0$,  and the electrical excitation on the top, i.e., at $h$, by the known signal $\phi^e \in H^1\left( 0, T\right) $. 
 Furthermore, 
$ \rho $ 
 is the mass density, 
 $ c_{\text{th}} $
is the heat capacity and $ k  $ is the thermal conductivity of the material. 
 The positive and bounded function $ \tau $ is the Kelvin-Voigt damping parameter and can be understood as relaxation and $\beta $ is a stress coefficient.
 The parameters describing
the material behavior are the elasticity parameter
$ c^E$,
the piezoelectric coupling parameter
$ e$,
and the permittivity parameter
$ \varepsilon^S $. Recent experiments have shown that $c^E$ behaves almost constantly for different temperatures, see \cite{KFH2W5a}. Consequently, we reformulate the material parameters as follows
\begin{equation}\label{param_p}
    c^E:=p_1, \quad e:=p_2(z,t),\quad \varepsilon^S:=p_3(z,t). 
\end{equation}
Furthermore, we denote $f(z,t)=(p_1,p_2(z,t),p_3(z,t))^T$.
Similarly to  \cite{RKDWAW}, we homogenize the mixed Dirichlet boundary conditions by using a Dirichlet lift Ansatz. 
 Therefore, we introduce the Dirichlet lift function 
$ \chi\in H^1((0,T); H^m(\Omega, \mathbb{R})), ~m\geq 2 $ with the property that  
\[\mathrm{Tr}(\chi(t))=\begin{cases}
	{\phi}_e(t) &~~\mathrm{at}~h\\
	0 & ~~\mathrm{at}~0
	\end{cases} ~\text{ a.e. in time}\] and can express $\phi$ as   $ \phi(t)=\phi^0(t)+\chi(t) $ a.e. in time, where $ {\phi}^0(t) \in H_{0}^1(\Omega, \mathbb{R})$.This motivates the following definitions
\begin{align}    \Gamma(z,t)&:=\tau(\Theta(z,t))p_1\label{Gam},\\ 
    b(z,t)&:=c_{th}(z,t)\rho(z,t)\label{bzt}.
\end{align}
For the subsequent analysis, the following assumptions are imposed.
\begin{assumptions}\label{vorrfunc}
    Let the following hold:
\be{}
\lball
\begin{array}{l}
\rho \in C^2([0,T];C(\bom, \R)) \text{ is positive,}\\
p_1\in \R^+,\\
p_i\in C^1(\bom\times [0,T]) \text{ for }i= 2,3\text{ are positive,}\\
\Gamma \in C^0([0,\infty))\text{ is positive and bounded,}\\
b\in C^1([0,T];C(\bom, \R))\text{ is positive,}\\
k \in C([0, T];C^1(\bom, \R))\text{ is positive and}\\
\chi\in H^1((0,T); H^m(\Omega, \mathbb{R})), ~m\geq 2\\
\beta \in \R^+.
\end{array}
\ear
\ee
\end{assumptions}
\noindent For the initial conditions, we assume
\be{initial}
u_0,u_1,\Theta_0\in L^1(\Om)\qquad \mbox{ with }\Theta_0\ge0.
\ee
This leads to
\be{GLIu1}
    \lball
    \rho u_{tt}-\frac{\dd{}}{\dd{z}} \left( p_1u_z +\Gamma(\Theta)u_{zt}
+ p_2  \phi_z^0\right)=\frac{\dd{}}{\dd{z}}(p_2  \chi_z) -\frac{\dd}{\dd{z}}(\beta \Theta)&\text{ in }~\Omega\times (0, T)\\[1mm]
-\frac{\dd{}}{\dd{z}}\left( p_2u_z- p_3  \phi_z^0\right) =-\frac{\dd{}}{\dd{z}} (p_3  \chi_z)&\text{ in }~\Omega\times (0, T)\\[1mm]
b \Theta_t-\frac{\dd{}}{\dd{z}}\left(k\Theta_z\right) - \Gamma(\Theta) u_{zt}^2+\beta \Theta u_{zt}=0&\text{ in }~\Omega\times (0, T)\\[1mm]
p_1u_z +\Gamma(\Theta)u_{zt}
+ p_2  \phi_z^0 
-\beta\Theta=-p_2  \chi_z &\text{ on }~\partial\Omega\times (0, T)\\[1mm]
k\Theta_z =0 &\text{ on }~\partial\Omega\times (0, T)\\[1mm]
u(t=0)=u_0,\quad u_t(t=0)=u_1,\quad \Theta(t=0)=\Theta_0 &\text{ in }~\Omega.
	\ear
\ee
We formulate the corresponding weak solution by
\begin{definition}\label{DefWeakFull}
   Let $\Omega\subset\mathbb{R}$ be a bounded open interval, and suppose that for $T=\infty$, Assumptions \ref{vorrfunc} hold and let $u_0,u_1,\Theta_0\in L^1(\Omega)$ with $\Theta_0\ge 0$. Then a global weak solution of the system above is a triple $(u,\Theta,\phi^0)$ of functions
    \bea{weakdef1full}
    \lball
    \phi^0 \in L^1_{loc}([0,\infty);W^{1,1}(\Om))\\
    u\in C^0([0,\infty);L^1(\Omega))\cap L_{loc}^1([0,\infty);W^{1,1}(\Omega))\quad\text{and}\\
    \Theta\in L_{loc}^1([0,\infty);W^{1,1}(\Omega))
    \ear
    \eea
    such that
    \bea{weakdef2full}
    u_t\in L_{loc}^2(\Omega\times[0,\infty))
    \eea
    as well as
    \bea{weakdef3full}
    \{\Gamma(\Theta) u_{zt},\ \Gamma(\Theta)u_{zt}^2,\ \Theta u_{zt}\}\subset L_{loc}^1(\Omega\times[0,\infty)),
    \eea
    and $u(\cdot,0)=u_0$ a.e. in $\Omega$, $\Theta\ge 0$ a.e. in $\Om\times(0,\infty)$ satisfying
\bea{weak1full}
     &\quad&\int_0^\infty\int_\Omega \rho u_t\varphi_t+\int_0^\infty\io \rho_t u_t\vp -\io \rho(\cdot,0) u_{0t}\vp(\cdot,0)\nn\\
     &=&\int_0^\infty \io \big(\Gamma(\Theta)u_{zt}+p_1u_z+p_2\phi^0_z+p_2\chi_z-\beta\Theta)\big)\varphi_z\hspace{-10mm}
\eea
as well as
\bea{weakelipfull}
\int_0^\infty \io (p_2u_z-p_3\phi^0_z)\psi_z = \int_0^\infty \io p_3 \chi_z \psi_z
\eea
and
\bea{weakthermofull}
    &\quad&\int_0^\infty \int_\Omega b_t\Theta \vp+\int_0^\infty \io b\Theta \varphi_t-\io b(\cdot,0) \Theta_0\vp(\cdot,0)\nn\\
    &=&\int_0^\infty\io k\Theta_z\varphi_z-\int_0^\infty \io \big(\beta\Theta u_{zt}+\Gamma(\Theta)u_{zt}^2\big)\vp
\eea
for all $\psi,\vp\in C^\infty_0(\bom\times[0,\infty))$.
\end{definition}
\noindent Now, consider the related system
\be{system1}
\lball
\rho u_{tt}-\frac{\dd{}}{\dd{z}} \left(\Gamma(\Theta) u_{zt}
+p u_z\right)= -\frac{\dd}{\dd{z}}(\beta \Theta)&\quad\text{in }~\Omega\times (0, T)\\
b \Theta_t-\frac{\dd{}}{\dd{z}}\left(k\Theta_z\right) - \Gamma(\Theta) u_{zt}^2+\beta \Theta u_{zt}=0&\quad\text{in }~\Omega\times (0, T)\\ 
\Gamma(\Theta) u_{zt}
+pu_z 
-\beta\Theta=0 &\quad\text{on }~\partial\Omega\times (0, T)\\
k\Theta_z =0 &\quad\text{on }~\partial\Omega\times (0, T)\\
u(t=0)=u_0,\quad u_t(t=0)=u_1,\quad \Theta(t=0)=\Theta_0 &\quad\text{in }~\Omega,
    \ear
\ee
where $p\in C^1(\bom\times[0,\infty))$ and $p>0$.
For the corresponding solution concept, we define in a similar way to the previous one:
 
\begin{definition}\label{DefWeak}
   Let $\Omega\subset\mathbb{R}$ be a bounded open interval, and suppose that for $T=\infty$, Assumptions \ref{vorrfunc} holds. Furthermore, let $p\in C^1(\bom\times(0,T))$ satisfy $p>0$ and let $u_0,u_1,\Theta_0\in L^1(\Omega)$ with $\Theta_0\ge 0$. Then a global weak solution of the system above is a pair $(u,\Theta)$ of functions
    \bea{weakdef1}
    \lball
    u\in C^0([0,\infty);L^1(\Omega))\cap L_{loc}^1([0,\infty);W^{1,1}(\Omega))\quad\text{and}\\
    \Theta\in L_{loc}^1([0,\infty);W^{1,1}(\Omega))
    \ear
    \eea
    such that
    \bea{weakdef2}
    u_t\in L_{loc}^2(\Omega\times[0,\infty))
    \eea
    as well as
    \bea{weakdef3}
    \{\Gamma(\Theta) u_{zt},\ \Gamma(\Theta)u_{zt}^2,\ \Theta u_{zt}\}\subset L_{loc}^1(\overline{\Omega}\times[0,\infty)),
    \eea
    and $u(\cdot,0)=u_0$ a.e. in $\Omega$, $\Theta\ge 0$ a.e. in $\Om\times(0,\infty)$ satisfying

\bea{weak1}
     &\quad&\int_0^\infty\int_\Omega \rho u_t\varphi_t+\int_0^\infty\io \rho_t u_t\vp -\io \rho(\cdot,0) u_{0t}\vp(\cdot,0)\nn\\
     &=&\int_0^\infty \io \big(\Gamma(\Theta)u_{zt}+p u_z-\beta\Theta)\big)\varphi_z\hspace{-10mm}
\eea
and
\bea{weak2}
    &\quad&\int_0^\infty \int_\Omega b_t\Theta \vp+\int_0^\infty \io b\Theta \varphi_t-\io b(\cdot,0) \Theta_0\vp(\cdot,0)\nn\\
    &=&\int_0^\infty\io k\Theta_z\varphi_z-\int_0^\infty \io \big(\beta\Theta u_{zt}+\Gamma(\Theta)u_{zt}^2\big)\vp
\eea
for all $\vp\in C^\infty_0(\bom\times[0,\infty))$.
\end{definition}
\noindent To ensure that the two solution concepts of the systems \eqref{GLIu1} and \eqref{system1} are indeed equivalent in our specific case, we will verify this in the following lemma:
\begin{lemma}\label{aequivalenzLK}
Let Assumptions \ref{vorrfunc} hold. Then, the triple $(u,\phi^0,\Theta)$ solves \eqref{GLIu1} in the sense of Definition \ref{DefWeakFull} if and only if the pair $(u,\Theta)$ is a solution to \eqref{system1} in the sense of Definition \ref{DefWeak} with $$p:=p_1+\frac{p_2^2}{p_3}\quad\mbox{ and }\quad \phi^0\in L^1_{loc}([0,\infty);W^{1,1}(\Om))$$
    satisfying  the relation
\be{aequi1}
p_3(z,t)\phi_z^0(z,t)=p_2(z,t) u_z(z,t)-p_3(z,t)\chi_z(z,t) \qquad\mbox{ a.e. in }\Om\times[0,\infty)
\ee
\end{lemma}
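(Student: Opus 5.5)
The plan is to exploit the fact that, in one space dimension, the weak elliptic equation \eqref{weakelipfull} determines the flux $p_2u_z-p_3\phi^0_z-p_3\chi_z$ only up to a function of time. One then shows that this function vanishes, so that \eqref{aequi1} holds, and finally substitutes \eqref{aequi1} into \eqref{weak1full} to obtain \eqref{weak1} with $p=p_1+p_2^2/p_3$. The thermal identities \eqref{weakthermofull} and \eqref{weak2} coincide verbatim, and so do the regularity requirements on $u$ and $\Theta$. Hence only the mechanical equation and the relation between $\phi^0$ and $u$ require attention.

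For the direction ``\eqref{GLIu1} $\Rightarrow$ \eqref{system1}'', set $g:=p_2u_z-p_3\phi^0_z-p_3\chi_z\in L^1_{loc}(\bom\times[0,\infty))$. By \eqref{weakelipfull}, $\int_0^\infty\io g\psi_z=0$ for all $\psi\in C_0^\infty(\bom\times[0,\infty))$. Choosing $\psi(z,t)=\eta(z)\zeta(t)$ with $\eta\in C_0^\infty(\Om)$ and $\zeta$ arbitrary shows that, for a.e. $t$, $g(\cdot,t)$ has vanishing distributional derivative. Hence $g(z,t)=c(t)$ for a.e. $(z,t)$.

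The main obstacle is to show $c\equiv0$, since the test class consists of functions in $\bom\times[0,\infty)$ that need not vanish at $\partial\Om$. Writing $\Om=(a_1,a_2)$, I would take $\psi(z,t)=(z-a_1)\zeta(t)$, multiplied by a smooth cutoff in $z$ equal to one on $\bom$ if needed. Then $\psi_z=\zeta$, and we obtain $\int_0^\infty |\Om| c(t)\zeta(t)\,dt=0$ for all $\zeta$, so $c=0$ a.e. This yields \eqref{aequi1}, i.e.
\begin{align*}
\phi^0_z=\frac{p_2}{p_3}u_z-\chi_z .
\end{align*}
(If one prefers to read the statement with the sign of $\chi_z$ as written in \eqref{aequi1}, this is the same computation.) The right-hand side of this identity lies in $L^1_{loc}$ because $p_2/p_3$ is continuous and positive on compacts. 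Inserting it into the flux in \eqref{weak1full} gives
\begin{align*}
p_1u_z+p_2\phi^0_z+p_2\chi_z=\Bigl(p_1+\frac{p_2^2}{p_3}\Bigr)u_z=pu_z .
\end{align*}
Thus \eqref{weak1full} becomes \eqref{weak1}. Moreover $p\in C^1$ and $p>0$ by Assumptions \ref{vorrfunc}, since $p_3>0$ is continuous.

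For the converse, given a solution $(u,\Theta)$ in the sense of Definition \ref{DefWeak} and $\phi^0$ with the stated regularity satisfying \eqref{aequi1}, the flux $g$ vanishes identically, so \eqref{weakelipfull} holds trivially. Reversing the algebraic substitution above turns \eqref{weak1} into \eqref{weak1full}. I would also note that $\phi^0\in L^1_{loc}([0,\infty);W^{1,1}(\Om))$ is consistent with the requirements on $u$, $\chi$ and the coefficients. Apart from the boundary-constant step, everything is routine bookkeeping.
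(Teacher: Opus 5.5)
Your proof is correct and follows essentially the paper's route. You show that $p_2u_z-p_3\phi^0_z-p_3\chi_z$ is spatially constant via product test functions, then kill the remaining function of time with a test function whose $z$-derivative has nonzero integral over $\Omega$ (the paper takes a generic $\eta$ with $\int_\Omega\eta_z=1$, you take $z-a_1$), and finally substitute into the mechanical equation; the only difference is in the converse, where the paper constructs $\phi^0$ by integrating $\frac{p_2}{p_3}u_z-\chi_z$ in $z$, whereas you take $\phi^0$ satisfying \eqref{aequi1} as given, which matches the literal statement, and your check that $p=p_1+p_2^2/p_3$ is $C^1$ and positive is something the paper leaves implicit.
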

\begin{proof}
    $\Rightarrow$: Let $(u,\phi^0,\Theta)$ be a weak solution of \eqref{GLIu1} in the sense of definition \ref{DefWeakFull}.
We set 
\be{aequi2}
F(z,t):= p_2(z,t) u_z(z,t) -p_3(z,t)\phi^0_z(z,t)-p_3(z,t)\chi(z,t).
\ee
Recalling our assumptions we have $F\in L^1_{loc}(\bom\times[0,\infty))$ and equation \eqref{weakelipfull} reformulates to
\be{aequi3}
\int_0^\infty \io F\psi_z dzdt =0 \qquad \mbox{ for all }\psi\in C^\infty_0(\bom\times[0,\infty)).
\ee
Choosing $\psi(z,t)=\eta(z)\nu(t)$ with $\eta\in C^\infty_0(\bom)$ and $\nu(t)\in C^\infty_0(0,\infty)$, we have
$$\io F(z,t)\eta_z(z) dz=0 \qquad \forall \eta\in C^\infty_0(\bom) \mbox{ and a.e. }t\in[0,\infty)$$
which implies $\partial_z F(z,t)=0$ in the distributional sense $\mathcal{D}'(\Om)$. Since $F(\cdot,t)\in L^1(\Om)$, for $t>0$, we have $F$ being spatial homogeneous
$$F(z,t)=C(t) \qquad \mbox{for a.e. }z\in\Om, \mbox{ and a.e. }t\in [0,\infty)$$
Going back to \eqref{aequi3} and choosing any function satisfying $\psi(z,t)=\eta(z)\nu(t)$ with $\nu>0$ and $\io \eta_z(z)=1$ we obtain
$$0=\int_0^\infty C(t) \nu(t) \io \eta_z(z)dzdt= \int_0^\infty C(t)\nu(t) dt$$
and thereby $C(t)=0$ for a.e. $t\in[0,\infty)$.
Since Assumptions \ref{vorrfunc} ensure $p_2/p_3\in L^\infty_{loc}(\Om\times[0,\infty))$ and $u_z,\chi_z\in L^1_{loc}([0,\infty);L^1(\Om))$, the substitution $$p_2\phi^0_z=\frac{p_2^2}{p_3}u_z -p_2\chi_z$$ is permissible in \eqref{weak1full} and for any $\vp\in C^\infty_0(\bom\times[0,\infty))$ we reformulate
\begin{align*}
&\quad\int_0^\infty\int_\Omega \rho u_t\varphi_t+\int_0^\infty\io \rho_t u_t\vp -\io \rho(\cdot,0) u_{0t}\vp(\cdot,0)\nn\\
     &=\int_0^\infty \io \big(\Gamma(\Theta)u_{zt}+p_1u_z+p_2\phi^0_z+p_2\chi_z-\beta\Theta)\big)\varphi_z\\
     &= \int_0^\infty \io \left(\Gamma(\Theta)u_{zt}+p_1u_z \frac{p_2^2}{p_3} u_z -\beta\Theta\right)\vp_z+\int_0^\infty \io \left(-p_2\chi_z+p_2\chi_z\right)\vp_z\\
     &=\int_0^\infty \io \left(\Gamma(\Theta)u_{zt}+pu_z-\beta\Theta\right)\vp_z\hspace{-10mm}.
\end{align*}
Hence we arrive at \eqref{weak1} and in combination with the fact that the temperature equation formulates exactly the same in both definitions, we conclude that $(u,\Theta)$ is a solution of \eqref{system1} in the sense of Definition \eqref{DefWeak}.\\
$\Leftarrow$: Let $(u,\Theta)$ be a weak solution of \eqref{system1} in the sense of Definition \eqref{DefWeak}. We define 
$$\phi_z^0:= \frac{p_2(z,t)}{p_3(z,t)} u_z(z,t) -\chi_z(z,t) \qquad\mbox{ a.e. in }\Om\times[0,\infty),$$
and set $\phi^0 := \int_a^z \phi_z^0 (s,t)ds$. Since Assumptions \ref{vorrfunc} ensure $p_2/p_3\in L^\infty_{loc}(\Om\times[0,\infty))$ and $u_z,\chi_z\in L^1_{loc}([0,\infty);L^1(\Om))$, we have $\phi_z^0\in L^1_{loc}([0,\infty);L^1(\Om))$ and thus $\phi^0\in L^1_{loc}([0,\infty);W^{1,1}(\Om))$. Our definition also implies
\be{aequi4}\int_0^\infty \io \left(p_2u_z-p_3\phi^0_z\right) \phi_z = \int_0^\infty \io p_3 \chi_z \psi_z \qquad\mbox{ for all }\psi\in C^\infty_0(\bom\times[0,\infty))\ee
and thereby verifies \eqref{weakelipfull}. Moreover, by adding $$\int_0^\infty (-p_2\chi_z+p_2\chi_z)\vp_z=0$$ on the right hand side of \eqref{weak1} and using \eqref{aequi4}, we obtain \eqref{weak1full}. Thus, the triple $(u,\phi^0,\Theta)$ is a weak solution of \eqref{GLIu1}.
\end{proof}

\section{Existence and regularity of solutions}\label{sec:ana}
\noindent This section is devoted to the statement of the global existence and regularity results. The proof is split into two parts, presented in Section \ref{sec4} and \ref{sec5}. In Section \ref{sec4}, we study an approximate system and prove global solvability, while Section \ref{sec5} is concerned with the passage of the limit.

\noindent The primary qualitative analytical finding of this study is encapsulated in the ensuing theorem.

\begin{theorem}\label{EX}
Let $\Omega\subset\mathbb{R}$ be a bounded open interval, and let Assumptions \ref{vorrfunc} holds. 
Then if
\bea{ab01}
c_\Gamma\le \Gamma(\xi)\le C_\Gamma\qquad\text{for all }\xi\ge 0
\eea
hold for some $c_\Gamma>0$ and $C_\Gamma>0$.
Then whenever
$$u_0\in W^{1,2}(\Omega),\quad u_{0t}\in L^2(\Omega)\quad \text{and}\quad \Theta_0\in L^2(\Omega)$$
are such that $u_0|_{\partial\Om}\in\mathbb{R}$, $u_{0t}|_{\partial\Om}\in\mathbb{R}$ and $\Theta_0\ge 0$ in $\bom$, then \eqref{sys0} admits a global weak solution $(u,\Theta)$ in the sense of Definition \ref{DefWeak}, which satisfy
\be{theo3.2.2}
    \lball
    u_t \in L^\infty_{loc}((0,\infty);L^2(\Om))\cap L^2_{loc}([0,\infty);W^{1,2}(\Om)),\\
    u\in C^0(\overline{ \Omega}\times[0,\infty))\cap L^\infty_{loc}((0,\infty); W^{1,2}(\Omega))\qquad and\\
    \Theta \in L^\infty_{loc}((0,\infty);L^1(\Om))\cap\bigcap_{q\in[1,3)} L^q_{loc}(\bom \times [0,\infty))\cap \bigcap_{r\in[1,\frac32)} L^r_{loc} ([0,\infty);W^{1,r}(\Omega)).
    \ear
\ee
\end{theorem}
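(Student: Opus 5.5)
We prove Theorem \ref{EX} for the reduced system \eqref{system1} with $p:=p_1+p_2^2/p_3$; Lemma \ref{aequivalenzLK} then returns the statement for \eqref{sys0}. Under Assumptions \ref{vorrfunc}, $p$ is $C^1$ and bounded above and below by positive constants on $\bom\times[0,T]$.

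\textbf{Regularization.} Following \cite{Winklerweak1d}, we substitute $v:=u_t$. We regularize all coefficients: $\rho_\eps,b_\eps,k_\eps,p_\eps$ are smooth, converge uniformly to $\rho,b,k,p$, and keep the same positive bounds. $\Gamma_\eps$ is smooth with $c_\Gamma\le\Gamma_\eps\le C_\Gamma$ and $\Gamma_\eps\to\Gamma$ locally uniformly. The initial data $(u_{0\eps},v_{0\eps},\Theta_{0\eps})$ are smooth, converge in $W^{1,2}\times L^2\times L^2$, and satisfy $\Theta_{0\eps}>0$.

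For the resulting parabolic system, local classical solvability follows from standard theory. The maximum principle applied to the $\Theta$-equation, rewritten as $b\Theta_t-(k\Theta_z)_z+\beta\Theta v_z=\Gamma v_z^2\ge 0$, gives $\Theta_\eps>0$. Global extension then follows from the $\eps$-dependent bounds below, combined with standard bootstrap arguments in one space dimension.

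\textbf{Uniform estimates.}
\begin{itemize}
\item \emph{Energy--entropy.} Test the first equation by $v_\eps$ and integrate the second. The dissipative terms $\Gamma v_z^2$ cancel, and the $\beta\Theta v_z$ terms cancel between the two equations. The time derivatives of the coefficients only produce terms $\rho_t v^2$, $p_t u_z^2$ and $b_t\Theta$. Gronwall then bounds
\[
\io\rho v_\eps^2+\io p u_{\eps z}^2+\io b\Theta_\eps
\]
on $(0,T)$, giving $L^\infty_{loc}$ control of $v$ in $L^2$, of $u_z$ in $L^2$ and of $\Theta$ in $L^1$.
\item \emph{Gradient of $v$.} Next, test the first equation by $v_\eps$ alone and use the bound $\Gamma\ge c_\Gamma$. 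This yields $\int_0^T\io v_{\eps z}^2\le C+C\int_0^T\io\Theta_\eps^2$.
\item \emph{Closing the loop.} Test the $\Theta$-equation by $\Theta_\eps^{q-2}$ (or $(\Theta_\eps+1)^{-\alpha}$) to obtain $\int_0^T\io\Theta^{q-3}\Theta_z^2$ bounds for $q\in(1,2)$. Gagliardo--Nirenberg interpolation with the $L^1$ bound then gives $\Theta\in L^q(\Omega\times(0,T))$ for $q<3$, and $\Theta_z\in L^r$ for $r<3/2$. To handle the forcing $v_z^2$ against $\Theta^{q-2}$, use the bound $\Theta^{q-2}\le$ const when $q\le 2$. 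To handle $\Theta v_z$, use the gradient bound above together with an $L^2$ estimate of $\Theta$, which is closed via a nonlinear integral (Bihari-type) inequality.
\item \emph{Time derivatives.} Bounds on $u_{\eps tt}$ and $\Theta_{\eps t}$ in duals of $W^{1,2}$, respectively $W^{1,\infty}$, follow from the equations.
\end{itemize}

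\textbf{Compactness and limit passage.} Aubin--Lions gives, along a subsequence:
\begin{itemize}
\item $v_\eps\to v$ strongly in $L^2$ and weakly in $L^2(W^{1,2})$;
\item $u_\eps\to u$ in $C^0(\bom\times[0,T])$, using the $W^{1,2}$ bound in one dimension;
\item $\Theta_\eps\to\Theta$ a.e. and in $L^q$ for $q<3$.
\end{itemize}
These suffice to pass to the limit in \eqref{weak1}: there $\Gamma_\eps(\Theta_\eps)\to\Gamma(\Theta)$ a.e. and boundedly, so $\Gamma_\eps(\Theta_\eps)u_{\eps zt}\rightharpoonup\Gamma(\Theta)u_{zt}$.

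\textbf{Main obstacle: the term $\Gamma v_z^2$ in \eqref{weak2}.} Weak convergence of $\sqrt{\Gamma_\eps(\Theta_\eps)}v_{\eps z}$ is not enough here; we need strong $L^2_{loc}$ convergence. The plan is to show
\[
\limsup_{\eps\to 0}\int_0^T\io\Gamma_\eps(\Theta_\eps)v_{\eps z}^2\zeta\le\int_0^T\io\Gamma(\Theta)v_z^2\zeta
\]
for cutoffs $\zeta\ge 0$. To obtain this, compute the left side via the $\eps$-energy identity tested with $v_\eps\zeta$. Then compare with the same identity for the limit, obtained by testing the already established \eqref{weak1} with $v\zeta$. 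This test function is admissible after a Steklov time-averaging argument, since $v\in L^2(W^{1,2})$ and $v_t\in L^2((W^{1,2})^*)$. The remaining terms converge by strong convergence of $v_\eps$, $u_\eps$ and $\Theta_\eps$, together with lower semicontinuity of $\io\rho v^2(T)$. Weak lower semicontinuity of the norm then upgrades the weak convergence to strong convergence, and \eqref{weak2} follows.

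Finally, the regularity claims in \eqref{theo3.2.2} are inherited from the uniform bounds by lower semicontinuity.
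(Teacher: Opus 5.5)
Your uniform estimates and the compactness step are essentially sound. Your Bihari-type closing of the $L^2$ bound for $\Theta$ is a detour compared with absorbing $\beta\Theta v_z$ into the good term $\io\Gamma(\Theta+1)^{p-1}v_z^2$, but it does close, because the interpolation exponent is sublinear. There are, however, two genuine gaps.

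\textbf{The energy comparison for $\Gamma(\Theta)u_{zt}^2$.} When you test the $\eps$-problem with $v_\eps\zeta$, the coupling term $\int_0^\infty\io\zeta p_\eps u_{\eps z}v_{\eps z}$ is a product of two merely weakly convergent sequences. After writing $v_{\eps z}=u_{\eps zt}$ it becomes, up to the initial term, $-\frac12\int_0^\infty\io(\zeta' p_\eps+\zeta p_{\eps t})u_{\eps z}^2$.

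You claim the remaining terms converge by strong convergence of $v_\eps,u_\eps,\Theta_\eps$. But $u_\eps$ converges strongly only in $C^0$. For $u_{\eps z}$ your bounds give no spatial compactness, only weak $L^2$ convergence. Since $p=p_1+p_2^2/p_3$ depends on $t$, the coefficient $\zeta'p+\zeta p_t$ has no sign for a general cutoff $\zeta\ge0$. For a sharp cutoff at time $T$ the bulk coefficient is just $p_t$, and lower semicontinuity of $\io\rho v^2(T)$ or $\io p u_z^2(T)$ does not touch it. So the $\limsup$ of this term is not controlled by its value at $u_z$, and $\limsup_\eps\int\!\!\int\zeta\Gamma_\eps(\Theta_\eps)v_{\eps z}^2\le\int\!\!\int\zeta\Gamma(\Theta)v_z^2$ does not follow.

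This is exactly the point the paper must handle beyond the constant-coefficient case. It takes $\xi(t)=e^{-\kappa t}\Psi(t)$ with $\Psi(0)=1$, $\Psi'\le 0$ and $\kappa$ exceeding $\sup p_t/p$ and $\sup\rho_t/\rho$. Then $\xi'p_\eps+\xi p_{\eps t}\le0$ and $\xi'\rho_\eps+\xi\rho_{\eps t}\le 0$. Consequently $I_{2,\eps}=-\frac12\int\!\!\int\bigl[(\xi'p_\eps+\xi p_{\eps t})u_{\eps z}^2+(\xi'\rho_\eps+\xi\rho_{\eps t})v_\eps^2\bigr]$ is a nonnegative weighted quadratic form, hence weakly lower semicontinuous. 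The three facts $\liminf I_{1,\eps}\ge I_1$, $\liminf I_{2,\eps}\ge I_2$ and $\limsup(I_{1,\eps}+I_{2,\eps})\le I_1+I_2$ (the last from the Steklov inequality for the limit) then force $I_{1,\eps}\to I_1$. With such a weight your argument goes through. A Gronwall argument on the defect $\limsup_\eps\io p(u_{\eps z}-u_z)^2(\cdot,t)$ would be an equivalent repair.

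\textbf{Global solvability of the approximate problems.} You only smooth coefficients and data, so each $\eps$-problem is the full Kelvin--Voigt thermoviscoelastic system with smooth variable coefficients. The uniform bounds you list ($v_\eps\in L^\infty L^2\cap L^2W^{1,2}$, $\Theta_\eps\in L^q$ for $q<3$) do not permit a "standard bootstrap" to the classical regularity required by the extensibility criterion. Already a $v_{zz}$-type estimate meets $\io\Gamma_\eps'(\Theta_\eps)\Theta_{\eps z}v_{\eps z}v_{\eps zz}$, which these bounds do not control. Global classical solvability of such systems is the subject of separate works, which need additional assumptions such as constant $\rho,b,k$ and extra structure on $\Gamma$.

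The paper circumvents this by adding $-\eps v_{\eps zzzz}$ to the first equation and $\eps u_{\eps zz}$ to the second. The extra dissipation $\eps\int_0^T\io v_{\eps zz}^2\le C$ yields $\int_0^T\io|v_{\eps z}|^3\le C(\eps)$, hence an $L^\infty_tL^2_z$ bound for $\Theta_\eps$ on $(0,\tme)$. From there a chain of $\eps$-dependent higher-order bounds rules out blow-up.

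A minor point: $\Theta^{q-2}\le$ const is false for $q<2$. This does not matter, since in the sublinear test the term $\io\Gamma\Theta^{q-2}v_z^2$ carries a favorable sign and needs no bound.
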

\subsection*{Remark}
i) In this work, we assume that the coefficients $\rho$, $b$, $k$, $p$ and $\Gamma$ are bounded and positive. In contrast, studies such as \cite{winkler3} and \cite{Fricke} do not impose boundedness, and in particular $\Gamma$ is allowed to be unbounded. Assuming bounded coefficients is reasonable from a physical perspective, as it corresponds to realistic material parameters and ensures that the energy of the system remains controlled. 
Furthermore, this approach is standard in numerous works on related systems (\cite{Meyer},\cite{winkler2},\cite{Winklerweak1d},\cite{WinlkerMultiThermo}).\\
\noindent ii) Whilst a considerable number of preceding studies have taken into account constant coefficients, such as $\rho$ and $b$ (frequently normalized to $1$), our approach of incorporating spatially and temporally varying $\rho$ and $k$ introduces further complexities. In particular, reliance on the Neumann heat semigroup to establish regularity properties is no longer a viable option. This necessitates a more sophisticated approach involving higher-order Sobolev estimates and energy bounds in the analysis.

\section{Approximate System and a priori estimates}\label{sec4}

\subsection{Preliminaries}
Before deriving the first auxiliary results, we transform the wave model into a parabolic setting using the substitution $v:=u_t$ and regularize the original system using an approximate system which converges to the original thermo-piezoelectric system as the approximation parameter $\varepsilon \searrow 0$. For this, let $(v_{0\eps})_{\eps\in(0,1)}\subset C^\infty(\Om)$, $(u_{0\eps})_{\eps\in(0,1)}\subset C^\infty(\Om)$ and $(\Theta_{0\eps})_{\eps\in(0,1)}\subset C^\infty(\overline{\Om})$ be such that $\Theta_{0\eps}\ge 0$ in $\Om$ for all $\eps\in(0,1)$, and that as $\eps\searrow 0$ we have
\be{initkonv}
	\lball
    v_{0\eps}\to u_{0t}\quad & \text{in }L^2(\Om), \\[1mm]
    u_{0\eps}\to u_0\quad & \text{in }W^{1,2}(\Om)\quad\text{and}\\[1mm]
    \Theta_{0\eps}\to\Theta_0\quad & \text{in }L^1(\Om).
	\ear
\ee
We moreover let Assumptions \ref{vorrfunc} holds and $0<p\in C^1(\bom\times[0,\infty))$. In addition let $(\rho_\eps)_{\eps\in(0,1)},$ $ (p_{\eps})_{\eps\in(0,1)},$ $(b_\eps)_{\eps\in(0,1)}$ and $(k_\eps)_{\eps\in(0,1)}$ be positive and in $C^\infty(\overline{\Omega}\times[0,\infty))$, and $(\Geps)_{\eps\in(0,1)}$ be in $C^\infty([0,\infty))$ such that
\bea{konvabs1}
\Geps&\to&\Gamma \quad\text{in }{L^\infty_{loc}([0,\infty))},\label{KGZ_Gamma}\\
\peps &\to& \rho \quad\text{in }{ C_{loc}^2([0,\infty);C^0(\bom,\R)),}\label{KGZ_Rho}\\
p_\eps &\to& p \quad\text{in }{ L^\infty_{loc}(\bom\times[0,\infty))},\label{KGZ_P}\\
p_{\eps t} &\to& p_t \quad \text{in }{L^\infty_{loc}(\bom\times[0,\infty))},\label{KGZ_Pt}\\
p_{\eps z} &\to& p_z \quad \text{in } L^\infty_{loc}(\bom\times[0,\infty)), \label{KGZ_Pz}\\
\keps &\to& k \quad \text{ in }{ L^\infty_{loc}(\bom \times [0,\infty))},\label{KGZ_K}\\
b_\eps &\to& b \quad \text{ in } {C_{loc}^1([0,\infty);C^0(\bom,\R))}\label{KGZ_B}
\eea
as $\eps\searrow0$, as well as
\bea{ab1}
c_\Gamma\leq \Geps(\xi) \leq C_\Gamma\qquad\text{for all }\xi\ge 0\text{ and }\eps\in(0,1),
\eea
and
\bea{ab2}
\rho(z,t)\leq \peps(z,t) \leq \rho(z,t)+1,
\eea
\bea{ab5.1}
p(z,t)\le p_{\eps}(z,t)\le p(z,t)+1,
\eea
\bea{ab3}
b(z,t)\leq \beps(z,t) \leq b(z,t)+1,
\eea
and
\bea{ab4}
k(z,t)\leq \keps(z,t)\leq k(z,t)+1
\eea
for all $(z,t)\in\Omega\times(0,\infty)$ and $\eps\in(0,1)$.
 Additionally, it is also necessary to impose the conditions that
\bea{ab6}
b_{\eps z}=0\qquad\text{and}\qquad k_{\eps z}=0,\quad \text{for }z\in\partial\Om,\ t>0\text{ and }\eps\in(0,1),
\eea
as well as
\bea{ab7}
p_{\eps z}=0,\quad\text{for }z\in\partial\Om,\ t>0\text{ and }\eps\in(0,1).
\eea
For $\eps\in(0,1)$ we consider

\be{approx}
	\lball
    \rho_\varepsilon v_{\varepsilon t} = -\eps v_{\varepsilon zzzz}+(\Gamma_\varepsilon(\Theta_\varepsilon) v_{\varepsilon z})_z+(p_{\varepsilon} u_{\varepsilon z})_z -\beta\Theta_{\eps z}, 
	\qquad & z\in\Omega, \ t>0, \\[1mm]
    u_{\varepsilon t}\hspace{3mm} = \varepsilon u_{\varepsilon zz}+v_\varepsilon, 
	\qquad & z\in\Omega, \ t>0, \\[1mm]
    b_\varepsilon \Theta_{\varepsilon t}\hspace{0.5mm} = (k_\varepsilon\Theta_{\varepsilon z})_z + \Gamma_\varepsilon(\Theta_\varepsilon) v_{\varepsilon z}^2 - \beta \Theta_\varepsilon v_{\varepsilon z} , 
	\qquad & z\in\Omega, \ t>0, \\[1mm]
    v_{\eps}=v_{\eps zz}=0,\quad u_\eps=0,\quad \Theta_{\eps z}=0, & z\in\partial\Omega,\ t>0, \\[1mm]
    v_\eps(z,0)=v_{0\eps}(z),\quad u_\eps(z,0)=u_{0\eps}(z),\quad\Theta_{\eps}(z,0)=\Theta_{0\eps}(z), & z\in\Om.
	\ear
\ee
\begin{remark}\label{RemarkShift}
The original system \eqref{system1} implies $u_{tt}=0$ on $\partial\Om$ for all $t>0$, so $u_t|_{\partial\Om}$ is constant in time and determined by $u_{0t}|_{\partial\Om}$. Thus we impose homogeneous Dirichlet boundary conditions for $v_\eps$ and $u_\eps$ in \eqref{approx}. Without loss of generality, $v_{0\eps}|_{\partial\Om}=0$, as otherwise, we replace $v_\eps$ by $v_\eps-c_\eps$ and $u_\eps$ by $u_\eps-c_\eps t$, with $c_\eps:=v_{0\eps}|_{\partial\Om}$, preserving the structure of \eqref{approx}. The condition $v_{\eps zz}=0$ on $\partial\Om$ is the natural second boundary condition associated with the fourth-order term $-\eps v_{\eps zzzz}$ under the Dirichlet conditions.
\end{remark}
\noindent Indeed, all of these problems are accessible by the conventional theory of local solvability in parabolic systems: 
\begin{lemma}\label{2.1Ref}
Let $\varepsilon\in(0,1)$. Then there exist $T_{max,\varepsilon}\in(0,\infty]$ and functions
\be{ae1}
    \lball
    v_\varepsilon\in C^0(\overline{ \Omega}\times[0,T_{max,\varepsilon})\cap C^{4,1}(\overline{ \Omega}\times(0,T_{max,\varepsilon})),\\
    u_\varepsilon\in C^0(\overline{ \Omega}\times[0,T_{max,\varepsilon})\cap C^{2,1}(\overline{ \Omega}\times(0,T_{max,\varepsilon}))\cap C^0([0,T_{max,\varepsilon}); W_0^{1,2}(\Omega))\qquad and\\
    \Theta_\varepsilon\in C^0(\overline{ \Omega}\times[0,T_{max,\varepsilon})\cap C^{2,1}(\overline{ \Omega}\times(0,T_{max,\varepsilon}))
    \ear
\ee
such that $\Theta_\varepsilon\ge 0$ in $\overline{\Omega}\times[0,T_{max,\varepsilon})$, that \eqref{approx} is solved in the classical sense in $\Omega\times(0,T_{max,\varepsilon})$, and that
\bea{ae2}
    if\ T_{max,\varepsilon}<\infty,\quad then\hspace{7cm}\\
    \limsup_{t\nearrow T_{max,\varepsilon}}\Big\{\|v_\varepsilon(\cdot,t)\|_{W^{2+2\eta,\infty}(\Omega)}+\|u_\varepsilon(\cdot,t)\|_{W^{1+\eta,\infty}(\Omega)}+\|\Theta_\varepsilon(\cdot,t)\|_{W^{1+\eta,\infty}(\Omega)}\Big\}=\infty\ for\ all\ \eta>0.\nn
\eea
\end{lemma}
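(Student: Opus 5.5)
The plan is the standard route for local classical solvability of parabolic systems. We write \eqref{approx} as a semilinear parabolic system for $(v_\eps,u_\eps,\Theta_\eps)$ and run a Banach fixed point argument. After that we extend to a maximal solution, derive the extensibility criterion \eqref{ae2}, and show nonnegativity of $\Theta_\eps$ by a comparison argument.

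\textbf{Fixed point.} Fix $\eps\in(0,1)$ and $\eta\in(0,1)$, and set $X:=W^{2+2\eta,\infty}(\Om)\times W^{1+\eta,\infty}(\Om)\times W^{1+\eta,\infty}(\Om)$. For $T\in(0,1)$ and $R>0$ larger than the norm of the smooth initial data, let $S_T$ be the closed ball of radius $R$ in $C^0([0,T];X)$. Given $(\bar v,\bar u,\bar\Theta)\in S_T$, we define $\Phi(\bar v,\bar u,\bar\Theta):=(v,u,\Theta)$ by solving three linear problems.
\begin{itemize}
\item For $v$: the equation $\rho_\eps v_t=-\eps v_{zzzz}+(\Geps(\bar\Theta)\bar v_z)_z+(p_\eps\bar u_z)_z-\beta\bar\Theta_z$ with $v=v_{zz}=0$ on $\partial\Om$. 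We divide by $\rho_\eps$, which is smooth and bounded below, so the principal part is $-\eps\rho_\eps^{-1}\partial_z^4$ and is uniformly parabolic. The lower-order terms involve at most second derivatives of $\bar v$ and first derivatives of $\bar u,\bar\Theta$, so they lie in $L^\infty$.
\item For $u$: the heat equation $u_t=\eps u_{zz}+\bar v$ with homogeneous Dirichlet data.
\item For $\Theta$: the equation $\beps\Theta_t=(\keps\Theta_z)_z+\Geps(\bar\Theta)\bar v_z^2-\beta\bar\Theta\bar v_z$ with Neumann data. The source term is bounded.
\end{itemize}
Parabolic smoothing gives the required regularity. For the fourth-order part we use the analytic semigroup generated by $-\eps\rho_\eps^{-1}\partial_z^4$ with these boundary conditions, together with fractional power estimates. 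For the second-order parts we use Neumann and Dirichlet heat semigroup estimates of the form $\|e^{tA}\|_{L^\infty\to W^{1+\eta,\infty}}\le Ct^{-(1+\eta)/2}$, and the analogous $t^{-(2+2\eta)/4}$ rate for the fourth-order operator. Both singularities are integrable in time because $\eta<1$. Hence $\Phi$ maps $S_T$ into itself and is a contraction once $T$ is small, using that $\Geps$ is smooth, hence locally Lipschitz. The time dependence of the coefficients is handled by the standard theory for nonautonomous evolution operators, such as Amann's or Friedman's. The resulting fixed point is then bootstrapped with parabolic Schauder theory to the classical regularity in \eqref{ae1}. The compatibility issue at $t=0$ only affects continuity at $t=0$, which the semigroup framework already provides. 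The claim $u_\eps\in C^0([0,\tme);W_0^{1,2})$ follows from the Dirichlet heat semigroup.

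\textbf{Maximal extension and criterion.} The existence time $T$ depends only on an upper bound for $\|(v,u,\Theta)(t_0)\|_X$. So we take $\tme$ as the supremum of existence times and restart the fixed point argument from times near $\tme$. This gives \eqref{ae2} for each $\eta$, and $\eta$ can be taken arbitrarily small since smaller $\eta$ only weakens the norms.

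\textbf{Nonnegativity.} Regard the $\Theta_\eps$-equation as linear in $\Theta_\eps$:
\[
\beps\Theta_{\eps t}-(\keps\Theta_{\eps z})_z+\beta v_{\eps z}\Theta_\eps=\Geps(\Theta_\eps)v_{\eps z}^2\ge0,
\]
with a bounded zeroth-order coefficient $\beta v_{\eps z}$ on $[0,T]$ for each $T<\tme$. Since $\Theta_{0\eps}\ge0$, the parabolic comparison principle with Neumann boundary conditions gives $\Theta_\eps\ge0$. To make this explicit, multiply by $e^{-\lambda t}$ with $\lambda$ large and consider the negative part.

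The main obstacle is the fourth-order $v$-equation. It has a variable, time-dependent leading coefficient $\eps/\rho_\eps$, and we need $W^{2+2\eta,\infty}$ smoothing, which requires the analytic semigroup and evolution operator theory. This is simplest if the equation is first rewritten in a form where the boundary conditions $v=v_{zz}=0$ make $\partial_z^4$ the square of the Dirichlet Laplacian.
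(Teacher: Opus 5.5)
Your route is different from the paper's. The paper does not run a fixed point. It reads off local existence and the extensibility criterion \eqref{ae2} from Amann's general quasilinear theory (Theorems 12.1 and 12.5 of his 1993 survey), and it gets $\Theta_\eps\ge0$ from the strong maximum principle as in Winkler's Lemma~2.1. Your explicit contraction in $X=W^{2+2\eta,\infty}\times W^{1+\eta,\infty}\times W^{1+\eta,\infty}$ has two merits: it shows where exactly the norms in \eqref{ae2} come from, and it treats the mixed fourth/second-order structure by hand. The price is that you need nonautonomous smoothing estimates for the fourth-order Navier problem, which the citation hides. Your negative-part Gronwall argument is a fine substitute for the maximum principle, provided you extend $\Geps$ to a positive smooth function on $\mathbb R$. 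You need that extension for the fixed point anyway, since the iterates $\bar\Theta$ need not be nonnegative.

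One step fails as written. The term $(p_\eps\bar u_z)_z$ is \emph{not} in $L^\infty$ for $\bar u\in W^{1+\eta,\infty}(\Om)$, because it contains $\bar u_{zz}$. So your $L^\infty\to W^{2+2\eta,\infty}$ bound with rate $t^{-(2+2\eta)/4}$ does not cover it. You cannot avoid this by putting $u$ into a stronger space. The criterion \eqref{ae2} has to involve only $\|u_\eps\|_{W^{1+\eta,\infty}}$, and that is exactly what the paper uses later when it refutes \eqref{ae2} via $W^{2,2}(\Om)\hookrightarrow W^{\sigma,\infty}(\Om)$ with $\sigma<\frac32$.

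The repair is to keep the term in divergence form, $\rho_\eps^{-1}(p_\eps\bar u_z)_z=(\rho_\eps^{-1}p_\eps\bar u_z)_z+\rho_\eps^{-2}\rho_{\eps z}p_\eps\bar u_z$. For the fourth-order part, $\partial_z g$ with $g\in L^\infty$ is then mapped into $W^{2+2\eta,\infty}$ with rate $t^{-(3+2\eta)/4}$. No boundary terms arise, because the Navier kernel vanishes on $\partial\Om$ in its second variable. This rate is integrable for $\eta<\frac12$, which suffices because the criterion only has to be proved for small $\eta$.

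Separately, your remark that compatibility at $t=0$ ``only affects continuity'' is inaccurate. If, say, $\Theta_{0\eps z}\neq0$ on $\partial\Om$, the Neumann boundary layer makes $\|\Theta(\cdot,t)\|_{W^{1+\eta,\infty}}\to\infty$ as $t\searrow0$. Then the free evolution does not even lie in your ball $S_T$, so the first step needs compatible initial data. The paper tacitly assumes this too, when it claims $C^\infty$-regularity up to $t=0$ in the subsequent lemma. Restarts at $t_0>0$ are unaffected, so your extension argument for \eqref{ae2} stands.
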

\begin{proof}
    The local existence of such solutions and \eqref{ae2} can be derived in a straightforward manner from the general results concerning existence and extensibility from \cite[Theorem~12.1]{Amann1993} and \cite[Theorem~12.5]{Amann1993}, while the nonnegativity of $\Theta_\eps$ follows by application of the strong maximum principle (see, for instance, \cite[Lemma~2.1]{Winklerweak1d}). For further information pertaining to associated issues, please refer to \cite[Lemma~2.3]{ClaesLankeitWinkler} and \cite{WinlkerMultiThermo}.
\end{proof}
\noindent It is worth mentioning that in similar problems, when the initial data satisfies $\Theta_{0\eps}> 0$ in $\Om$, one can still conclude $\Theta_\eps> 0$ in $\overline{\Om}\times[0,\tme)$ (see \cite{WinlkerMultiThermo}). In this case however, it should be noted that strict positivity cannot simply be taken for granted.

\noindent Prior to embarking the qualitative analysis of the system and the exploitation of suitable energy inequalities, it is first observed that the functions obtained in the aforementioned lemma actually possess higher regularity.
\begin{lemma}
Let $\varepsilon\in(0,1)$, then
\be{ae1infty}
    \lball
    v_\varepsilon\in C^\infty(\overline{ \Omega}\times[0,T_{max,\varepsilon}),\\
    u_\varepsilon\in C^\infty(\overline{ \Omega}\times[0,T_{max,\varepsilon})\qquad and\\
    \Theta_\varepsilon\in C^\infty(\overline{ \Omega}\times[0,T_{max,\varepsilon}).
    \ear
\ee
\end{lemma}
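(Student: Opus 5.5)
We plan to obtain the claimed smoothness by a bootstrap argument built on the classical regularity of Lemma \ref{2.1Ref}. Throughout, the coefficient data $\rho_\eps$, $p_\eps$, $b_\eps$, $k_\eps$ and $\Geps$ are smooth by construction, the initial data are smooth, and the parabolic structure of \eqref{approx} (fourth-order in $v_\eps$, second-order in $u_\eps$ and $\Theta_\eps$) admits standard Schauder theory. A literal statement of smoothness up to $t=0$ would require compatibility conditions on $(v_{0\eps},u_{0\eps},\Theta_{0\eps})$ at $\partial\Om$. We therefore either assume the approximating initial data are chosen to satisfy these conditions to all orders (for instance, compactly supported in $\Om$ in the case of $v_{0\eps}$ and $u_{0\eps}$, and with vanishing odd derivatives at $\partial\Om$ in the case of $\Theta_{0\eps}$), or interpret the claim on $\bom\times(0,\tme)$. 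We organize the proof on compact subintervals $[\tau,T]\subset(0,\tme)$ and then handle $t$ near $0$ using these compatibility conditions.

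\textbf{Step 1: initial H\"older regularity.} By Lemma \ref{2.1Ref}, $v_\eps\in C^{4,1}$ and $u_\eps,\Theta_\eps\in C^{2,1}$ on $\bom\times[\tau,T]$. Hence the lower-order terms are continuous, and by difference-quotient or interpolation arguments they are H\"older continuous. In the $v$-equation, written as $v_{\eps t}=\rho_\eps^{-1}\big(-\eps v_{\eps zzzz}+\Geps(\Theta_\eps)v_{\eps zz}+\dots\big)$, these lower-order terms are $\Geps'(\Theta_\eps)\Theta_{\eps z}v_{\eps z}$, $(p_\eps u_{\eps z})_z$ and $\beta\Theta_{\eps z}$. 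In the $u$-equation the inhomogeneity is $v_\eps$, and in the $\Theta$-equation it is $\Geps(\Theta_\eps)v_{\eps z}^2-\beta\Theta_\eps v_{\eps z}$, with smooth leading coefficient $k_\eps/b_\eps$ and drift $k_{\eps z}/b_\eps$.

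\textbf{Step 2: bootstrap.} We treat each equation as a linear parabolic problem with H\"older coefficients and right-hand side, and apply interior-in-time Schauder estimates up to the boundary (cf.\ \cite{Amann1993} or Ladyzhenskaya--Solonnikov--Ural'tseva). Since the boundary conditions $v_\eps=v_{\eps zz}=0$, $u_\eps=0$ and $\Theta_{\eps z}=0$ are compatible with the operators, we gain regularity: if $(v_\eps,u_\eps,\Theta_\eps)\in C^{4+\alpha+m,\cdot}\times C^{2+\alpha+m,\cdot}\times C^{2+\alpha+m,\cdot}$, then the right-hand sides gain one order and the solutions gain one more. For time derivatives, we differentiate the system in $t$, which is legitimate because the coefficients are $C^\infty$ in $t$, and apply the same estimates to $(v_{\eps t},u_{\eps t},\Theta_{\eps t})$, which solve a linear system of the same type. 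Conditions such as \eqref{ab6} and \eqref{ab7} help keep the boundary conditions homogeneous after differentiation. Induction in the order of regularity then yields $C^\infty(\bom\times(0,\tme))$.

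\textbf{Main obstacle.} The delicate point is the coupling of orders across the three equations. The $v$-equation needs $u_{\eps zz}$ and $\Theta_{\eps z}$, while the $\Theta$-equation needs $v_{\eps z}^2$. The induction must therefore be arranged so that all three components improve simultaneously: at each stage we first upgrade $u_\eps$ and $\Theta_\eps$ using the current regularity of $v_\eps$, and then upgrade $v_\eps$. A second difficulty is the verification of higher-order compatibility conditions if smoothness is claimed at $t=0$; here we will rely on the freedom to choose $v_{0\eps},u_{0\eps},\Theta_{0\eps}$ appropriately.
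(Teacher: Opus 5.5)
Your route is the paper's: it also runs a parabolic Schauder bootstrap via \cite[Ch.~IV, Theorem~5.3]{LSU}, alternately upgrading the right-hand sides and $(v_\eps,u_\eps,\Theta_\eps)$. The paper simply asserts that ``all compatibility conditions'' hold because the data are smooth. You are right that regularity up to $t=0$ needs these conditions to be arranged through the choice of $(v_{0\eps},u_{0\eps},\Theta_{0\eps})$. Your fallback of working on $\bom\times(0,\tme)$ proves less than the lemma states. It would also not support the later uses, e.g.\ the integration from $t=0$ involving $\io\Theta_{0\eps zz}^2$ in Lemma~\ref{MassenAbsch3eps}. So the first option is the one to carry out.

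\textbf{The suggested condition on $\Theta_{0\eps}$ fails.} Vanishing odd derivatives at $\partial\Om$ is not enough here, because the coefficients are variable and the equations are coupled. Take $v_{0\eps}=u_{0\eps}=0$ near $\partial\Om$. The first equation of \eqref{approx} at $t=0$ then gives $v_{\eps t}(\cdot,0)=-\beta\,\Theta_{0\eps z}/\peps(\cdot,0)$ near $\partial\Om$. Since $v_{\eps zz}=0$ on $\partial\Om\times(0,\tme)$, smoothness up to $t=0$ forces $\partial_z^2 v_{\eps t}(\cdot,0)=0$ on $\partial\Om$, i.e.
\[
\partial_z^2\Big(\frac{\Theta_{0\eps z}}{\peps}\Big)
=\frac{\Theta_{0\eps zzz}}{\peps}-\frac{2\rho_{\eps z}}{\peps^{2}}\,\Theta_{0\eps zz}+\Theta_{0\eps z}\,\partial_z^2\Big(\frac{1}{\peps}\Big)=0
\qquad\text{on }\partial\Om\times\{0\}.
\]
With odd derivatives of $\Theta_{0\eps}$ vanishing, this reduces to $\rho_{\eps z}(\cdot,0)\,\Theta_{0\eps zz}=0$ on $\partial\Om$. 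Nothing guarantees this: \eqref{ab6} and \eqref{ab7} only constrain $b_{\eps z}$, $k_{\eps z}$, $p_{\eps z}$ at the boundary. For such data the conclusion of the lemma is false, so no bootstrap can reach it.

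\textbf{A choice that works.} Take $\Theta_{0\eps}$ constant in a neighbourhood of $\partial\Om$ (e.g.\ $0\le\Theta_{0\eps}\in C_0^\infty(\Om)$), together with $v_{0\eps},u_{0\eps}\in C_0^\infty(\Om)$. An induction on $j$ through the three equations shows that every formal time derivative $\partial_t^j(v_\eps,u_\eps,\Theta_\eps)(\cdot,0)$ with $j\ge1$ vanishes near $\partial\Om$. Hence all compatibility conditions of every order hold. The convergences \eqref{initkonv} can still be achieved; for $u_{0\eps}$ this uses that $u_0$ has zero trace, which the Dirichlet condition $u_\eps=0$ forces anyway.

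\textbf{Smaller points.} In Step~1, $(p_\eps u_{\eps z})_z$ contains $u_{\eps zz}$, which is merely continuous when $u_\eps\in C^{2,1}$; interpolation does not make it H\"older. The same issue makes your induction claim (``the right-hand sides gain one order'') false for the $v_\eps$-equation unless $u_\eps$ is upgraded first. The ordering in your last paragraph handles this: apply Schauder theory to $u_{\eps t}=\eps u_{\eps zz}+v_\eps$ before the $v_\eps$-equation. Finally, the time-differentiated boundary conditions stay homogeneous simply because they have constant coefficients; \eqref{ab6} and \eqref{ab7} are not needed for that.
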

\begin{proof}
    Since all initial data and coefficients are smooth, the right-hand sides of each equation are immediately Hölder-continuous at $t=0$. Standard parabolic Schauder theory (\cite[Ch. IV, Theorem~5.3]{LSU}) then imply that there exists $\alpha_1\in(0,1)$, such that $v_\eps\in C^{1+\alpha_1,\frac{1+\alpha_1}{2}}(\overline{\Om}\times[0,\tme))$, $u_\eps\in C^{2+\alpha_1,\frac{2+\alpha_1}{2}}(\overline{\Om}\times[0,\tme))$ and $\Theta_\eps\in C^{1+\alpha_1,\frac{1+\alpha_1}{2}}(\overline{\Om}\times[0,\tme))$. As $\rho_\eps$, $\Gamma_\eps$, $\Gamma_\eps'$, $p_\eps$, $p_\eps'$, $b_\eps$, $k_\eps$ and $k_\eps$ are smooth and locally Lipschitz continuous in $[0,\infty)$ according to our assumptions, this particularly implies the existence of $\alpha_2\in(0,\alpha_1]$ such that $$\frac{\Gamma_{\eps z}(\Theta_\eps)\Theta_{\eps z}v_{\eps z}}{\rho_\eps}+\frac{(\rho_\eps u_{\eps z})_z}{\rho_\eps}-\frac{\beta\Theta_{\eps z}}{\rho_\eps}\in C^{\alpha_2,\frac{\alpha_2}{2}}(\overline{\Om}\times[0,\tme)),$$
    $$v_\eps\in C^{1+\alpha_2,\frac{1+\alpha_2}{2}}(\overline{\Om}\times[0,\tme))\qquad \text{and}$$
    $$\frac{k_{\eps z}\Theta_{\eps z}}{b_\eps}+\frac{\Gamma_\eps(\Theta_\eps)v_{\eps z}^2}{b_\eps}-\frac{\beta\Theta_\eps v_{\eps z}}{b_\eps}\in C^{\alpha_2,\frac{\alpha_2}{2}}(\overline{\Om}\times[0,\tme)),$$
    where, due to the fact that $v_{0\eps}$, $u_{0\eps}$ and $\Theta_{0\eps}$ are smooth and all compatibility conditions from \cite[Ch. IV, Theorem~5.3]{LSU} are satisfied, basic parabolic Schauder theory (\cite[Ch. IV, Theorem~5.3]{LSU}) yields that $u_\eps\in C^{3+\alpha_2,\frac{3+\alpha_2}{2}}(\overline{\Om}\times[0,\tme))$, and that $v_\eps$ and $\Theta_\eps$ are in $C^{2+\alpha_2,\frac{2+\alpha_2}{2}}(\overline{\Om}\times[0,\tme))$, where a straightforward iterative bootstrap argument shows \eqref{ae1infty}.
\end{proof}

\noindent It is evident from the examination of three elementary test procedures that an energy structure for \eqref{approx} can be obtained, thus enabling the formulation of preliminary estimates. The results are largely of independent use and are therefore listed separately here:
\begin{lemma}
    \label{vepsuepslem}
    If $\varepsilon\in(0,1)$, then
    \bea{vepsueps}
    &&\hspace{-10mm}
    \frac{1}{2}\frac{d}{dt}\io \rho_\eps v_\eps^2+\frac{1}{2}\frac{d}{dt}\io p_\eps u_{\eps z}^2+\io \Gamma_\eps(\Theta_\eps) v_{\eps z}^2+\eps \io v_{\eps zz}^2+\eps \io p_\eps u_{\eps zz}^2+\eps\io p_{\eps z} u_{\eps z}u_{\eps zz}\nonumber\\
    &&= \beta\io \Theta_\eps v_{\eps z}+\frac12\io \rho_{\eps t} v_\eps^2+\frac{1}{2}\io p_{\eps t} u_{\eps z}^2\qquad\text{for all }t\in(0,\tme).
    \eea
\end{lemma}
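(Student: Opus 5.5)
The identity should follow from two testing procedures whose cross terms cancel. We test the first equation in \eqref{approx} with $v_\eps$, and the $z$-derivative of the second equation with $p_\eps u_{\eps z}$. All functions are smooth up to the boundary by \eqref{ae1infty}, so every integration by parts is justified for $t\in(0,\tme)$.

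\emph{First equation.} Multiplying by $v_\eps$ and integrating, the left side gives
\[
\io \rho_\eps v_\eps v_{\eps t}=\frac12\frac{d}{dt}\io\rho_\eps v_\eps^2-\frac12\io\rho_{\eps t}v_\eps^2 .
\]
For the fourth-order term, integrate by parts twice. The boundary terms $[v_{\eps zzz}v_\eps]$ and $[v_{\eps zz}v_{\eps z}]$ vanish because $v_\eps=v_{\eps zz}=0$ on $\partial\Om$, so
\[
-\eps\io v_{\eps zzzz}v_\eps=-\eps\io v_{\eps zz}^2 .
\]
Since $v_\eps=0$ on $\partial\Om$, the remaining three terms become $-\io\Gamma_\eps(\Theta_\eps)v_{\eps z}^2$, $-\io p_\eps u_{\eps z}v_{\eps z}$ and $\beta\io\Theta_\eps v_{\eps z}$.

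\emph{Second equation.} Differentiate $u_{\eps t}=\eps u_{\eps zz}+v_\eps$ in $z$ to get $u_{\eps zt}=\eps u_{\eps zzz}+v_{\eps z}$, multiply by $p_\eps u_{\eps z}$ and integrate. The left side yields
\[
\frac12\frac{d}{dt}\io p_\eps u_{\eps z}^2-\frac12\io p_{\eps t}u_{\eps z}^2 .
\]
The term $\io p_\eps u_{\eps z}v_{\eps z}$ exactly cancels the coupling term from the first equation. For the regularizing term,
\[
\eps\io p_\eps u_{\eps z}u_{\eps zzz}=\eps\big[p_\eps u_{\eps z}u_{\eps zz}\big]_{\partial\Om}-\eps\io p_\eps u_{\eps zz}^2-\eps\io p_{\eps z}u_{\eps z}u_{\eps zz}.
\]

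\emph{Main obstacle.} The only delicate point is showing that the boundary term $[p_\eps u_{\eps z}u_{\eps zz}]$ vanishes, since no condition on $u_{\eps zz}$ is imposed directly. It follows from the equation on the boundary. Because $u_\eps=0$ on $\partial\Om$ for all $t>0$, we have $u_{\eps t}=0$ there. Together with $v_\eps=0$ on $\partial\Om$, the second equation of \eqref{approx} gives $\eps u_{\eps zz}=0$ on $\partial\Om$, and by continuity of the smooth solution this holds for all $t\in(0,\tme)$.

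Adding the two identities, the cross terms cancel and the remaining terms rearrange to \eqref{vepsueps}.
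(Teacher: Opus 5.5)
Your proof is correct and follows the same route as the paper. You test the first equation of \eqref{approx} with $v_\eps$ and the $z$-differentiated second equation with $p_\eps u_{\eps z}$, so the coupling terms $\pm\io p_\eps u_{\eps z}v_{\eps z}$ cancel. You also argue explicitly that the boundary term $\eps\big[p_\eps u_{\eps z}u_{\eps zz}\big]_{\partial\Omega}$ vanishes because $u_{\eps zz}=0$ on $\partial\Omega$; the paper leaves this implicit here and only records it later, in the proof of Lemma~\ref{MassenAbsch2eps}.
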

\begin{proof}
    Testing the first equation of \eqref{approx} with $v_\eps$ gets us
    \bea{t1}
        \frac{1}{2}\frac{d}{dt}\int_\Omega \peps v_\eps^2&=&-\eps\io v_{\eps zzzz}v_\eps + \io (\Geps(\Theta_\eps) v_{\eps z})_z v_\eps + \io (p_\eps u_{\eps z})_z v_{\eps}-\beta\io\Theta_{\eps z} v_{\eps }+\frac{1}{2}\io \rho_{\eps t}v_\eps^2\nonumber\\
        &=& -\eps\io v_{\eps zz}^2-\int_\Omega\Gamma_\eps(\Theta_\eps) v_{\eps z}^2-\io p_\eps u_{\eps z}v_{\eps z}+\beta\io\Theta_\eps v_{\eps z}+\frac12\io \rho_{\eps t}v_\eps^2
    \eea
    for all $t\in(0,\tme)$ and $\eps\in(0,1)$. Testing the second equation in \eqref{approx} we obtain
    \bea{t1.1}
    \frac{1}{2}\frac{d}{dt}\io p_\eps u_{\eps z}^2&=&\io p_\eps u_{\eps z}\cdot(\eps u_{\eps zz}+v_\eps)_z+\frac{1}{2}\io p_{\eps t} u_{\eps z}^2\nonumber\\
    &=&-\eps\io p_\eps u_{\eps zz}^2-\eps\io p_{\eps z} u_{\eps z}u_{\eps zz} +\io p_\eps u_{\eps z}v_{\eps z}+\frac{1}{2}\io p_{\eps t} u_{\eps z}^2
    \eea
    for all $t\in(0,\tme)$ and $\eps\in(0,1)$. Therefore, \eqref{vepsueps} follows directly from \eqref{t1} and \eqref{t1.1}. 
\end{proof}
\noindent When supplemented by a simple observation on evolution of the functional $\io\Theta_\eps$, the above yields the following regularized statement:
\begin{lemma}\label{energy1Lem}
    For any $T>0$ there exist $C_1(T)>0$ and $C_2(T)>0$ such that for all $\eps\in(0,1)$ the solution of \eqref{approx} satisfies for all $t\in(0,\tme)\cap(0,T)$
    \bea{energy1}
    &&\hspace{-10mm}
    \frac{d}{dt}\bigg\{\frac{1}{2}\io\rho_\eps v_{\eps}^2+\frac{1}{2}\io u_\eps^2+\frac{1}{2}\io p_\eps u_{\eps z}^2+\io b_\eps\Theta_\eps\bigg\}+\eps C_1(T)\io v_{\eps zz}^2+\eps C_1(T)\io u_{\eps zz}^2\nonumber\\
    &\le& C_2(T)\io v_{\eps }^2+C_2(T)\io u_\eps^2+C_2(T)\io u_{\eps z}^2+\|b_{\eps t}\|_{L^\infty(\Omega)}\io \Theta_\eps.
    \eea
    \end{lemma}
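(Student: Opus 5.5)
We combine the identity \eqref{vepsueps} from Lemma \ref{vepsuepslem} with two further elementary computations: the time evolution of $\frac12\io u_\eps^2$ and of $\io b_\eps\Theta_\eps$. Throughout, fix $T>0$. By \eqref{ab2}, \eqref{ab5.1}, \eqref{ab3} and the convergences \eqref{KGZ_Rho}, \eqref{KGZ_Pt}, \eqref{KGZ_Pz}, \eqref{KGZ_B}, the quantities $\|\rho_{\eps t}\|_{L^\infty}$, $\|p_{\eps t}\|_{L^\infty}$, $\|p_{\eps z}\|_{L^\infty}$ and $\|b_{\eps t}\|_{L^\infty}$ are bounded on $\bom\times[0,T]$ uniformly in $\eps\in(0,1)$, for $\eps$ small enough. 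For the remaining $\eps$ we would either use compactness of $[\eps_0,1)$ or simply assume, as is implicit in the construction of the approximants, that these bounds hold uniformly. Moreover, $p_\eps\ge p\ge \underline p(T):=\min_{\bom\times[0,T]}p>0$.

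\emph{Step 1 (temperature).} Integrate the third equation of \eqref{approx} over $\Om$. Because $\Theta_{\eps z}=0$ on $\partial\Om$, the diffusion term drops out, and we get
\[
\dt\io b_\eps\Theta_\eps=\io b_{\eps t}\Theta_\eps+\io\Gamma_\eps(\Theta_\eps)v_{\eps z}^2-\beta\io\Theta_\eps v_{\eps z}.
\]
Adding this to \eqref{vepsueps}, the terms $\io\Gamma_\eps(\Theta_\eps)v_{\eps z}^2$ and $\beta\io\Theta_\eps v_{\eps z}$ cancel exactly. This cancellation is the energy structure of the system. Using $\Theta_\eps\ge0$ (Lemma \ref{2.1Ref}), the term $\io b_{\eps t}\Theta_\eps$ is bounded by $\|b_{\eps t}\|_{L^\infty(\Om)}\io\Theta_\eps$.

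\emph{Step 2 ($u_\eps$ itself).} Test the second equation with $u_\eps$ and use $u_\eps=0$ on $\partial\Om$:
\[
\frac12\dt\io u_\eps^2=-\eps\io u_{\eps z}^2+\io u_\eps v_\eps\le\frac12\io u_\eps^2+\frac12\io v_\eps^2 .
\]

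\emph{Step 3 (the $\eps$-terms; main obstacle).} The only indefinite term is the cross term $\eps\io p_{\eps z}u_{\eps z}u_{\eps zz}$ from \eqref{vepsueps}. Young's inequality gives
\[
\eps\Big|\io p_{\eps z}u_{\eps z}u_{\eps zz}\Big|\le\frac{\eps}{2}\io p_\eps u_{\eps zz}^2+\frac{\eps}{2}\io\frac{p_{\eps z}^2}{p_\eps}u_{\eps z}^2 .
\]
The last term is at most $C\io u_{\eps z}^2$, with $C=\sup p_{\eps z}^2/\underline p$, using $\eps<1$. This leaves $\frac{\eps}{2}\underline p\io u_{\eps zz}^2$ on the left. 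Together with $\eps\io v_{\eps zz}^2$, this lets us take $C_1(T):=\min\{1,\underline p/2\}$. The terms $\frac12\io\rho_{\eps t}v_\eps^2$ and $\frac12\io p_{\eps t}u_{\eps z}^2$ are bounded by constant multiples of $\io v_\eps^2$ and $\io u_{\eps z}^2$. We also drop the nonnegative terms $\io\Gamma_\eps v_{\eps z}^2$ (already cancelled) and $\eps\io u_{\eps z}^2$.

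\emph{Step 4.} Collecting Steps 1--3 yields \eqref{energy1}, with $C_2(T)$ the maximum of the constants obtained above. The only genuine care needed is in Step 3: keeping the constants independent of $\eps$ relies on the uniform positive lower bound for $p_\eps$ and the uniform bound for $p_{\eps z}$ on $[0,T]$.
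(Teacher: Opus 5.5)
Your proof is correct and follows the paper's argument step by step. The paper also adds \eqref{vepsueps}, the identity $\frac{d}{dt}\io b_\eps\Theta_\eps=\io\Gamma_\eps(\Theta_\eps)v_{\eps z}^2-\beta\io\Theta_\eps v_{\eps z}+\io b_{\eps t}\Theta_\eps$ and the $u_\eps$-test to get the two cancellations, and it absorbs $\eps\io p_{\eps z}u_{\eps z}u_{\eps zz}$ by Young's inequality using $p_\eps\ge 1/c_1$ and $|p_{\eps z}|\le c_1$, giving $C_1(T)=\frac{1}{2c_1}$ and $C_2(T)=c_1^3+1$. One correction: your ``compactness of $[\eps_0,1)$'' fallback does not work, since that interval is not compact and nothing makes the approximants depend continuously on $\eps$; the $\eps$-uniform bounds on $\rho_{\eps t}$, $p_{\eps t}$, $p_{\eps z}$ and $b_{\eps t}$ simply have to be built into the choice of approximants, as the paper also tacitly assumes.
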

    \begin{proof}
    Since $\rho_\eps$ and $p_\eps$ satisfy \eqref{ab2} and \eqref{ab5.1} as well as Assumptions \ref{vorrfunc} holds, there exists $c_1=c_1(T)>1$ such that
    $$\frac{1}{c_1} \le p_\eps(z,t), \quad p_{\eps z}(z,t) \le c_1, \quad p_{\eps t}(z,t) \le c_1,\quad\mbox{and}\quad \rho_{\eps t}(z,t) \le c_1$$
    for all $(z,t)\in\Om\times(0,T)$ and $\eps\in(0,1)$ and with Young's inequality
    $$\eps\Big|\io p_{\eps z} u_{\eps z} u_{\eps zz}\Big|\le \frac{\eps}{2c_1}\io u_{\eps zz}^2 +\frac{\eps c_1^3}{2} \io u_{\eps z}^2 \qquad\mbox{for all }t\in(0,\tme)\cap(0,T)\text{ and }\eps\in(0,1).$$
    Testing the second equation of \eqref{approx} with $u_\eps$ directly vields
    \bea{tt3.3}
    \frac{1}{2}\frac{d}{dt}\io u_\eps^2\leq \io u_\eps v_\eps\leq\io u_\eps^2+\io v_\eps^2\quad\text{for all }t\in(0,\tme)\text{ and }\eps\in(0,1).
    \eea
    As
    \bea{t3}
    \quad\frac{d}{dt}\io b_\eps \Theta_\eps=\io \Gamma_\eps(\Theta_\eps)v_{\eps z}^2-\beta\io\Theta_\eps v_{\eps z}+\io b_{\eps t}\Theta_\eps\quad\text{for all }t\in(0,\tme)\text{ and }\eps\in(0,1)\hspace{-10mm}
    \eea  
    by the third equation in \eqref{approx}, we obtain \eqref{energy1} from \eqref{vepsueps}, \eqref{tt3.3} and \eqref{t3} thanks to two favorable cancellations for $C_1(T):=\frac{1}{2c_1}$ and $C_2(T):=c_1^3+1$ in view of $c_1>1$. 
    \end{proof}
\noindent Evident consequences of \eqref{energy1} provide some fundamental regularity information:
\begin{lemma}\label{MassenAbsch}
    For any $T>0$ there exists $C(T)>0$ such that for all $\eps\in(0,1)$
    \bea{t5}
    \io v_\eps^2(\cdot,t)\le C(T)\qquad\text{for all }t\in(0,\tme)\cap(0,T)
    \eea
    and
    \bea{t5/5}
    \io u_\eps^2(\cdot,t)\le C(T)\qquad\text{for all }t\in(0,\tme)\cap(0,T)
    \eea
    and
    \bea{t6}
    \io u_{\eps z}^2(\cdot,t)\le C(T)\qquad\text{for all }t\in(0,\tme)\cap(0,T)
    \eea
    and
    \bea{t7}
    \io \Theta_\eps(\cdot,t)\le C(T)\qquad\text{for all }t\in(0,\tme)\cap(0,T)
    \eea
    as well as
    \bea{t8}
    \eps\int_0^t\io v_{\eps zz}^2\le C(T)\qquad\text{for all }t\in(0,\tme)\cap(0,T)
    \eea
    and
    \bea{t9}
    \eps\int_0^t\io u_{\eps zz}^2\le C(T)\qquad\text{for all }t\in(0,\tme)\cap(0,T).
    \eea
    \begin{proof}
    Let $C_1(T)$ and $C_2(T)$ be as in Lemma \ref{energy1Lem}. We define for all $\eps\in(0,1)$
    \bea{t9.1}
    \hspace{10mm}y_\eps(t):=\Big\{\frac{1}{2}\io\rho_\eps v_{\eps}^2+\frac{1}{2}\io u_\eps^2+\frac{1}{2}\io p_\eps u_{\eps z}^2+\io b_\eps\Theta_\eps\Big\}\quad\text{for all }t\in(0,\tme)\cap(0,T).
    \eea
    Due to \eqref{ab2}, \eqref{ab5.1}, \eqref{ab3} and Assumptions \ref{vorrfunc}, we find $c_1=c_1(T)>0$ and $c_2=c_2(T)>0$ such that
    \bea{tn1}
    c_1\le \rho_\eps(z,t)\le c_2,\quad c_1\le p_\eps(z,t)\le c_2\quad\text{and}\quad c_1\le b_\eps(z,t)\le c_2
    \eea
    for all $(z,t)\in\Om\times(0,T)$ and $\eps\in(0,1)$, so that with \eqref{energy1} the following applies 
    \bea{t9.3}
    \hspace{10mm}y_\eps'(t)\le \frac{C_2(T)}{c_1}\io \rho_\eps v_\eps^2+C_2(T)\io u_\eps^2+\frac{C_2(T)}{c_1}\io p_\eps u_{\eps z}^2+\frac{\|b_{\eps t}\|_{L^\infty(\Omega)}}{c_1}\io b_\eps\Theta_\eps\le c_4 y_\eps(t)
    \eea
    for all $t\in(0,\tme)\cap(0,T)$ and $\eps\in(0,1)$, with $c_3\equiv c_3(T):=\|b_{\eps t}\|_{L^\infty((0,T)\times\Om)}$ and $c_4\equiv c_4(T):=\max\big\{\frac{2C_2(T)}{c_1},2C_2(T),\frac{\|b_{\eps t}\|_{L^\infty((0,T)\times\Omega)}}{c_1}\big\}$. Thus,
    $$y(t)\le y_0 e^{c_3 T}\qquad\text{for all }t\in(0,\tme)\cap(0,T)\text{ and }\eps\in(0,1)$$
    follows directly from the Gronwall Lemma, and thus also statements \eqref{t5}-\eqref{t7}. Now an integration of \eqref{energy1} results in the inequality
        \bea{t10}
        &&\hspace{-10mm}
        \frac{1}{2}\io \rho_\eps v_\eps^2+\frac{1}{2}\io u_\eps^2+\frac{1}{2}\io p_\eps u_{\eps z}^2+\io b_\eps \Theta_\eps+\eps C_1(T)\int_0^t\io v_{\eps zz}^2+\eps C_1(T)\int_0^t\io u_{\eps zz}^2\nonumber\\
        &\le& \frac{1}{2}\io \rho_\eps(\cdot,0)v_{0\eps}^2+\frac{1}{2}\io u_{0\eps }^2+\frac{1}{2}\io p_{\eps}(\cdot,0)u_{0\eps z}^2+\io b_\eps(\cdot,0)\Theta_{0\eps}\nonumber\\
        &&+C_2(T)\int_0^t\io v_{\eps }^2+C_2(T)\int_0^t\io u_{\eps }^2+C_2(T)\int_0^t\io u_{\eps z}^2+\|b_{\eps t}\|_{L^\infty(\Omega)}\int_0^t\io \Theta_\eps
        \eea
        for all $t\in(0,\tme)\cap(0,T)$ and $\eps\in(0,1)$.
    This already implies \eqref{t8} and \eqref{t9} due to the nonnegativity of $\Theta_\eps$ and due to \eqref{t5}-\eqref{t7}.
    \end{proof}
\end{lemma}
 
\subsection{Global solvability of the approximate problems}
The goal of this section is to ensure that for any fixed $\eps\in(0,1)$, the first alternative of the extensibility criterion cannot occur. So, we look at higher regularity properties that might be dependent on $\eps$. The initial step in this process is to demonstrate that $\Theta_\eps$ is limited in $L^2(\Omega)$.
\begin{lemma}
    If $\tme<\infty$ for some $\eps\in(0,1)$, then there exists $C(\eps)>0$ such that
    \bea{tt0}
    \io\Theta_\eps^2\le C(\eps)\qquad\text{for all }t\in(0,\tme)
    \eea
    and
    \bea{tt01}
    \int_0^{\tme}\io\Theta_{\eps z}^2<\infty.
    \eea
    \begin{proof}
        Due to $\tme<\infty$ we can find $c_1=c_1(\eps)>0$ fulfilling
        \bea{tn2}
        c_1\le k_\eps(z,t)\qquad\text{for all }(z,t)\in\Om\times(0,\tme),
        \eea
        and due to \eqref{t5}, \eqref{t8} and the Gagliardo-Nirenberg inequality as well as Young's inequality, we can find $c_2>0$ and $c_3=c_3(\eps)>0$ such that
        \bea{GN00}
        \int_0^t\|v_{\eps z}\|_{L^3(\Omega)}^3\le c_2\int_0^t\bigg\{\io \|v_{\eps zz}\|_{L^2(\Om)}^\frac{7}{4}\|v_{\eps}\|_{L^2(\Om)}^\frac{5}{4} +\|v_\eps\|_{L^2(\Om)}^3\bigg\}\le c_3
        \eea
        for all $t\in(0,\tme)$. Testing the third equation of \eqref{approx} yields
        \bea{tt0.1}
        \frac{1}{2}\frac{d}{dt}\io b_\eps\Theta_\eps^2&=&-\io k_\eps\Theta_{\eps z}^2+\io\Geps(\Theta_\eps)v_{\eps z}^2\Theta_\eps-\beta\io\Theta_\eps^2 v_{\eps z}+\frac{1}{2}\io b_{\eps t}\Theta_\eps^2\nonumber\\
        &\le&-\io k_\eps\Theta_{\eps z}^2+ C_\Gamma\io v_{\eps z}^2\Theta_\eps-\beta\io \Theta_\eps^2 v_{\eps z}+\frac{1}{2}\io b_{\eps t}\Theta_\eps^2\nonumber\\
        &\le&-\io k_\eps\Theta_{\eps z}^2+(C_\Gamma+\beta)\io v_{\eps z}^3+(C_\Gamma+\beta+1)\io \Theta_\eps^3+\|b_{\eps t}\|_{L^\infty(\Omega)}^2\io\Theta_\eps
        \eea
        for all $t\in(0,\tme)$, whereby again by the Gagliardo-Nirenberg inequality, Young's inequality, \eqref{t7} and \eqref{tn2}
        \bea{tt0.11}
        (C_\Gamma+\beta+1)\io \Theta_\eps^3&\le& c_4\|\Theta_{\eps z}\|_{L^2(\Omega)}^\frac{4}{3}\|\Theta_\eps\|_{L^1(\Omega)}^\frac{5}{3}+c_4\|\Theta_\eps\|_{L^1(\Omega)}^3\nn\\
        &\le& \frac{c_1}{2}\|\Theta_{\eps z}\|_{L^2(\Om)}^2+c_5\|\Theta_\eps\|_{L^1(\Om)}^5+c_4\|\Theta_\eps\|_{L^1(\Omega)}^3\nonumber\\
        &\le &\frac{c_1}{2}\|\Theta_{\eps z}\|_{L^2(\Om)}^2+c_6\nn
        \eea
        for all $t\in(0,\tme)$, with some $c_4>0$, $c_5=c_5(\eps)>0$ and $c_6=c_6(\eps)>0$. In conjunction with \eqref{tt0.1}, and following an integration over time, this results directly in
        \bea{tt0.2}
        \frac{1}{2}\io b_\eps \Theta_\eps^2+\frac{1}{2}\int_0^t\io k_\eps\Theta_{\eps z}^2\le (C_\Gamma+\beta)\int_0^t\io v_{\eps z}^3+\int_0^t \|b_{\eps t}\|_{L^\infty(\Omega)}^2\io \Theta_\eps+\io b_\eps(\cdot,0)\Theta_{0\eps}^2+c_6t\nonumber
        \eea
        for all $t\in(0,\tme)$. Thus \eqref{tt0} and \eqref{tt01} are direct consequences of  \eqref{t7} and \eqref{GN00}.
    \end{proof}
\end{lemma}
\noindent This lets us now use the fourth-order artificial diffusion mechanism in the first equation in \eqref{approx}, as well as the second-order diffusion mechanism in the second equation in \eqref{approx}, to obtain the following.
\begin{lemma}\label{MassenAbsch2eps}
    If $\tme<\infty$ for some $\eps\in(0,1)$, then there exists $C(\eps)>0$ such that
    \bea{tt1}
    \io v_{\eps z}^2(\cdot,t)\le C(\eps)\qquad\text{for all }t\in(0,\tme)
    \eea
    and
    \bea{tt2}
    \io u_{\eps zz}^2(\cdot,t)\le C(\eps)\qquad\text{for all }t\in(0,\tme),
    \eea
    and moreover we have
    \bea{tt3}
    \int_0^{\tme}\io v_{\eps zzz}^2<\infty\quad{and}\quad \int_0^{\tme}\io u_{\eps zzz}^2<\infty.
    \eea
    \begin{proof}
        Due to \eqref{t6}, \eqref{tt0} and the fact that $\tme<\infty$ we can find $c_1=c_1(\eps)>0$ such that
        \bea{tt3.1}
        \io u_{\eps z}^2\le c_1\quad\text{and}\quad \io \Theta_\eps^2\le c_1\qquad\text{for all }t\in(0,\tme),
        \eea
        and because of our assumptions on $\rho_\eps$ we can find $c_2=c_2(\eps)>0$ and $c_3=c_3(\eps)>0$ such that  
        \bea{tn3}
        c_2\le \rho_\eps(z,t)\le c_3,\quad \rho_{\eps z}(z,t)\le c_3\quad\text{and}\quad p(z,t)\le c_3\quad\text{for all }(z,t)\in\Om\times(0,\tme).\nn
        \eea
        Relying on the smoothness properties of $(v_\eps,u_\eps,\Theta_\eps)$ recorded in Lemma \ref{2.1Ref}, we test the first equation of \eqref{approx} with $-\frac{v_{\eps zz}}{\rho_\eps}$ to see that for all $t\in(0,\tme)$
        \bea{tt4}
        \frac{1}{2}\frac{d}{dt}\io  v_{\eps z}^2+\eps\io \frac{1}{\peps}v_{\eps zzz}^2&=&\eps\io\frac{\rho_{\eps z}}{\peps^2}v_{\eps zzz}v_{\eps zz}+ \io \frac{\Geps(\Theta_\eps)}{\peps}v_{\eps z}v_{\eps zzz}-\io\frac{\rho_{\eps z}}{\peps^2}\Geps(\Theta_\eps)v_{\eps z}v_{\eps zz}\nn\\
        &&+\io \frac{p_{\eps}}{\peps}u_{\eps z}v_{\eps zzz}-\io\frac{\rho_{\eps z}}{\peps^2}p_{\eps}u_{\eps z}v_{\eps zz}+\beta\io\frac{1}{\peps} \Theta_{\eps z} v_{\eps zz}.
        \eea
        By means of Young's inequality we moreover get that
        \bea{tt5}
        \eps\io\frac{\rho_{\eps z}}{\peps^2}v_{\eps zzz}v_{\eps zz}\le\frac{\eps}{8 c_3}\io v_{\eps zzz}^2+ \frac{2\eps c_3^3}{c_2^4}\io v_{\eps zz}^2\qquad\text{for all }t\in(0,\tme),
        \eea
        and
        \bea{tt5ttt}
        &&\hspace{-10mm}
        \io\frac{\Geps(\Theta_\eps)}{\peps}v_{\eps z}v_{\eps zzz}-\io\frac{\rho_{\eps z}}{\peps^2}\Geps(\Theta_\eps)v_{\eps z}v_{\eps zz}\nn\\
        &\le& \frac{\eps }{8c_3}\io v_{\eps zzz}^2+\Big\{\frac{2c_3 C_\Gamma^2}{\eps c_2^2}+\frac{C_\Gamma c_3}{c_2^2}\Big\}\io v_{\eps z}^2+\frac{C_\Gamma c_3}{c_2^2}\io v_{\eps zz}^2\qquad\text{for all }t\in(0,\tme),
        \eea
        as well as
        \bea{tt6}
        \io \frac{p_{\eps}}{\peps}u_{\eps z} v_{\eps zzz}\le\frac{\eps }{8 c_3}\io v_{\eps zzz}^2+\frac{2 c_3^3}{\eps c_2^2}\io u_{\eps z}^2
        \eea
        for all $t\in(0,\tme)$, and
        \bea{tt6-7t}
        -\io\frac{\rho_{\eps z}}{\peps^2}p_{\eps}u_{\eps z}v_{\eps zz}\le \frac{c_3^2}{c_2^2}\io u_{\eps z}^2+\frac{c_3^2}{c_2^2}\io v_{\eps zz}^2
        \eea
        for all $t\in(0,\tme)$, as well as
        \bea{tt7}
        \io\frac{\beta}{\peps} \Theta_{\eps z}v_{\eps zz}&=&-\io \frac{\beta}{\peps}\Theta_\eps v_{\eps zzz}+\beta\io\frac{\rho_{\eps z}}{\peps^2}\Theta_\eps v_{\eps zz}\nn\\
        &\le&\frac{\eps}{8 c_3}\io v_{\eps zzz}^2+\Big\{\frac{2 c_3\beta^2}{\eps c_2^2}+\beta \frac{c_3}{c_2^2}\Big\}\io\Theta_\eps^2 +\beta \frac{c_3}{c_2^2}\io v_{\eps zz}^2\qquad\text{for all }t\in(0,\tme).
        \eea
        Combining \eqref{tt4}-\eqref{tt7} yields
        \bea{tt7-8t}
        \frac{1}{2}\frac{d}{dt}\io v_{\eps z}^2+\frac{\eps}{2}\io\frac{1}{\peps}v_{\eps zzz}^2\le c_4\io v_{\eps zz}^2+c_4\io v_{\eps z}^2+c_4\io u_{\eps z}^2+c_4\io\Theta_{\eps}^2
        \eea
        for all $t\in(0,\tme)$, with $$c_4\equiv c_4(\eps):=\max\left\{\frac{2\eps c_3^3}{c_2^4} +\frac{C_\Gamma c_3}{c_2^2}+\frac{c_3^2}{c_2^2}+\beta \frac{c_3}{c_2^2},\frac{2c_3 C_\Gamma^2}{\eps c2^2}+\frac{C_\Gamma c_3}{c_2^2}, \frac{2 c_3^3}{\eps c_2^2}+\frac{c_3^2}{c_2^2},\frac{2c_3\beta^2}{\eps c_2^2}+\beta \frac{c_3}{c_2^2}\right\}.$$ Due to the Gagliardo Nirenberg inequality, we can find $c_5=c_5(\eps)>0$ such that
        \bea{tt4-5}
        c_4\io v_{\eps zz}^2\le \frac{\eps}{4 c_3}\io v_{\eps zzz}^2+c_5\io v_{\eps z}^2\qquad\text{for all }t\in(0,\tme).
        \eea
        As furthermore $u_{\eps t}=\eps u_{\eps zz}+v_\eps$ on $\partial\Omega$ for all $t\in(0,\tme)$ and thus $u_{\eps zz}=0$ on $\partial\Omega$ for all $t\in(0,\tme)$, we obtain
        \bea{tt8}
        \frac{1}{2}\frac{d}{dt}\io u_{\eps zz}^2+\eps \io u_{\eps zzz}^2=-\io v_{\eps z}u_{\eps zzz} \le \frac{\eps}{2}\io u_{\eps zzz}^2+\frac{1}{2\eps}\io v_{\eps z}^2
        \eea
        for all $t\in(0,\tme)$ by the second equation in \eqref{approx}, from \eqref{tt3.1} and \eqref{tt7-8t}-\eqref{tt8} we infer that $y(t):=\io v_{\eps z}^2+\io u_{\eps zz}^2+1$, $t\in[0,\tme)$, satisfies
        \bea{tt9}
        y'(t)+\frac{\eps}{2}\io\frac{1}{\peps} v_{\eps zzz}^2+\eps\io u_{\eps zzz}^2\le c_6 y(t)\qquad\text{for all }t\in(0,\tme)
        \eea
        with $c_6\equiv c_6(\eps):=\max\{2c_4+2c_5+\eps^{-1},4c_1c_4\}$. By an ODE comparison argument, this firstly implies \eqref{tt1} and \eqref{tt2}, and thereupon an integration in \eqref{tt9} shows that \eqref{tt3} holds.
    \end{proof}
\end{lemma}
\noindent Prior to refuting \eqref{ae2} for $\tme<\infty$, it is necessary to collect several additional regularities.
\begin{lemma}
    If $\eps\in(0,1)$ is such that $\tme<\infty$, then there exists $C(\eps)>0$ fulfilling
    \bea{tt10.1}
    \io \Theta_{\eps z}^2(\cdot,t)\le C(\eps)\qquad\text{for all }t\in(0,\tme)
    \eea
    and
    \bea{tt102}
    \int_0^{\tme}\io \Theta_{\eps zz}^2<\infty.
    \eea
    \begin{proof}
        Due to the Gagliardo-Nirenberg inequality, we can find $c_1>0$ such that
        \bea{GNtt}
        \|\varphi\|_{L^4(\Om)}^4\le c_1\|\varphi_z\|_{L^2(\Om)}\|\varphi\|_{L^2(\Om)}^3+c_1\|\varphi\|_{L^2(\Om)}^4\qquad\text{for all }\varphi\in W^{1,2}(\Omega).
        \eea
        Furthermore there exist $c_2=c_2(\eps)>0$ and $c_3=c_3(\eps)>0$ such that
        $$c_2\le k_\eps(z,t)\le c_3\quad\text{and}\quad c_2\le b_{\eps}(z,t)\le c_3\quad\text{for all }(z,t)\in\Om\times(0,\tme).$$
        Testing the third equation of \eqref{approx} with $-\frac{\Theta_{\eps zz}}{b_\eps}$ yields
        \bea{tt10.2}
        \frac{1}{2}\frac{d}{dt}\io \Theta_{\eps z}^2+\io \frac{k_\eps}{b_\eps} \Theta_{\eps zz}^2=-\io \frac{1}{b_\eps}\big\{k_{\eps z}\Theta_{\eps z}+\Geps(\Theta_\eps) v_{\eps z}^2-\beta\Theta_{\eps}v_{\eps z}\big\}\cdot\Theta_{\eps zz}
        \eea
        for all $t\in(0,\tme)$. We can use Young's inequality to get the following results:
        \bea{tt10.3}
        -\io \frac{k_{\eps z}}{b_\eps} \Theta_{\eps z}\Theta_{\eps zz}\le \frac{c_2}{4 c_3}\io\Theta_{\eps zz}^2+\frac{c_3\|k_{\eps z}\|_{L^\infty(\Om)}^2}{c_2^3}\io \Theta_{\eps z}^2 \qquad\text{for all }t\in(0,\tme)
        \eea
        and
        \bea{tt10.4}
        -\io\frac{\Geps(\Theta_\eps)}{b_\eps}v_{\eps z}^2\Theta_{\eps zz}\le\frac{c_2}{4c_3}\io \Theta_{\eps zz}^2+\frac{c_3C_\Gamma^2}{c_2^3}\io v_{\eps z}^4\qquad\text{for all }t\in(0,\tme),
        \eea
        as well as
        \bea{tt10.5}
        \hspace{8mm}\io\frac{\beta}{b_\eps}\Theta_\eps v_{\eps z}\Theta_{\eps zz}\le\frac{c_2}{4c_3}\io \Theta_{\eps zz}^2+\frac{c_3\beta^2}{c_2^3}\io v_{\eps z}^4+\frac{c_3\beta^2}{c_2^3}\io \Theta_\eps^4\qquad\text{for all }t\in(0,\tme).
        \eea
        Combining \eqref{tt10.2}-\eqref{tt10.5}, we obtain 
        \bea{tt10.6}
        \frac{d}{dt}\io  \Theta_{\eps z}^2+\frac{1}{2}\io \frac{k_\eps}{b_\eps}\Theta_{\eps zz}^2\le c_4\io \Theta_{\eps z}^2+c_4\io v_{\eps z}^4+c_4\io \Theta_\eps^4\qquad\text{for all }t\in(0,\tme)\hspace{-10mm}
        \eea
        with $c_4\equiv c_4(\eps):=\max\big\{\frac{2c_3\|k_{\eps z}\|_{L^\infty(\Om)}^2}{c_2^3},\frac{2c_3C_\Gamma^2}{c_2^3}+\frac{2c_3\beta^2}{c_2^3}\big\}$.
        Due to \eqref{GNtt}, \eqref{tt0}, \eqref{tt1} and Young's inequality, we can find $c_5=c_5(\eps)>0$ and $c_6=c_6(\eps)>0$ fulfilling 
        \bea{tt10.7}
        c_4\io v_{\eps z}^4+c_4\io \Theta_\eps^4&\le& c_5\|v_{\eps zz}\|_{L^2(\Om)}+c_5\|\Theta_{\eps z}\|_{L^2(\Omega)}+c_5\nonumber\\
        &\le& c_6\io v_{\eps zz}^2+c_6\io\Theta_{\eps z}^2+c_6\qquad\text{for all }t\in(0,\tme).
        \eea
        In conjunction with \eqref{tt10.6}, and following an integration over time, this results directly in
        \bea{tt10.71}
        \io \Theta_{\eps z}^2+\frac{1}{2}\int_0^t\io \frac{k_\eps}{b_\eps}\Theta_{\eps zz}^2\le(c_4+c_6)\int_0^t\io\Theta_{\eps z}^2+c_6\int_0^t\io v_{\eps zz}^2+c_6t+\io \Theta_{0\eps z}^2\nonumber
        \eea
        for all $t\in(0,\tme)$. Now recalling \eqref{t8} and \eqref{tt01} we directly obtain  \eqref{tt10.1} and \eqref{tt102}.
    \end{proof} 
\end{lemma}
\noindent We are now collecting the first final regularities, and in our case even a little more, to gradually show the global existence.
\begin{lemma}\label{MassenAbsch3eps}
    Suppose that $\tme<\infty$ for some $\eps\in(0,1)$. Then one can find $C(\eps)>0$ such that
    \bea{tt11}
    \|\Theta_\eps(\cdot,t)\|_{W^{2,2}(\Omega)}\le C(\eps)\qquad\text{for all }t\in(0,\tme).
    \eea
    \begin{proof}
        Prior to embarking upon the proof, it is necessary to verify that $\Theta_{\eps zzz}=0$ on $\partial\Om$ for all $t\in(0,\tme)$. By the third equation of \eqref{approx} we directly obtain that
        \bea{thetarand}
        b_{\eps z}\Theta_{\eps t}+b_\eps\Theta_{\eps zt}&=&k_\eps\Theta_{\eps zzz}+2k_{\eps z}\Theta_{\eps zz}+k_{\eps zz}\Theta_{\eps z}+\Geps'(\Theta_\eps)\Theta_{\eps z}v_{\eps z}^2+2\Geps(\Theta_\eps)v_{\eps z}v_{\eps zz}\nonumber\\
        &&-\beta\Theta_{\eps z}v_{\eps z}-\beta\Theta_\eps v_{\eps zz}\qquad\text{in }\partial\Omega\times(0,\tme).
        \eea
        However, due to $v_{\eps zz}=\Theta_{\eps z}=0$ on $\partial\Om$ for all $t\in(0,\tme)$ and to $k_\eps>0$ in $\overline\Om$ for all $t\in(0,\tme)$, because of \eqref{ab4}, and with the assistance of \eqref{ab6} we can directly verify that all terms, with the exception of $k_\eps\Theta_{\eps zzz}$, are equivalent to zero, which means that the additional boundary condition follows.\\
        Based on the Gagliardo-Nirenberg inequality, we can find $c_1>0$ such that
        \bea{GNt1}
        \|\varphi\|_{L^6(\Omega)}^6\le c_1\|\varphi_z\|_{L^2(\Omega)}^2\|\varphi\|_{L^2(\Omega)}^4+c_1\|\varphi\|_{L^2(\Omega)}^6\qquad\text{for all }\varphi\in W^{1,2}(\Omega)
        \eea
        and
        \bea{GNt2}
        \|\varphi_z\|_{L^3(\Omega)}^3\le c_1\|\varphi_{zz}\|_{L^2(\Omega)}^\frac{7}{4}\|\varphi\|_{L^2(\Omega)}^\frac{5}{4}+c_1\|\varphi\|_{L^2(\Omega)}^3\qquad\text{for all }\varphi\in W^{2,2}(\Omega).
        \eea
        We also note, that $\big|\frac{b_{\eps z}}{b_\eps^2}|$ is bounded in $\Om\times(0,\tme)$ and once more that we can find $c_2=c_2(\eps)>0$ and $c_3=c_3(\eps)>0$ fulfilling
        $$c_2\le k_\eps(z,t)\le c_3\quad\text{and}\quad c_2\le b_{\eps}(z,t)\le c_3\quad\text{for all }(z,t)\in\Om\times(0,\tme),$$
        due to the assumption that $\tme$ is finite.
        Testing the third equation of \eqref{approx} with $\Theta_{\eps zzzz}$ and integrating by parts with respect to $\Theta_{\eps zzz}=0$ on $\partial\Om$ for all $t\in(0,\tme)$ gives 
        \bea{tt12}
        \frac{1}{2}\frac{d}{dt}\io \Theta_{\eps zz}^2
        &=& -\io \Big\{\frac{1}{b_\eps}\big((k_\eps \Theta_{\eps z})_{z}+\Geps(\Theta_\eps)v_{\eps z}^2-\beta\Theta_{\eps}v_{\eps z}\big)\Big\}_{z}\cdot\Theta_{\eps zzz}\nonumber\\
        &=& -\io \frac{k_\eps}{b_\eps}\Theta_{\eps zzz}^2-\io  \Big(2\frac{k_{\eps z}}{b_\eps}-\frac{b_{\eps z}k_\eps}{b_\eps^2}\Big)\Theta_{\eps zz}\Theta_{\eps zzz}-\io \Big(\frac{k_{\eps zz}}{b_\eps}-\frac{b_{\eps z}k_{\eps z}}{b_\eps^2}\Big)\Theta_{\eps z}\Theta_{\eps zzz}\nonumber\\
        &&-\io \frac{\Geps'(\Theta_\eps)}{b_\eps}\Theta_{\eps z}v_{\eps z}^2\Theta_{\eps zzz}-2\io \frac{\Geps(\Theta_\eps)}{b_\eps}v_{\eps z}v_{\eps zz}\Theta_{\eps zzz}+\io\frac{b_{\eps z}\Geps(\Theta_\eps)}{b_\eps^2}v_{\eps z}^2\Theta_{\eps zzz}\nonumber\\
        &&+\io\frac{\beta}{b_\eps} \Theta_{\eps z}v_{\eps z}\Theta_{\eps zzz}+\io\frac{\beta}{b_\eps} \Theta_{\eps}v_{\eps zz}\Theta_{\eps zzz}-\io\frac{b_{\eps z}\beta}{b_\eps^2}\Theta_\eps v_{\eps z}\Theta_{\eps zzz}
        \eea
        for all $t\in(0,\tme)$, where Young's inequality gives us $c_4=c_4(\eps)>0$ and $c_5=c_5(\eps)>0$ such that
        \bea{tt13}
        &&\hspace{-10mm}
        -\io  \Big(2\frac{k_{\eps z}}{b_\eps}-\frac{b_{\eps z}k_\eps}{b_\eps^2}\Big)\Theta_{\eps zz}\Theta_{\eps zzz}-\io \Big(\frac{k_{\eps zz}}{b_\eps}-\frac{b_{\eps z}k_{\eps z}}{b_\eps^2}\Big)\Theta_{\eps z}\Theta_{\eps zzz}\nonumber\\
        &\le& \frac{c_2}{8c_3}\io\Theta_{\eps zzz}^2+c_4\io \Theta_{\eps zz}^2+\frac{c_2}{8c_3}\io\Theta_{\eps zzz}^2+c_4\io \Theta_{\eps z}^2\nonumber\\
        &\le& \frac{c_2}{4c_3}\io\Theta_{\eps zzz}^2+c_5\io \Theta_{\eps zz}^2+c_5\io \Theta_{\eps z}^6+c_5\quad\text{for all }t\in(0,\tme)
        \eea
        and
        \bea{tt14}
        &&\hspace{-10mm}
        -\io \frac{\Geps'(\Theta_\eps)}{b_\eps}\Theta_{\eps z}v_{\eps z}^2\Theta_{\eps zzz}-2\io \frac{\Geps(\Theta_\eps)}{b_\eps}v_{\eps z}v_{\eps zz}\Theta_{\eps zzz}+\io\frac{b_{\eps z}\Geps(\Theta_\eps)}{b_\eps^2}v_{\eps z}^2\Theta_{\eps zzz}\nonumber\\
        &\le& \frac{c_2}{4c_3}\io \Theta_{\eps zzz}^2+c_4\io \Theta_{\eps z}^2v_{\eps z}^4+c_4\io v_{\eps z}^2v_{\eps zz}^2+c_4\io v_{\eps z}^4\hspace{-10mm}\nonumber\\
        &\le& \frac{c_2}{4c_3}\io \Theta_{\eps zzz}^2+c_5\io \Theta_{\eps z}^6+c_5\io v_{\eps z}^6+c_5\io v_{\eps zz}^3+c_5\quad\text{for all }t\in(0,\tme)
        \eea
        as well as
        \bea{tt15}
        &&\hspace{-10mm}
        \io\frac{\beta}{b_\eps} \Theta_{\eps z}v_{\eps z}\Theta_{\eps zzz}+\io\frac{\beta}{b_\eps} \Theta_{\eps}v_{\eps zz}\Theta_{\eps zzz}-\io\frac{b_{\eps z}\beta}{b_\eps^2}\Theta_\eps v_{\eps z}\Theta_{\eps zzz}\nonumber\\
        &\le& \frac{c_2}{4c_3}\io\Theta_{\eps zzz}^2+c_4\io \Theta_{\eps z}^2v_{\eps z}^2+c_4\io \Theta_{\eps}^2v_{\eps zz}^2+c_4\io \Theta_{\eps}^2v_{\eps z}^2\hspace{-10mm}\nonumber\\
        &\le&\frac{c_2}{4c_3} \io\Theta_{\eps zzz}^2+c_5\io\Theta_{\eps z}^6+c_5\io v_{\eps z}^6+c_5\io \Theta_\eps^6+c_5\io v_{\eps zz}^3+c_5
        \eea
        for all $t\in(0,\tme)$. From combining \eqref{tt12}-\eqref{tt15}, we immediately get the following:
        \bea{tt16}
        &&\hspace{-10mm}
        \frac{d}{dt}\io \Theta_{\eps zz}^2+\frac{1}{2}\io \frac{\keps}{b_\eps}\Theta_{\eps zzz}^2\nonumber\\
        &\le& c_6\io \Theta_{\eps zz}^2+c_6\io \Theta_{\eps z}^6+c_6\io\Theta_\eps^6+c_6\io v_{\eps zz}^3+c_6\io v_{\eps z}^6+c_6
        \eea
        for all $t\in(0,\tme)$, with $c_6\equiv c_6(\eps):=3c_5$. Due to \eqref{GNt1}, \eqref{GNt2}, \eqref{tt0}, \eqref{tt1} and \eqref{tt10.1}, we can find $c_7=c_7(\eps)>0$ such that
        \bea{tt17}
        c_6\io \Theta_{\eps z}^6\le c_1c_6\io \Theta_{\eps zz}^2\Big\{\io \Theta_{\eps z}^2\Big\}^2+c_1c_6\Big\{\io \Theta_{\eps z}^2\Big\}^3 \le c_7\io\Theta_{\eps zz}^2+c_7
        \eea
        and
        \bea{tt18}
        c_6\io \Theta_{\eps }^6\le c_1c_6\io \Theta_{\eps z}^2\Big\{\io \Theta_{\eps }^2\Big\}^2+c_1c_6\Big\{\io \Theta_{\eps }^2\Big\}^3 \le c_7
        \eea
        as well as
        \bea{tt19}
        c_6\io v_{\eps zz}^3\le c_1c_6\|v_{\eps zzz}\|_{L^2(\Om)}^\frac{7}{4}\|v_{\eps z}\|_{L^2(\Om)}^\frac{5}{4}+c_1c_6\|v_{\eps z}\|_{L^2(\Om)}^3\le c_7\io v_{\eps zzz}^2+c_7
        \eea
        and
        \bea{tt20}
        c_6\io v_{\eps z}^6\le c_1c_6\io v_{\eps zz}^2\Big\{\io v_{\eps z}^2\Big\}^2+c_1c_6\Big\{\io v_{\eps z}^2\Big\}^3 \le c_7\io v_{\eps zz}^2+c_7
        \eea
        for all $t\in(0,\tme)$. It is evident that the combination of \eqref{tt16}-\eqref{tt20}, when integrated over time, results in
        \bea{tt21}
        &&\hspace{-10mm}
        \io \Theta_{\eps zz}^2+\frac{1}{2}\int_0^t\io \frac{\keps}{b_\eps}\Theta_{\eps zzz}^2\nonumber\\
        &\le& (c_6+c_7)\int_0^t\io \Theta_{\eps zz}^2+c_7\int_0^t\io v_{\eps zzz}^2+c_7\int_0^t\io v_{\eps zz}^2+(c_6+4c_7)t+\io \Theta_{0\eps zz}^2\nonumber
        \eea
        for all $t\in(0,\tme)$. Now recalling \eqref{t8}, \eqref{tt3} and \eqref{tt102} we directly obtain \eqref{tt11}.
    \end{proof}
\end{lemma}
\noindent It can thus be concluded that this is sufficient to ensure bounds for $v_\eps$.
\begin{lemma}\label{lem3.3ref}
    If $\eps\in(0,1)$ is such that $\tme<\infty$, then there exists $C(\eps)>0$ fulfilling
    \bea{tt31}
    \|v_\eps(\cdot,t)\|_{W^{3,2}(\Omega)}\le C(\eps)\qquad\text{for all }t\in(0,\tme).
    \eea
    \begin{proof}
    First we need another boundary condition. By the first equation of \eqref{approx} we obtain that
    \bea{t-1}
    \peps v_{\eps t}=-\eps v_{\eps zzzz}+(\Geps(\Theta)v_{\eps z})_z+(p_{\eps}u_{\eps z})_z-\beta\Theta_{\eps z}
    \eea
    in $\partial\Om\times(0,\tme)$. Due to our boundary assumptions in \eqref{approx}, \eqref{ab7} and $u_{\eps zz}=0$ on $\partial\Omega$ for all $t\in(0,\tme)$, and analogous to the previous lemma, all terms except of $\eps v_{\eps zzzz}$ vanish. So $v_{\eps zzzz}=0$ on $\partial\Omega\times(0,\tme)$.\\
    Based on the Gagliardo-Nirenberg inequality, we can find $c_1>0$ such that
    \bea{t-GN1}
    \|\varphi_z\|_{L^2(\Om)}^2\le c_1\|\varphi_{zz}\|_{L^2(\Om)}\|\varphi\|_{L^2(\Om)}+c_1\|\varphi\|_{L^2(\Om)}^2\qquad\text{for all }\varphi\in W^{2,2}(\Om).
    \eea
    Due to $\tme<\infty$, we can also find $c_2=c_2(\eps)>0$ so that
    \bea{t-9}
    \Big|\frac{\rho_{\eps z}(z,t)}{\peps^2(z,t)}\Big|\leq c_2\quad\text{and}\quad \rho_\eps(z,t)\le c_2\qquad\text{for all }(z,t)\in\Om\times(0,\tme).
    \eea
    We let
    \bea{t-9.1}
    \hspace{5mm}h_\eps:=\Geps(\Theta_\eps)v_{\eps zz}+\Geps'(\Theta_\eps)\Theta_{\eps z}v_{\eps z}+p_{\eps}u_{\eps zz}+p_{\eps z}u_{\eps z}-\beta\Theta_{\eps z}\qquad\text{for all }t\in(0,\tme)
    \eea
    and observe, that Lemma \ref{MassenAbsch3eps}, together with the continuity of the embedding $W^{2,2}(\Om)\hookrightarrow W^{1,\infty}(\Om)$ and $\tme<\infty$ ensure the existence of $c_3=c_3(\eps)>0$ satisfying
        \bea{tt35}
        \|h_\eps\|_{L^2(\Om)}\le c_3\big\{\|v_{\eps zz}\|_{L^2(\Om)}+\|v_{\eps z}\|_{L^2}+\|u_{\eps zz}\|_{L^2(\Om)}+\|u_{\eps z}\|_{L^2(\Om)}+1\big\}\qquad\text{for all }t\in(0,\tme).\nonumber
        \eea
        Because of \eqref{t6} and Lemma \ref{MassenAbsch2eps} we can find $c_4=c_4(\eps)>0$ to further estimate the above to
        \bea{tt35.5}
        \|h_\eps\|_{L^2(\Om)}\le c_3\|v_{\eps zz}\|_{L^2(\Om)}+c_4\qquad\text{for all }t\in(0,\tme).
        \eea
    With \eqref{t-GN1} and once again Lemma \ref{MassenAbsch2eps} we can find $c_5=c_5(\eps)>0$ to get
    \bea{t-9.2}
    \|h_\eps\|_{L^2(\Om)}^2\le c_5\|v_{\eps zzz}\|_{L^2(\Om)}^2+c_5\qquad\text{for all }t\in(0,\tme).
    \eea
    Analogue, again with the help of Lemma \ref{MassenAbsch3eps}, together with the continuity of the embedding $W^{2,2}(\Om)\hookrightarrow W^{1,\infty}(\Om)$, we can also estimate $h_{\eps z}$ in $L^2(\Om)$. It obviously follows that a $c_6=c_6(\eps)>0$ exists, so that
    \bea{t-9.3}
    \|h_{\eps z}\|_{L^2(\Om)}\le c_6\big\{\|v_{\eps zz}\|_{L^2(\Om)}+\|v_{\eps zzz}\|_{L^2(\Om)}+\|v_{\eps z}\|_{L^2(\Om)}+\|\Theta_{\eps zz}v_{\eps z}\|_{L^2(\Om)}\nn\\
    +\|u_{\eps zzz}\|_{L^2(\Om)}+\|u_{\eps zz}\|_{L^2(\Om)}+\|u_{\eps z}\|_{L^2(\Om)}+1\big\}
    \eea
    for all $t\in(0,\tme)$. So we can continue to estimate here like before and find $c_7=c_7(\eps)>0$ such that
    \bea{t-9.4}
    \|h_{\eps z}\|_{L^2(\Om)}^2\le c_7\|v_{\eps zzz}\|_{L^2(\Om)}^2+c_7\|u_{\eps zzz}\|_{L^2(\Om)}^2+c_7\qquad\text{for all }t\in(0,\tme).
    \eea
    By direct derivation and integrating by parts we obtain
    \bea{t-10}
    \frac{1}{2}\frac{d}{dt}\io v_{\eps zzz}^2&=&\io v_{\eps tz}\partial_z^5v_{\eps }\nn\\
    &=& -\eps\io \Big(\frac{v_{\eps zzzz}}{\peps}\Big)_z\partial_z^5 v_\eps +\io\Big(\frac{h_\eps}{\peps}\Big)_z\partial_z^5 v_\eps\qquad\text{for all }t\in(0,\tme)
    \eea
    where we can estimate
    \bea{t-11}
    -\eps\io \Big(\frac{v_{\eps zzzz}}{\peps}\Big)_z\partial_z^5 v_\eps&=&-\eps\io\frac{1}{\peps}(\partial_z^5 v_\eps)^2+\eps\io \frac{\rho_{\eps z}}{\peps^2}v_{\eps zzzz}\partial_z^5 v_\eps\nn\\
    &\le& -\frac{\eps}{c_2}\io(\partial_z^5 v_\eps)^2+\frac{\eps}{4c_2}\io(\partial_z^5 v_\eps)^2+\eps c_2^3\io v_{\eps zzzz}^2\nn\\
    &\le&-\frac{3\eps}{4c_2}\io(\partial_z^5 v_\eps)^2+\eps c_1 c_2^3\Big\{\|\partial_z^5 v_\eps\|_{L^2(\Om)}\|v_{\eps zzz}\|_{L^2(\Om)}+\|v_{\eps zzz}\|_{L^2(\Om)}^2\Big\}\nn\\
    &\le&-\frac{\eps}{2c_2}\io(\partial_z^5 v_\eps)^2+c_8\io v_{\eps zzz}^2
    \eea
    for all $t\in(0,\tme)$ and
    \bea{t-12}
    \io\Big(\frac{h_\eps}{\peps}\Big)_z\partial_z^5 v_\eps&=&\io \frac{h_{\eps z}}{\peps}\partial_z^5 v_\eps-\io\frac{h_\eps \rho_{\eps z}}{\peps^2}\partial_z^5v_\eps\nn\\
    &\le& \frac{\eps}{2c_2}\io(\partial_z^5 v_\eps)^2+c_8\io h_{\eps z}^2+c_8\io h_{\eps}\qquad\text{for all }t\in(0,\tme)
    \eea
    with some $c_8=c_8(\eps)>0$. Combining \eqref{t-9.2}, \eqref{t-9.4}, \eqref{t-10}, \eqref{t-11} and \eqref{t-12} results in
    \bea{t-13}
    \frac{1}{2}\frac{d}{dt}\io v_{\eps zzz}^2\le c_9\io v_{\eps zzz}^2+c_9\io u_{\eps zzz}^2+c_9\qquad\text{for all }t\in(0,\tme)
    \eea
    with $c_9:=c_8(c_5+c_7+1)$. With a simple integration over time and with \eqref{tt3} our statement follows.
    \end{proof}
\end{lemma}

\noindent In conclusion, it can be positioned that the aforementioned approximate solutions to \eqref{approx} can in fact be extended so as to exist globally:
\begin{lemma}
For each $\eps\in(0,1)$, the solution of \eqref{approx} is global in time; that is, in Lemma \ref{2.1Ref} we have $\tme=\infty$.
\begin{proof}
    As $W^{2,2}(\Om)\hookrightarrow W^{\sigma,\infty}(\Om)$ whenever $\sigma\in(1,\frac{3}{2})$ and $W^{3,2}(\Om)\hookrightarrow W^{\delta,\infty}(\Om)$ whenever $\delta\in(2,\frac{5} {2})$ (\cite{henry1981geometric}), Lemma \ref{lem3.3ref} in conjunction with \eqref{tt2} and Lemma \ref{MassenAbsch3eps} shows that if $\tme$ was finite for some $\eps\in(0,1)$, then \eqref{ae2} would be violated. The claim thus results from Lemma \ref{2.1Ref}.
\end{proof}
\end{lemma}
\subsection{Further $\eps$-independent estimates. Construction of a limit triple}
\label{sec3.3}
Next addressing a key issue related to the construction of limit objects $v,\ u$ and $\Theta$ through an appropriate extraction of subsequences of solutions to \eqref{approx}. In order to achieve this objective, the initial aim is to establish a statement analogous to \eqref{tt0}, albeit this time independent of $\eps$. This is accomplished by first examining the functional $\io (\Theta_\eps+1)^p$ in their concave range when $p\in(0,1)$. We thereby obtain the following boundedness property of the quantities accordingly dissipated due to diffusion:

\begin{lemma}\label{ThetaGemischt}
   For any $p\in(0,1)$ and any $T>0$ there exists $C(p,T)>0$ such that
\bea{ThetaGemischt1}
\int_0^T \io (\Theta_\eps+1)^{p-2} \Theta_{\eps z}^2 \le C(p,T) \qquad \mbox{ for all } \eps \in (0,1).
\eea
\end{lemma}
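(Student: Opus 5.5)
Test the third equation of \eqref{approx} with $(\Theta_\eps+1)^{p-1}$ and work with the concave functional $\io b_\eps(\Theta_\eps+1)^p$. Since $\Theta_{\eps z}=0$ on $\partial\Om$, an integration by parts gives, for all $t>0$,
\begin{align*}
\frac1p\frac{d}{dt}\io b_\eps(\Theta_\eps+1)^p
&=\frac1p\io b_{\eps t}(\Theta_\eps+1)^p+(1-p)\io k_\eps(\Theta_\eps+1)^{p-2}\Theta_{\eps z}^2\\
&\quad+\io\Geps(\Theta_\eps)(\Theta_\eps+1)^{p-1}v_{\eps z}^2-\beta\io\Theta_\eps(\Theta_\eps+1)^{p-1}v_{\eps z}.
\end{align*}
Since $1-p>0$, the diffusion term enters with a favorable sign, and the term with $\Geps v_{\eps z}^2$ is nonnegative and may be dropped. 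Rearranging gives
\[
(1-p)\io k_\eps(\Theta_\eps+1)^{p-2}\Theta_{\eps z}^2\le\frac1p\frac{d}{dt}\io b_\eps(\Theta_\eps+1)^p-\frac1p\io b_{\eps t}(\Theta_\eps+1)^p+\beta\io\Theta_\eps(\Theta_\eps+1)^{p-1}v_{\eps z}.
\]
By \eqref{ab4} and the positivity and continuity of $k$ on $\bom\times[0,T]$, we have $k_\eps\ge c_1(T)>0$ uniformly in $\eps$, so the left-hand side controls the quantity in \eqref{ThetaGemischt1}.

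Integrate this over $(0,T)$ and bound each term on the right uniformly in $\eps$. Since $(\Theta_\eps+1)^p\le\Theta_\eps+1$, the bounds \eqref{ab3}, \eqref{KGZ_B} and \eqref{t7} control $\io b_\eps(\Theta_\eps+1)^p$ at time $T$ and the integral of $|b_{\eps t}|(\Theta_\eps+1)^p$. The value at $t=0$ is nonnegative and therefore harmless, because it appears with a minus sign after integration.

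The main obstacle is the coupling term $\beta\int_0^T\io\Theta_\eps(\Theta_\eps+1)^{p-1}v_{\eps z}$. Use $\Theta_\eps(\Theta_\eps+1)^{p-1}\le(\Theta_\eps+1)^p$ and Young's inequality to split it into $\int_0^T\io v_{\eps z}^2$ plus $\int_0^T\io(\Theta_\eps+1)^{2p}$. To obtain an $\eps$-independent bound on $\int_0^T\io v_{\eps z}^2$, integrate \eqref{vepsueps} in time and use $\Geps\ge c_\Gamma$ from \eqref{ab1}. That identity itself contains $\beta\io\Theta_\eps v_{\eps z}$, which is absorbed by $\frac{c_\Gamma}{2}\io v_{\eps z}^2$ at the cost of $C\io\Theta_\eps^2$. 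Only $L^1$ control of $\Theta_\eps$ is available at this stage, so this absorption is not closed directly.

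The cleaner route is to exploit the good sign in the combined energy of Lemma \ref{energy1Lem}. There, adding the temperature equation cancels the term $\io\Gamma_\eps v_{\eps z}^2$ rather than producing dissipation, so that lemma alone gives no $L^2$ control of $v_{\eps z}$. I would therefore instead add a small multiple of \eqref{vepsueps} to the $(\Theta_\eps+1)^p$ identity. Concretely, keep $\io\Geps(\Theta_\eps+1)^{p-1}v_{\eps z}^2$ with its positive sign on the right and handle the coupling as follows:
\[
\beta\io\Theta_\eps(\Theta_\eps+1)^{p-1}|v_{\eps z}|\le\frac{c_\Gamma}{2}\io(\Theta_\eps+1)^{p-1}v_{\eps z}^2+C\io(\Theta_\eps+1)^{p+1}.
\]
The resulting term $\io(\Theta_\eps+1)^{p+1}$ is then interpolated by Gagliardo--Nirenberg. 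Write it as $\|w\|_{L^{2(p+1)/p}}^{2(p+1)/p}$ with $w=(\Theta_\eps+1)^{p/2}$, and note that $\|w_z\|_{L^2}^2$ is comparable to the dissipated quantity while $\|w\|_{L^{2/p}}$ is bounded by \eqref{t7}. The exponent works out subcritically in one dimension, giving
\[
\io(\Theta_\eps+1)^{p+1}\le\eta\io(\Theta_\eps+1)^{p-2}\Theta_{\eps z}^2+C_\eta .
\]
Choosing $\eta$ small absorbs this into the left-hand side, and the remaining $v_{\eps z}^2$-type terms are controlled by the mechanical energy $\io\rho_\eps v_\eps^2+\io p_\eps u_{\eps z}^2$ together with \eqref{t5}--\eqref{t6}. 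The step I expect to be delicate is exactly this bookkeeping between the weight $(\Theta_\eps+1)^{p-1}\le1$ and the dissipation $\io\Geps v_{\eps z}^2$ from \eqref{vepsueps}. I would combine $\frac1p\io b_\eps(\Theta_\eps+1)^p$ with the energy functional of Lemma \ref{vepsuepslem}, using $(\Theta_\eps+1)^{p-1}\le1$ so that the $\Geps v_{\eps z}^2$ term from the entropy-like identity is dominated by the dissipation $\io\Geps v_{\eps z}^2$ of \eqref{vepsueps}. The leftover $\beta\io\Theta_\eps v_{\eps z}$ is then treated with the same Gagliardo--Nirenberg absorption, and a Gronwall argument on $(0,T)$ concludes the proof.
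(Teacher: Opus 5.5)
Your ``Concretely'' step is the paper's proof. You keep $\io\Gamma_\eps(\Theta_\eps)(\Theta_\eps+1)^{p-1}v_{\eps z}^2$ on the dissipative side next to $(1-p)\io k_\eps(\Theta_\eps+1)^{p-2}\Theta_{\eps z}^2$. You use \eqref{ab1} and Young's inequality to absorb $\beta\io\Theta_\eps(\Theta_\eps+1)^{p-1}v_{\eps z}$ into it at the cost of $C\io(\Theta_\eps+1)^{p+1}$. You then interpolate the latter by Gagliardo--Nirenberg applied to $w=(\Theta_\eps+1)^{p/2}$. Here $\|w\|_{L^{2/p}(\Om)}^{2/p}=\io(\Theta_\eps+1)$ is bounded by \eqref{t7}, and the power of $\|w_z\|_{L^2(\Om)}$ is $\frac{2p}{p+1}<2$. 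With your handling of the endpoint and $b_{\eps t}$ terms, a plain integration over $(0,T)$ closes the argument. No Gronwall step, no use of \eqref{vepsueps} and no bound on $v_{\eps z}$ are needed. You must not drop the $\Gamma_\eps$-term as in your first paragraph, since it is what absorbs the coupling.

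Your closing paragraph, however, rests on a sign misreading and would fail if carried out. In your identity, the diffusion term and $\io\Gamma_\eps(\Theta_\eps)(\Theta_\eps+1)^{p-1}v_{\eps z}^2$ carry the same sign. So they are either both dissipative (for the functional $-\frac1p\io b_\eps(\Theta_\eps+1)^p$) or both sources (for $+\frac1p\io b_\eps(\Theta_\eps+1)^p$). Suppose you add $+\frac1p\io b_\eps(\Theta_\eps+1)^p$ to the mechanical energy, so that the weighted $\Gamma_\eps$-term is dominated by $\io\Gamma_\eps(\Theta_\eps)v_{\eps z}^2$ via $(\Theta_\eps+1)^{p-1}\le 1$, as in the cancellation of Lemma \ref{energy1Lem}. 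Then $(1-p)\io k_\eps(\Theta_\eps+1)^{p-2}\Theta_{\eps z}^2$ also becomes a source, and the quantity in \eqref{ThetaGemischt1} is lost. Moreover, after the absorption there are no ``remaining $v_{\eps z}^2$-type terms''. Also, \eqref{t5}--\eqref{t6} control $v_\eps$ and $u_{\eps z}$, not $v_{\eps z}$. If you want to couple with \eqref{vepsueps} at all, the correct functional is $\frac12\io\rho_\eps v_\eps^2+\frac12\io p_\eps u_{\eps z}^2-\frac1p\io b_\eps(\Theta_\eps+1)^p$. Then $\beta\io\Theta_\eps v_{\eps z}$ costs $C\io(\Theta_\eps+1)^2$, which the same interpolation absorbs since $2<p+2$. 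This would yield Lemma \ref{4.5ref} at the same time, but it is superfluous for the present lemma.
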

\begin{proof}
    Due to the Gagliardo-Nirenberg inequality we can find $c_1=c_1(p)>0$ such that
    \bea{GN1}
    \big\|\varphi\big\|_{L^\frac{2(p+1)}{p}(\Omega)}^\frac{2(p+1)}{p}\le c_1\big\|\varphi_z\big\|_{L^2(\Omega)}^{\frac{2p}{p+1}}\big\|\varphi\big\|_{L^{\frac{2}{p}}(\Omega)}^\frac{4p+2}{p(p+1)}+c_1\big\|\varphi\big\|_{L^\frac{2}{p}(\Omega)}^\frac{2(p+1)}{p}\qquad\text{for all }\varphi\in W^{1,2}(\Omega)
    \eea
    and due to \eqref{ab3} and Assumptions \ref{vorrfunc} we can find $c_2=c_2(T)>0$ such that
    $$c_2\le k_\eps(z,t)\qquad\text{for all }(z,t)\in\Om\times(0,T).$$
    We once again use the third equation in \eqref{approx} along with the homogeneous Neumann boundary conditions for $\Theta_\eps$ to see on the basis of an integration by parts that
    \begin{align}\label{tz1}
        -\frac{1}{p}\frac{d}{dt}\io b_\eps(\Theta_\eps+1)^p&=-\io (\Theta_\eps+1)^{p-1}\cdot\big\{(k_\eps\Theta_{\eps z})_z+\Geps(\Theta_\eps) v_{\eps z}^2-\beta\Theta_\eps v_{\eps z}\big\}\nn\\
        &\quad -\frac{1}{p}\io b_{\eps t}(\Theta_\eps+1)^p\hspace{-5mm}\nonumber\\
        &=-(1-p)\io k_\eps(\Theta_\eps+1)^{p-2}\Theta_{\eps z}^2-\io(\Theta_{\eps}+1)^{p-1}\Geps(\Theta_\eps) v_{\eps z}^2\nonumber\\
        &\ +\beta\io (\Theta_\eps+1)^{p-1}\Theta_\eps v_{\eps z}-\frac{1}{p}\io b_{\eps t}(\Theta_\eps+1)^p\hspace{-5mm}
    \end{align}
    for all $t>0$ and $\eps\in(0,1)$, where using \eqref{ab1} we can estimate
    \bea{tz2}
    \io(\Theta_\eps+1)^{p-1}\Geps(\Theta_\eps)v_{\eps z}^2\ge c_\Gamma\io (\Theta_\eps+1)^{p-1}v_{\eps z}^2\qquad\text{for all }t>0\text{ and }\eps\in(0,1).
    \eea
    Due to \eqref{t7} we can find $c_3=c_3(p,T)>0$ such that with Young's inequality and again \eqref{ab1}, as well as by \eqref{GN1} we obtain
    \begin{align}\label{tz3}
    \beta\io (\Theta_\eps+1)^{p-1}\Theta_\eps v_{\eps z}&\le \beta\io (\Theta_\eps+1)^p |v_{\eps z}| \nn\\
    &\le c_\Gamma\io(\Theta_\eps+1)^{p-1}v_{\eps z}^2 +\frac{\beta^2}{4c_\Gamma}\Big\|(\Theta_\eps+1)^{\frac{p}{2}}\Big\|_{L^\frac{2(p+1)}{p}(\Omega)}^\frac{2(p+1)}{p}\nonumber\\
    &\le c_\Gamma\io(\Theta_\eps+1)^{p-1}v_{\eps z}^2 +\frac{\beta^2 c_1}{4c_\Gamma}\Big\|\frac{p}{2}(\Theta_\eps+1)^{\frac{p-2}{2}}\Theta_{\eps z}\Big\|_{L^2(\Omega)}^\frac{2p}{p+1}\Big\|(\Theta_\eps+1)^\frac{p}{2}\Big\|_{L^\frac{2}{p}(\Omega)}^\frac{4p+2}{p(p+1)}\nonumber\\
    & \ +\frac{\beta^2 c_1}{4c_\Gamma}\Big\|(\Theta_\eps+1)^\frac{p}{2}\Big\|_{L^\frac{2}{p}(\Omega)}^\frac{2(p+1)}{p}\nonumber\\
    &\le c_\Gamma\io(\Theta_\eps+1)^{p-1}v_{\eps z}^2+c_3\Big\|(\Theta_\eps+1)^{\frac{p-2}{2}}\Theta_{\eps z}\Big\|_{L^2(\Omega)}^\frac{2p}{p+1}+c_3
    \end{align}
    for all $t\in(0,T)$ and $\eps\in(0,1)$. As $2p<2(p+1)$, Young's inequality yields
    \bea{tz4}
    c_3\Big\|(\Theta_\eps+1)^{\frac{p-2}{2}}\Theta_{\eps z}\Big\|_{L^2(\Omega)}^\frac{2p}{p+1}\le \frac{c_2(1-p)}{2}\io(\Theta_\eps+1)^{p-2}\Theta_{\eps z}^2+c_4
    \eea
    for all $t\in(0,T)$ and $\eps\in(0,1)$, with a sufficiently large $c_4=c_4(p,T)>0$. When combined with \eqref{tz1}-\eqref{tz3}, this shows that
    \bea{tz5}
    \hspace{5mm}-\frac{1}{p}\frac{d}{dt}\io b_\eps(\Theta_\eps+1)^p+\frac{c_2(1-p)}{2}\io(\Theta_\eps+1)^{p-2}\Theta_{\eps z}^2\le c_3+c_4+\frac{\|b_{\eps t}\|_{L^\infty(\Omega)}}{p}\io(\Theta_\eps+1)^p\hspace{-10mm}
    \eea
    for all $t\in(0,T)$ and $\eps\in(0,1)$ and that thus
    \bea{tz6}
    \hspace{5mm}\frac{c_2(1-p)}{2}\int_0^T\io(\Theta_\eps+1)^{p-2}\Theta_{\eps z}^2\le c_5\io (\Theta_\eps(\cdot,T)+1)^p+c_5T+c_5\int_0^T\io (\Theta_\eps+1)^p
    \eea
    for all $T>0$ and $\eps\in(0,1)$ with $$c_5\equiv c_5(p,T):=\max\left\{\frac{\|b_{\eps}\|_{L^\infty((0,T)\times\Omega)}}p,c_3+c_4,\frac{\|b_{\eps t}\|_{L^\infty((0,T)\times\Omega)}}p\right\}.$$ Again using that $p<1$ in estimating
    \bea{tz7}
    &&\hspace{-10mm}
    c_5\io (\Theta_\eps(\cdot,T)+1)^p+c_5\int_0^T\io (\Theta_\eps+1)^p\nonumber\\
    &\le& c_5\cdot\bigg\{\io(\Theta_\eps(\cdot,T)+1)\bigg\}^p\cdot|\Omega|^{1-p}+c_5\cdot\int_0^T\bigg\{\io(\Theta_\eps+1)\bigg\}^p\cdot|\Omega|^{1-p}\quad\text{for all }\eps\in(0,1)\hspace{-5mm}
    \eea
    by the Hölder inequality, in view of \eqref{t7} we obtain \eqref{ThetaGemischt1} from \eqref{tz6} and \eqref{tz7}.
\end{proof}

\noindent Firstly, this suggests estimates for the temperature variable in some Lebesgue spaces, where the summability powers are conveniently large.

\begin{lemma}\label{ThetaHochq}
 For any $q\in (0,3)$ and any $T>0$ there exists $C(q,T)>0$ such that
 \bea{ThetaHochq1}
\int_0^T \io (\Theta_\eps+1)^q \le C(q,T) \qquad\mbox{ for all }\eps\in(0,1).
 \eea
 \begin{proof}
    Again due to the Gagliardo-Nirenberg inequality we can find $c_1=c_1(q)>0$ such that
    \bea{GN2}
    \big\|\varphi\big\|_{L^\frac{2q}{q-2}(\Omega)}^\frac{2q}{q-2}\le c_1\big\|\varphi_z\big\|_{L^2(\Omega)}^2\|\varphi\|_{L^\frac{2}{q-2}(\Omega)}^\frac{4}{q-2}+c_1\big\|\varphi\big\|_{L^\frac{2}{q-2}(\Omega)}^\frac{2q}{q-2}\qquad\text{for all }\varphi\in W^{1,2}(\Omega).
    \eea
    Due to \eqref{t7} we can find $c_2=c_2(q,T)>0$ such that
    \bea{tz9}
    \Big\|(\Theta_\eps+1)^\frac{q-2}{2}\Big\|_{L^\frac{2}{q-2}(\Omega)}^\frac{2}{q-2}=\io (\Theta_\eps+1)\le c_2\qquad\text{for all }t\in(0,T)\text{ and }\eps\in(0,1).
    \eea
     Without loss of generality assuming that $q\in(2,3)$, we apply Lemma \ref{ThetaGemischt} to $p:=q-2\in(0,1)$ to find $c_3=c_3(q,T)>0$ such that
     \bea{tz10}
    \int_0^T\io (\Theta_\eps+1)^{q-4}\Theta_{\eps z}^2\le c_3\qquad\text{for all }\eps\in(0,1).
\nn     \eea
     With \eqref{GN2} and \eqref{tz9} we obtain
     \bea{tz11}
    \io (\Theta_\eps+1)^q=\Big\|(\Theta_\eps+1)^\frac{q-2}{2}\Big\|_{L^\frac{2q}{q-2}(\Omega)}^\frac{2q}{q-2}\le c_1c_2^2\io(\Theta_\eps+1)^{q-4}\Theta_{\eps z}^2+c_1c_2^q\nonumber
     \eea
     for all $t\in(0,T)$ and $\eps\in(0,1)$ and this infer that
     \bea{tz12}
    \int_0^T\io (\Theta_\eps+1)^q\le c_1c_2^2\int_0^T\io (\Theta_\eps+1)^{q-4}\Theta_{\eps z}^2+c_1c_2^qT\le c_1c_2^2c_3+c_1c_2^qT\nonumber
     \eea
     for all $T>0$ and $\eps\in(0,1)$.
 \end{proof}
\end{lemma}
\noindent When combined with Lemma \ref{ThetaHochq} in the course of another interpolation, the weighted gradient estimate in Lemma \ref{ThetaGemischt} secondly implies bounds for the unweighted quantities $\Theta_{\eps z}$.
\begin{lemma}\label{ThetaZr}
    Let $r\in[1,\frac 32)$ and $T>0$, then one can find $C(r,T)>0$ such that
    \bea{ThetaZr1}
\int_0^T\io |\Theta_{\eps z}|^r \le C(r,T) \qquad \mbox{ for all }\eps \in(0,1).
    \eea
    \begin{proof}
        Since the inequality $r< \frac{3}{2}$ ensures that $\frac{5r-6}{r}<1$, we can pick $p=p(r)\in(0,1)$ such that $p>\frac{5r-6}{r}$, and then obtain that $(5-p)r<\big(5-\frac{5r-6}{r}\big)\cdot r=6$, meaning that $q\equiv q(r):=\frac{(2-p)r}{2-r}$ satisfies
        $$3-q=\frac{3(2-r)-(2-p)r}{2-r}=\frac{6-(5-p)r}{2-r}>0.$$
        As thus $q<3$, besides employing Lemma \ref{ThetaGemischt} we may therefore draw on Lemma \ref{ThetaHochq} to see that given $T>0$ we can find $c_1=c_1(r,T)>0$ and $c_2=c_2(r,T)>0$ such that
        \bea{tz13}
        \int_0^T\io (\Theta_\eps+1)^{p-2}\Theta_{\eps z}^2\le c_1\quad\text{and}\quad \int_0^T\io (\Theta_\eps+1)^q\le c_2\qquad\text{for all }\eps\in(0,1).
        \eea
        Since the fact that $r<2$ enables us to use Young's inequality to see that
        \bea{tz14}
        \int_0^T\io|\Theta_{\eps z}|^r&=&\int_0^T\io\big\{(\Theta_\eps+1)^{p-2}\Theta_{\eps z}^2\Big\}^\frac{r}{2}\cdot(\Theta_\eps+1)^\frac{(2-p)r}{2}\nonumber\\
        &\le& \int_0^T\io (\Theta_\eps+1)^{p-2}\Theta_{\eps z}^2+\int_0^T\io(\Theta_\eps+1)^\frac{(2-p)r}{2-r}\qquad\text{for all }\eps\in(0,1)
\nn        \eea
        in line with our definition of $q$ we conclude \eqref{ThetaZr1} from \eqref{tz13}.
    \end{proof}
\end{lemma}
\noindent The application of Lemma \ref{ThetaHochq} enables the control of the integral on the right-hand side of \eqref{vepsueps}, thus facilitating the derivation of a space-time estimate for the viscosity-related dissipation expressed therein.
\begin{lemma}\label{4.5ref}
    For all $T>0$ there exists $C(T)>0$ such that
    \bea{tg1}
    \int_0^T\io v_{\eps z}^2\le C(T)\qquad \mbox{ for all }\eps \in(0,1).\nn
    \eea
    \begin{proof}
        We once more return to Lemma \ref{vepsuepslem} to find a $c_1=c_1(T)>0$ such that
        \bea{tz20}
    &&\hspace{-10mm}
    \frac{1}{2}\frac{d}{dt}\io \rho_\eps v_\eps^2+\frac{1}{2}\frac{d}{dt}\io p_\eps  u_{\eps z}^2+\io \Gamma_\eps(\Theta_\eps) v_{\eps z}^2\nonumber\\
    &\le& \beta\io \Theta_\eps v_{\eps z}+c_1\io v_\eps^2+c_1\io u_{\eps z}^2\qquad\text{for all }t\in(0,T)\text{ and }\eps\in(0,1).\nn
\nn    \eea
    Since Young's inequality ensure that
    \bea{tz21}
    \beta\io \Theta_\eps v_{\eps z}\le\frac{c_\Gamma}{2}\io v_{\eps z}^2+\frac{\beta^2}{2c_\Gamma}\io \Theta_{\eps}^2\qquad\text{for all }t\in(0,T)\text{ and }\eps\in(0,1),
    \eea
    we thereby obtain that
    \bea{tz22}
    &&\hspace{-10mm}
    \frac{d}{dt}\bigg\{\io \peps v_\eps^2+\io p_\eps u_{\eps z}^2\bigg\}+c_\Gamma\io v_{\eps z}^2\nonumber\\
    &\le& \frac{\beta^2}{c_\Gamma}\io\Theta_\eps^2+2c_1\io v_\eps^2+2c_1\io u_{\eps z}^2\qquad\text{for all }t\in(0,T) \text{ and }\eps\in(0,1)
    \eea
    and hence
    \bea{tz23}
    c_\Gamma\int_0^T\io v_{\eps z}^2&\le& \io \peps(\cdot,0)v_{0\eps}^2+\io p_\eps(\cdot,0)u_{0\eps z}^2\nonumber\\
    &&+\frac{\beta^2}{c_\Gamma}\int_0^T\io \Theta_\eps^2+2c_1\int_0^T\io v_\eps^2+2c_1\int_0^T\io u_{\eps z}^2\qquad\text{for all }\eps\in(0,1).
    \eea
    Thus \eqref{tg1} follows directly from Lemma \ref{MassenAbsch} and Lemma \ref{ThetaHochq}.
    \end{proof}
\end{lemma}
\noindent In preparation for an Aubin-Lions type argument, it is also necessary to note the regularity properties of the time derivatives occurring in \eqref{approx}. The first two of these properties are discussed below.
\begin{lemma}\label{vepstDual} For any $T>0$, one can find $C(T)>0$ such that
\bea{uW1lam}
   \int_0^T \|v_{\eps t} (\cdot,t) \|_{(W_0^{2,2}(\Omega))^\star} \le C(T) \qquad \mbox{ for all }\eps\in(0,1)
\eea
and 
\bea{uepst}
\int_0^T \io u_{\eps t}^2 \le C(T) \qquad \mbox{ for all }\eps\in(0,1).
\eea
\begin{proof}
For fixed $\vp\in C^\infty_0(\Omega)$ fulfilling $\io \vp^2+\io \vp^2_z+\io \vp^2_{zz} \le 1$ from \eqref{approx} and the Cauchy Schwarz inequality as well as \eqref{ab1}-\eqref{ab5.1} we infer that 
\bea{Wslemma1}
&&\hspace{-10mm}
\left|\io \peps v_{\eps t} \vp \right|\nn\\
&=& \bigg| -\io \varepsilon v_{\varepsilon zz} \vp_{zz} -\io\Gamma_\varepsilon(\Theta_\varepsilon) v_{\varepsilon z}\vp_z-\io p_{\varepsilon} u_{\varepsilon z}\vp_z+\beta\io\Theta_{\eps}\vp_z\bigg| \nn\\
&=& \bigg| -\io  \varepsilon v_{\varepsilon zz} \vp_{zz} -\io\Gamma_\varepsilon(\Theta_\varepsilon) v_{\varepsilon z}\vp_z-\io p_{\varepsilon} u_{\varepsilon z}\vp_z +\beta\io\Theta_{\eps}\vp_z\bigg| \nn\\
&\le&  \eps\|v_{\eps zz}\|_{L^2(\Om)}+C_\Gamma\|v_{\eps z}\|_{L^2(\Om)}+\|p_\eps\|_{L^\infty(\Om)}\|u_{\eps z}\|_{L^2(\Om)}+\beta\|\Theta_\eps\|_{L^2(\Om)} \nn
\eea  
for all $t>0$ and $\eps\in(0,1)$, so that with some $c_1=c_1(T)>0$ we have
\begin{align*}
\int_0^T \| \peps v_{\eps t} (\cdot,t)\|_{(W^{2,2}_0(\Omega))^\star}^2 dt \le c_1\eps^2\int_0^T\io v_{\eps zz}^2+c_1\int_0^T\io v_{\eps z}^2+c_1\int_0^T\io u_{\eps z}^2+c_1\int_0^T\io \Theta_\eps^2
\end{align*}
for all $T>0$ and $\eps\in(0,1)$, because of \eqref{ab5.1} and Assumptions \ref{vorrfunc}. We may combine \eqref{t6}, \eqref{t8}, Lemma \ref{ThetaHochq} and Lemma \ref{4.5ref} to obtain $\eqref{uW1lam}$, again due to the positivity of $\peps(z,t)$ for all $(z,t)\in\Omega\times(0,\infty)$.
By collecting $\eqref{t5}$ and $\eqref{t9}$, the estimate \eqref{uepst} results from $\eqref{approx}$ and Young's inequality, since
$$u_{\eps t}^2 \le 2\eps^2 u_{\eps zz}^2 +2 v_\eps^2 \le 2\eps u_{\eps zz}^2+2v_\eps^2 \quad\mbox{ in }\Om\times(0,\infty) $$
for all $\eps\in(0,1)$.
\end{proof}
\end{lemma}
\noindent It is also the case that $\eps$-independent bounds apply to the time derivatives of the temperature variables in some suitable large spaces.
\begin{lemma}\label{W1lam}
    Let $\lambda>3$. Then for any $T>0$ there exists $C(\lambda,T)>0$ such that
   \bea{TW1lam}
   \int_0^T \|\Theta_{\eps t} (\cdot,t) \|_{(W^{1,\lambda}(\Omega))^\star} \le C(T) \qquad \mbox{ for all }\eps\in(0,1).
\eea
\begin{proof}
Using the embedding $W^{1,\lambda}(\Om)\hookrightarrow L^\infty(\Om)$, we can find $c_1=c_1(\lambda)>0$ such that $\|\vp_z\|_{L^\lambda(\Om)}+\|\vp\|_{L^\infty(\Om)}\le c_1$ for all $\vp\in C^1(\overline{\Om})$ fulfilling $\|\vp\|_{W^{1,\lambda}(\Om)}\le 1$. Therefore, by employing the third equation of \eqref{approx} along with the Hölder inequality, \eqref{ab1}, \eqref{ab4}, Young's inequality and the aforementioned principles, it is possible to derive the following equation
    \begin{align}\label{te-1}
    \bigg|\io\beps\Theta_{\eps t}\vp\bigg|&=\bigg|-\io\keps\Theta_{\eps z}\vp_z+\io\Geps(\Theta_\eps)v_{\eps z}^2\vp-\beta\io\Theta_\eps v_{\eps z}\vp\bigg|\nn\\
    &\le \|k_\eps\|_{L^\infty(\Om)}\|\Theta_{\eps z}\|_{L^\frac{\lambda}{\lambda-1}(\Om)}\|\vp_z\|_{L^\lambda(\Om)}+C_\Gamma\|\vp\|_{L^\infty(\Om)}\cdot\io v_{\eps z}^2+\beta\|\vp\|_{L^\infty(\Om)}\io\Theta_\eps|v_{\eps z}|\nn\\
    &\le c_1\|k_\eps\|_{L^\infty(\Om)}\io |\Theta_{\eps z}|^\frac{\lambda}{\lambda-1}+c_1(C_\Gamma+\beta)\io v_{\eps z}^2+c_1\beta\io\Theta_\eps^2+c_1\|k_\eps\|_{L^\infty(\Om)}|\Om|
    \end{align}
    for all $t>0$ and $\eps\in(0,1)$g. It is evident that $\frac{\lambda}{\lambda-1}\in(1,\frac{3}{2})$ since $\lambda>3$. Thus, in view of \eqref{ab3} and Assumptions \ref{vorrfunc}, our claim follows with an integration over time of \eqref{te-1} and with the help of Lemma \ref{ThetaHochq}, Lemma \ref{ThetaZr} and Lemma \ref{4.5ref}.
\end{proof}
\end{lemma}

\section{Passage to the limit and proof of Theorem \ref{EX}}\label{sec5}

\noindent Collecting our accomplishments, we are now able to take a limit along some subsequences to arrive at a limit object $(u,v,\Theta)$, which solves the first equation in \eqref{sys0} in the weak sense as noted in \eqref{DefWeak}.

\begin{remark}
In view of Remark \ref{RemarkShift}, the functions $v$, $u$ and $\Theta$ in \eqref{KGZEIG1} are understood after reversing the shift introduced therein; in particular, they satisfy the original boundary conditions of \eqref{system1} rather than homogeneous Dirichlet conditions prescribed in \eqref{approx}.
\end{remark}
\begin{lemma}\label{KGZ1terTeil}
There exist  $(\eps_j)_{j\in\N}\subset(0,1)$ and functions $(v,u,\Theta)$ such that  
\be{KGZEIG1}
    \lball
    v \in L^\infty((0,\infty);L^2(\Om))\cap L^2_{loc}([0,\infty);W^{1,2}(\Om)),\\
    u\in C^0(\overline{ \Omega}\times[0,\infty))\cap L^\infty((0,\infty); W^{1,2}(\Omega))\qquad and\\
    \Theta \in L^\infty((0,\infty);L^1(\Om))\cap\bigcap_{q\in[1,3)} L^q_{loc}(\bom \times [0,\infty))\cap \bigcap_{r\in[1,\frac32)} L^r_{loc} ([0,\infty);W^{1,r}(\Omega)),
    \ear
\ee
which satisfy $u(\cdot,0)=u_0$, $v(\cdot,0)=u_1$ and $\Theta\ge0$ a.e. in $\Om\times(0,\infty)$, that $\eps_j\searrow 0$ as $j\to \infty$.
\begin{alignat}{2}
v_\eps \hspace{1mm} &\to v \qquad &&\mbox{ a.e. in }\Om\times(0,\infty) \mbox{ and in } L^2_{loc}(\bom\times[0,\infty)),\label{vKgz}\\
v_{\eps z} &\rightharpoonup  v_z\qquad &&\mbox{ in }L^2_{loc} (\bom\times[0,\infty)),\label{vzKgz}\\
u_\eps &\to u \qquad &&\mbox{ in }L^2_{loc} (\bom \times[0,\infty)), \label{uKgz}\\
u_{\eps z} &\rightharpoonup u_z \qquad &&\mbox{ in }L^2_{loc}(\bom\times[0,\infty))\label{uzKgz}\\
 \Theta_\eps \ &\to \Theta \qquad &&\mbox{ a.e. in }\Om\times(0,\infty) \mbox{ and in } L^q_{loc}(\bom\times(0,\infty)) \mbox{ for all }q\in[1,3),\label{ThetaKgz}\\
  \Theta_{\eps z} &\rightharpoonup \Theta_z \qquad &&\mbox{ in } L^r_{loc}(\bom\times(0,T)) \mbox{ for all }r\in[1,3/2 ),\label{ThetazKgz}
\end{alignat}
 Furthermore,
\bea{SchwacheKGZ1}
  \int_0^\infty \int_\Omega \rho v \varphi_t&+&\int_0^\infty \io \rho_t v \vp-\io \rho(\cdot,0)v_{0}\vp(\cdot,0)\nn\\
  &=& \int_0^\infty\io \Big(\Gamma(\Theta)v_z+p u_z\Big)\varphi_z-\int_0^\infty\io \beta\Theta\vp_z,\hspace{-10mm}
\eea
and 
\bea{SchwacheKGZ2}
& &\int_0^\infty \io b_{\eps t}\Theta_\eps\vp + \int_0^\infty\io b_\eps \Theta_\eps \vp_t - \io b_\eps(\cdot,0)\Theta_{0 \eps}\vp(\cdot,0) \nn\\
&\to& \int_0^\infty \io b_t \Theta \vp +\int_0^\infty \io b \Theta \vp_t - \io b(\cdot,0) \Theta_0\vp(\cdot,0) \qquad \mbox{ as }\eps \searrow0,
\eea
as well as 
\bea{SchwacheKGZ3}
\int_0^\infty \io k_\eps \Theta_{\eps z} \vp_z - \int_0^\infty \io \beta \Theta_\eps v_{\eps z }\vp \to \int_0^\infty \io k \Theta_{ z} \vp_z - \int_0^\infty \io \beta \Theta v_{ z}\vp
\eea
as $\eps\searrow0$ for all $\vp\in C^\infty_0 (\bom \times[0,T))$, and 
\bea{KGZSubst}
u_t=v \qquad \mbox{ a.e. in }\Om\times(0,\infty).
\eea
\end{lemma}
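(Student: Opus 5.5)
The plan is a standard compactness argument. The $\eps$-independent bounds of Section~\ref{sec3.3} feed Aubin--Lions type lemmas, a diagonal extraction over $T\in\N$ gives the subsequence $(\eps_j)$, and we then pass to the limit in each term of the approximate weak formulations.

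First I collect the uniform bounds on $(0,T)$ for each $T>0$:
\begin{itemize}
\item $v_\eps$ is bounded in $L^\infty((0,T);L^2(\Om))$ by \eqref{t5} and in $L^2((0,T);W^{1,2}_0(\Om))$ by Lemma \ref{4.5ref}, while $v_{\eps t}$ is bounded in $L^1((0,T);(W^{2,2}_0(\Om))^\star)$ by Lemma \ref{vepstDual}. Aubin--Lions for $W_0^{1,2}\hookrightarrow\hookrightarrow L^2\hookrightarrow (W_0^{2,2})^\star$ then gives strong $L^2_{loc}$ convergence, hence a.e.\ convergence along a further subsequence, and weak convergence of $v_{\eps z}$. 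This yields \eqref{vKgz} and \eqref{vzKgz}.
\item $u_\eps$ is bounded in $L^\infty((0,T);W^{1,2}(\Om))$ by \eqref{t5/5}, \eqref{t6}, and $u_{\eps t}$ is bounded in $L^2(\Om\times(0,T))$ by \eqref{uepst}. Since $W^{1,2}(\Om)\hookrightarrow\hookrightarrow C^0(\bom)$ in one dimension, Aubin--Lions (Simon's version with $L^\infty$ in time) gives $u_\eps\to u$ in $C^0(\bom\times[0,T])$. In particular this gives \eqref{uKgz}, \eqref{uzKgz} (weak-$\star$) and $u(\cdot,0)=u_0$ via \eqref{initkonv}.
\item $\Theta_\eps$ is bounded in $L^r((0,T);W^{1,r}(\Om))$ by Lemmas \ref{ThetaHochq} and \ref{ThetaZr}, and $\Theta_{\eps t}$ is bounded in $L^1((0,T);(W^{1,\lambda})^\star)$ by Lemma \ref{W1lam}. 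Aubin--Lions gives strong $L^1_{loc}$ and a.e.\ convergence. Since Lemma \ref{ThetaHochq} bounds $\Theta_\eps$ in $L^{q'}$ for every $q'<3$, Vitali's theorem upgrades this to convergence in $L^q_{loc}$ for all $q<3$, which is \eqref{ThetaKgz}; weak compactness in $L^r$ gives \eqref{ThetazKgz}.
\end{itemize}
Nonnegativity of $\Theta$ passes through the a.e.\ limit, and the $L^\infty$-in-time regularities in \eqref{KGZEIG1} follow by lower semicontinuity. For \eqref{KGZSubst}, I pass to the limit in $u_{\eps t}=\eps u_{\eps zz}+v_\eps$ distributionally. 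The term $\eps u_{\eps zz}$ vanishes in $L^2$ because $\eps^2\int\!\!\int u_{\eps zz}^2\le \eps C(T)\to0$ by \eqref{t9}, and the weak $L^2$ limit of $u_{\eps t}$ must be $u_t$. The same identification in the weak formulation gives $v(\cdot,0)=u_1$.

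To obtain \eqref{SchwacheKGZ1}, I test the first equation of \eqref{approx} with $\vp\in C_0^\infty(\bom\times[0,\infty))$ and integrate by parts in time and space. The boundary terms are the delicate point, since $\vp$ is not required to vanish on $\partial\Om$. I would use the boundary conditions $v_\eps=v_{\eps zz}=0$ and note that the natural flux condition of \eqref{system1} is only recovered in the limit. If the boundary terms do not cancel, I would first derive the identity for $\vp$ vanishing on $\partial\Om$ and then argue via Remark \ref{RemarkShift} and a density step. For the limit passage:
\begin{itemize}
\item $\eps\int\!\!\int v_{\eps zz}\vp_{zz}\to0$ by \eqref{t8}, since it is $O(\sqrt\eps)$.
\item $\rho_\eps v_\eps\vp_t$ and $\rho_{\eps t}v_\eps\vp$ converge by \eqref{KGZ_Rho} and \eqref{vKgz}.
\item $p_\eps u_{\eps z}\vp_z\to p u_z\vp_z$ follows from weak convergence together with the uniform convergence \eqref{KGZ_P}.
\item $\beta\Theta_\eps\vp_z$ converges by \eqref{ThetaKgz}.
\end{itemize}
The main obstacle here is the nonlinear term $\Geps(\Theta_\eps)v_{\eps z}\vp_z$. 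I handle it as a product of a strongly convergent factor with a weakly convergent one: $\Geps(\Theta_\eps)\vp_z\to\Gamma(\Theta)\vp_z$ a.e. by \eqref{KGZ_Gamma}, the continuity of $\Gamma$ and the a.e.\ convergence of $\Theta_\eps$. This factor is bounded by $C_\Gamma|\vp_z|$, so dominated convergence gives strong $L^2$ convergence, and pairing it with the weakly convergent $v_{\eps z}$ from \eqref{vzKgz} yields the limit.

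For \eqref{SchwacheKGZ2}, the terms involving $b_{\eps t}\Theta_\eps$, $b_\eps\Theta_\eps\vp_t$ and the initial data converge by \eqref{KGZ_B}, \eqref{ThetaKgz} and \eqref{initkonv}. For \eqref{SchwacheKGZ3}, $k_\eps\Theta_{\eps z}\vp_z$ converges by \eqref{KGZ_K} together with \eqref{ThetazKgz}. The term $\Theta_\eps v_{\eps z}\vp$ is again a strong-times-weak product: $\Theta_\eps\to\Theta$ strongly in $L^2_{loc}$ (taking $q=2<3$), and $v_{\eps z}\rightharpoonup v_z$ weakly in $L^2_{loc}$. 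The quadratic term $\Geps(\Theta_\eps)v_{\eps z}^2$ is deliberately excluded from this lemma and is left for Lemma \ref{lem_sqrtL2}.
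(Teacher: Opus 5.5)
Your compactness argument follows the paper's route and is sound given the earlier lemmas. It covers Aubin--Lions/Simon for $v_\eps,u_\eps,\Theta_\eps$, Vitali for \eqref{ThetaKgz}, strong-times-weak products for $\Geps(\Theta_\eps)v_{\eps z}\vp_z$ and for $\Theta_\eps v_{\eps z}\vp$ (the paper never treats the latter), and $\eps u_{\eps zz}\to0$ in $L^2_{loc}$ for \eqref{KGZSubst}.

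The genuine gap is \eqref{SchwacheKGZ1} for test functions that do not vanish on $\partial\Om$, and your fallback cannot close it. Testing the first equation of \eqref{approx} with $\vp\in C_0^\infty(\bom\times[0,\infty))$ leaves, besides the terms you list, the boundary contribution
\[
\int_0^\infty\Big[\big(\Geps(\Theta_\eps)v_{\eps z}+p_\eps u_{\eps z}-\beta\Theta_\eps-\eps v_{\eps zzz}\big)\vp\Big]_{\partial\Om}\,dt .
\]
The conditions $v_\eps=v_{\eps zz}=u_\eps=\Theta_{\eps z}=0$ on $\partial\Om$ (hence also $u_{\eps zz}=v_{\eps zzzz}=0$) impose nothing on the traces of $v_{\eps z}$, $u_{\eps z}$, $\Theta_\eps$ or $v_{\eps zzz}$. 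A density step does not help either. To pass to the limit in $\int_0^\infty\io(\Gamma(\Theta)v_z+pu_z-\beta\Theta)\vp_z$, the test functions must converge at least weakly in $L^2_{loc}([0,\infty);W^{1,2}(\Om))$, and that convergence preserves zero boundary traces. So the identity for general $\vp$ amounts to the zero-trace identity \emph{plus} the natural condition $\Gamma(\Theta)v_z+pu_z-\beta\Theta=0$ on $\partial\Om$, which is genuinely new information.

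Remark~\ref{RemarkShift} is irrelevant here, since subtracting $c_\eps$ and $c_\eps t$ does not change $v_{\eps z}$ or $u_{\eps z}$. Nor is the flux condition recovered in the limit. The limit carries Dirichlet data ($v\in L^2_{loc}([0,\infty);W^{1,2}_0(\Om))$ before undoing the shift), so demanding zero stress as well overdetermines the problem. A linear caricature shows this: take $\Om=(0,1)$, $\rho=p=1$, $\Gamma$ constant, formally $\beta=0$, $u_0=\sin(\pi z)$ and $u_1=0$. Then the approximate and limit solutions have the form $a(t)\sin(\pi z)$. The limit's boundary stress at $z=0$ is $\pi(\Gamma a'(t)+a(t))$, which equals $\pi\neq0$ at $t=0$. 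Hence \eqref{SchwacheKGZ1} fails for suitable $\vp$ with $\vp(0,\cdot)\not\equiv0$.

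The paper's own proof does not close this gap either: it asserts that the boundary integral vanishes because of the boundary conditions, which \eqref{approx} does not justify. What your argument (and the paper's) actually proves is \eqref{SchwacheKGZ1} for $\vp$ vanishing on $\partial\Om\times[0,\infty)$, i.e.\ the weak form of the Dirichlet problem. That is also the only class of test functions Lemma~\ref{GesamtAbsch1} uses later. Obtaining the stress-free condition would require an approximate problem whose boundary conditions make the displayed flux vanish for every $\eps$, with the estimates of Section~\ref{sec4} redone accordingly.

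Two smaller points:
\begin{itemize}
\item Lower semicontinuity applied to the $T$-dependent bounds of Lemma~\ref{MassenAbsch} yields only $L^\infty((0,T);\cdot)$ for each $T$, not the global $L^\infty((0,\infty);\cdot)$ in \eqref{KGZEIG1}. The paper's proof overclaims in the same way, while Theorem~\ref{EX} asserts only $L^\infty_{loc}$.
\item Integrating by parts in time produces $+\io\rho(\cdot,0)u_1\vp(\cdot,0)$ on the left-hand side of \eqref{SchwacheKGZ1}, not the printed minus sign. The paper itself uses the plus sign in \eqref{xiAnw1}.
\end{itemize}
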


\begin{proof}
By collecting Lemma \ref{MassenAbsch} and Lemma \ref{vepstDual} we infer
$$(v_\eps)_{\eps\in(0,1)} \mbox{ is bounded in }L^2((0,T);W_0^{1,2}(\Om))  \mbox{ for all } T>0,$$
and 
$$(v_{\eps t})_{\eps\in(0,1)} \mbox{ is bounded in }L^2((0,T);(W_0^{2,2}(\Om))^\star) \mbox{ for all } T>0,$$
while due to Lemma \ref{MassenAbsch} and Lemma \ref{vepstDual} we obtain
$$(u_\eps)_{\eps\in(0,1)} \mbox{ is bounded in }L^\infty((0,T);W_0^{1,2}(\Om)) \mbox{ for all }T>0,$$
    and that
    $$(u_{\eps t})_{\eps\in(0,1)} \mbox{ is bounded in }L^2(\Om\times(0,T)) \mbox{ for all }T>0.$$
    For $\Theta_\eps$ we conclude in a similar fashion from Lemma \ref{ThetaZr}, Lemma \ref{MassenAbsch} and Lemma \ref{W1lam}
    $$(\Theta_\eps)_{\eps\in(0,1)} \mbox{ is bounded in }L^r((0,T);W^{1,r}(\Om)) \mbox{ for all }T>0 \mbox{ and any }r\in\left(1,\frac32\right),$$
    and that
    $$(\Theta_{\eps t})_{\eps\in(0,1)} \mbox{ is bounded in }L^1((0,T);W^{1,\lambda}((\Om))^\star) \mbox{ for all }T>0 \mbox{ and each }\lambda>3.$$
    With respect to the compactness of the embeddings 
    $$W^{1,2}_0(\Om)\hookrightarrow L^2(\Om), \ W_0^{1,2}(\Om) \hookrightarrow C^0(\bom) \mbox{ and } W^{1,r}(\Om)\hookrightarrow L^1(\Om) \mbox{ for }r>1$$
    we utilize the Aubin-Lions lemma (\cite{temam2024}) three times to infer along a subsequence $(\eps_j)_{j\in\N}\subset(0,1)$ with $\eps_j\searrow0$ as $j\to\infty$  for our limit object 
    $$ v\in L^2_{loc}([0,\infty);W_0^{1,2}(\Om)), \qquad u\in C^0(\bom\times[0,\infty))\cap L^\infty((0,\infty);W^{1,2}_0(\Om)) $$
    and 
    $$\Theta\in \bigcap_{r\in(1,\frac32)} L^r_{loc} ([0,\infty);W^{1,r}(\Om)), $$
that \eqref{vKgz}-\eqref{uzKgz} and \eqref{ThetazKgz} hold and furthermore $\Theta_\eps\to\Theta$ a.e. in $\Om\times(0,\infty)$ as $\eps=\eps_j\searrow0$, whence also $u(\cdot,0)=u_0$, $\Theta(\cdot,0)=\Theta_0$ in $\Om$ and $\Theta\ge0$ a.e. in $\Om\times(0,\infty)$.
In view of Lemma \ref{MassenAbsch} and Lemma \ref{ThetaHochq} we further gather 
$$(v_\eps)_{\eps\in(0,1)} \mbox{ is bounded in }L^\infty((0,\infty);L^2(\Om)), $$
and 
$$(\Theta_\eps)_{\eps\in(0,1)} \mbox{ is bounded in }L^\infty((0,\infty);L^1(\Om))\mbox{ and in }L^1(\Om\times(0,T))\mbox{ for all }T>0 \mbox{ and each }q\in(1,3),$$
so the limit functions actually satisfy \eqref{KGZEIG1}. By applying Vitali's convergence theorem also \eqref{ThetaKgz} holds and the pointwise convergence part guarantees, combined with \eqref{KGZ_Gamma},
\bea{GammaKgz}\Gamma_\eps(\Theta_\eps)\to \Gamma(\Theta) \mbox{ in }L^2_{loc}(\bom\times[0,\infty)) \mbox{ as }\eps=\eps_j\searrow0.\eea
Now, we verify \eqref{SchwacheKGZ1} by fixing $\vp\in C^\infty_0(\bom\times[0,\infty))$, utilizing the first equation in \eqref{approx} and partial integration to obtain 
\begin{align*}
&\quad\int_0^\infty \io \rho_\eps v_\eps \vp_t
+ \int_0^\infty \io \rho_{\eps t} v_\eps \vp -  \io \rho_\eps(\cdot,0) v_{0\eps} \vp(\cdot,0)\\
&= \eps \int_0^\infty \io v_\eps \vp_{zzzz} +\int_0^\infty \io \Gamma(\Theta_\eps) v_{\eps z} \vp_z + \int_0^\infty \io p_{\eps} (z,t) u_{\eps z} \vp_z +\int_0^\infty\io \beta \Theta_\eps \vp_z \\
& \quad -\int_0^\infty \int_{\partial \Omega} \left[\Gamma(\Theta_\eps)v_{\eps z}+p_\eps u_{\eps z}+\beta\Theta_\eps\right]\vp\qquad\mbox{ for all }\eps\in(0,1),
\end{align*}
where in view to the boundary condition the  last term vanishes and due to \eqref{vKgz} and \eqref{KGZ_Rho}
$$\int_0^\infty \io v_\eps \vp_t \to  \int_0^\infty \io v \vp_t \qquad \mbox{ and } \qquad \int_0^\infty \rho_{\eps t} v_\eps \vp \to \int_0^\infty\io \rho_{ t}v \vp\quad \mbox{ as }\eps=\eps_j\searrow0$$
as well as 
$$\eps \int_0^\infty \io v_\eps \vp_{zzzz} \to 0 \qquad\mbox{ as }\eps=\eps_j\searrow0,$$
and due to \eqref{initkonv}, \eqref{uzKgz} as well as \eqref{KGZ_P}
$$ \io v_{0 \eps} \vp(\cdot,0) \to \io u_{0t} \vp(\cdot,0) \qquad \mbox{ and }\qquad \int_0^\infty\io p_{\eps} u_{\eps z}\vp_z \to \int_0^\infty \io p u_z \vp_z \qquad \mbox{ as }\eps=\eps_j\searrow0.  $$
Furthermore, with respect to \eqref{ThetaKgz}, \eqref{vzKgz} and \eqref{GammaKgz} we infer
$$ \int_0^\infty\io \beta \Theta_\eps \vp_z\to \int_0^\infty \io \beta\Theta \vp_z\qquad\mbox{ and }\qquad \int_0^\infty\io \Gamma_\eps(\Theta_\eps) v_{\eps z} \vp_z\to \int_0^\infty \io \Gamma(\Theta) v_z \vp_z $$
as $\eps=\eps_j\searrow0$ and thus we are able to conclude \eqref{SchwacheKGZ1} holds. Recalling the second equation in \eqref{approx}, we obtain for arbitrary $\vp\in C_0^\infty(\bom\times[0,\infty))$
$$-\int_0^\infty\io u_\eps \vp_t=\eps\int_0^\infty \int_{\partial\Omega} u_{\eps z}\vp-\eps \int_0^\infty \io u_{\eps z} \vp_{z} +\int_0^\infty\io  v_\eps \vp$$
and verify on letting $\eps=\eps_j\searrow0$ finally \eqref{KGZSubst}. To accomplish  \eqref{SchwacheKGZ2}, we note that due to $b_\eps$ satisfying \eqref{KGZ_B} and $\Theta_{0\eps}$ fulfilling \eqref{initkonv}
$$\io b_{\eps}(\cdot,0) \Theta_{0\eps} \vp(\cdot,0) \to \io b(\cdot,0) \Theta_{0} \vp(\cdot,0)  \qquad\mbox{ as }\eps=\eps_j\searrow0$$
and since we already derived \eqref{ThetaKgz}, we conclude from \eqref{KGZ_B}
$$\int_0^\infty \io b_{\eps t}\Theta_\eps\vp \to \int_0^\infty \io b_t \Theta \vp \qquad\mbox{ and }\qquad \int_0^\infty\io b_\eps \Theta_\eps \vp_t 
\to \int_0^\infty \io b \Theta \vp_t \qquad \mbox{ as }\eps=\eps_j \searrow0,$$
which yields \eqref{SchwacheKGZ2}. A combination of the already proven \eqref{ThetazKgz} and \eqref{KGZ_K} then yields \eqref{SchwacheKGZ3}.
\end{proof}

\noindent To derive the second equation of our Definition \ref{DefWeak}, we adapt a known technique, which has been used e.g. in \cite{Winklerweak1d}. We start by noting the following lemma without proof, since these basic properties of Steklov averages have been proven elsewhere e.g. \cite[Lemma 5.1]{Winklerweak1d}.

\begin{lemma}\label{SolutionProp}
Let $(u,v,\Theta)$ be as in Lemma \ref{KGZ1terTeil}, defining 
\bea{hatv}
\hat v(z,t):= \lball v(z,t), \qquad x\in\Om, \ t>0,\\
u_{0t}(z), \qquad z\in \Om,\ t<0,
\ear
\eea
as well as
\bea{hatu}
\hat u(z,t):= \lball u(z,t), \qquad \qquad \qquad z\in\Om, \  t>0,\\
u_{0}(z)+t u_{0t}(z),  \ \qquad z\in \Om, \ t<0.
\ear
\eea
and
\bea{Shvp}
(S_h \vp)(z,t) := \frac 1h \int_{t-h}^t \vp(z,s) ds, \qquad z\in\Om, \ t\in\R,\ h>0,\ \vp\in L^1_{loc}(\bom\times\R),
\eea
we have
\bea{DefS}
(S_h\hat v_z)(z,t)= \frac{\hat u_z(z,t)-\hat u_z(z,t-h)}{h} \qquad \mbox{ for a.e. }(z,t)\in\Om\times\R \mbox{ and each }h>0,
\eea
and
\bea{Shv}
S_h\hat v \rightharpoonup v \qquad \mbox{ in }L^2_{loc}(\bom\times[0,\infty)) \qquad\mbox{ as }h\searrow0,
\eea
as well as
\bea{Shvx}
S_h \hat v_z \rightharpoonup v_z \qquad \mbox{ in }L^2_{loc}(\bom \times [0,\infty))\qquad\mbox{ as }h\searrow0.
\eea
\end{lemma}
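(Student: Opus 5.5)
The plan is to work with the extended functions $\hat u,\hat v$, which lie in $L^2_{loc}(\bom\times\R)$ and $L^2_{loc}(\R;W^{1,2}(\Om))$ respectively. First I would check that $\hat u_t=\hat v$ holds in the distributional sense on $\Om\times\R$. For $t>0$ this is \eqref{KGZSubst}. For $t<0$ it follows directly from \eqref{hatu}, since $\partial_t(u_0+tu_{0t})=u_{0t}$. Across $t=0$ there is no jump, because $u\in C^0(\bom\times[0,\infty))$ with $u(\cdot,0)=u_0$. The extension $\hat u(\cdot,t)$ is therefore absolutely continuous in time with values in $L^2(\Om)$, and $\hat u(\cdot,t)-\hat u(\cdot,t-h)=\int_{t-h}^t\hat v(\cdot,s)\,ds$. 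Differentiating in $z$, which is allowed because $\hat v\in L^2_{loc}(\R;W^{1,2}(\Om))$ (using $u_{0t}\in W^{1,2}$ after the smoothing, or interpreting it distributionally), gives \eqref{DefS} for a.e. $(z,t)$. Equivalently, I would test against $\psi_z$ with $\psi\in C_0^\infty$ and use Fubini.

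For \eqref{Shv} and \eqref{Shvx} I would rely on the standard Steklov facts. If $f\in L^2_{loc}(\bom\times\R)$, then $S_hf\to f$ strongly in $L^2_{loc}$, and $\|S_hf\|_{L^2(\Om\times(a,b))}\le\|f\|_{L^2(\Om\times(a-1,b))}$ for $h<1$ by Jensen. Applying this with $f=\hat v$ and $f=\hat v_z$, both in $L^2_{loc}(\bom\times\R)$ by \eqref{KGZEIG1} together with the regularity of the initial datum on $t<0$, gives strong convergence and hence weak convergence. On $[0,\infty)$ the limits are $v$ and $v_z$. The strong convergence comes from translation continuity in $L^2$: $\|S_hf-f\|\le\sup_{0\le s\le h}\|f(\cdot-s)-f\|$, and this tends to $0$ by density of continuous functions.

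The main obstacle is \eqref{Shvx}, because $\hat v_z$ on $t<0$ equals $u_{0t,z}$, and the theorem only assumes $u_{0t}\in L^2$. If this term is not in $L^2$, I would instead use \eqref{DefS} to write $S_h\hat v_z=(\hat u_z-\hat u_z(\cdot-h))/h$. Restricted to $t\ge h$, this only involves $v_z\in L^2_{loc}$. For $t\in(0,h)$, the contribution over a set of measure $O(h)$ can be controlled with the $L^\infty(L^2)$ bound on $u_z$ in the weak sense, by testing against smooth functions and moving the difference quotient onto the test function. This yields weak convergence even without a uniform $L^2$ bound near $t=0$, which is presumably why the statement only asserts weak convergence.
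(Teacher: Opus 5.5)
Your main route is exactly the standard argument the paper has in mind. The paper states the lemma without proof and refers to \cite[Lemma~5.1]{Winklerweak1d}. The route consists of $\hat u_t=\hat v$ in $\mathcal D'(\Om\times\R)$ with no jump at $t=0$, the fundamental theorem of calculus for \eqref{DefS}, and $L^2_{loc}$-continuity of Steklov averages for \eqref{Shv}, \eqref{Shvx}. It gives \eqref{Shv} in full generality.

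For \eqref{Shvx}, however, it needs $\hat v_z\in L^2_{loc}(\bom\times\R)$, i.e.\ $u_{0t}\in W^{1,2}(\Om)$. Already \eqref{DefS}, read as an a.e.\ identity, needs $(u_{0t})_z$ to be a function. Theorem~\ref{EX} does not provide this, and neither of your workarounds closes the gap.

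``After the smoothing'' is not available. $\hat v$ is built from $u_{0t}$ itself; the regularized $v_{0\eps}$ only enter the approximate problems.

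The fallback fails for two reasons. First, on $\Om\times(0,h)$ one has
\[
(S_h\hat v_z)(\cdot,t)=\frac1h\int_0^t v_z(\cdot,s)\,ds+\frac{h-t}{h}\,(u_{0t})_z .
\]
The offending piece therefore involves $(u_{0t})_z$, not $u_z$, so the $L^\infty(L^2)$ bound on $u_z$ is irrelevant. If $u_{0t}\notin W^{1,2}(\Om)$, then $S_h\hat v_z\notin L^2(\Om\times(0,T))$ for every $h,T>0$, and \eqref{Shvx} is not even meaningful.

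Second, ``weak convergence in $L^2$ without a uniform $L^2$ bound'' cannot happen, since weakly convergent sequences are bounded (Banach--Steinhaus). Moving the difference quotient onto smooth test functions only gives convergence in $\mathcal D'$. That is too weak for the later uses:
\begin{itemize}
\item Lemma~\ref{GesamtAbsch2} explicitly uses $\limsup_{h\searrow0}\|S_h\hat v_z\|_{L^2(\Om\times(0,T))}<\infty$.
\item \eqref{KGZ_I4} pairs $S_h\hat v_z$ with $\xi\Gamma(\Theta)v_z$, which is merely in $L^2$.
\item $\xi S_h\hat v$ must be a $W^{1,2}_0$-valued test function in Lemma~\ref{GesamtAbsch1}.
\end{itemize}

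So, with the extension by $u_{0t}$, the lemma holds precisely under the extra hypothesis $u_{0t}\in W^{1,2}(\Om)$. Under that hypothesis your first two paragraphs form a complete proof.

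If you want to keep only $u_{0t}\in L^2(\Om)$, you have to change the extension, e.g.\ $\hat v:=0$ and $\hat u:=u_0$ for $t<0$. Then $\hat u_t=\hat v$ still holds across $t=0$ (since $u(\cdot,0)=u_0$) and $\hat u_z,\hat v_z\in L^2_{loc}(\bom\times\R)$. Your argument then gives \eqref{DefS}--\eqref{Shvx} verbatim.

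The price is that $(S_h\hat v)(\cdot,0)=0$. In Lemma~\ref{GesamtAbsch1}, the term $\frac12\io\rho(\cdot,0)u_{0t}^2$ must then come from the lower bound $\liminf_{h\searrow0}\frac1{2h}\int_0^h\io\rho\xi v^2\ge\frac12\io\rho(\cdot,0)u_{0t}^2$. This term appears when Young's inequality is applied only on $(h,\infty)$. The lower bound follows from $v(\cdot,t)\rightharpoonup u_{0t}$ in $L^2(\Om)$ as $t\searrow0$, which comes from the weak formulation and the $L^\infty((0,\infty);L^2(\Om))$ bound, together with weak lower semicontinuity of the norm.
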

\noindent 
In preparation for our final boundary process, we derive an inequality from \eqref{SchwacheKGZ1} by restricting the boundary process to a suitable class of test functions. We will further restrict this class later on when our analysis is based on additional properties of those test functions.
\begin{lemma}\label{GesamtAbsch1}
    Let $(u,v,\Theta)$, $\hat v$ and $S_h$ be as in Lemma \ref{KGZ1terTeil} and \ref{SolutionProp}, and let $\xi \in C^\infty_0([0,\infty))$ be nonincreasing and such that $\xi(0)=1$. Then
    \bea{Xi1}
\int_0^\infty \io  \Gamma(\Theta(z,t)) \xi(t) v_z^2(z,t) dz dt &\ge& \frac12 \int_0^\infty \io \big(\rho_t(z,t) \xi(t)+\rho(z,t)\xi'(t)\big) v^2 (z,t) dz dt \nn\\
& &+\frac 12\io \rho(z,0) u_{0t}^2(z) dz +\frac12\int_0^\infty \io \beta\Theta(z,t) v_z(z,t) \xi(t) \nn\\
    & & -\limsup_{h\searrow0} \int_0^\infty\io p(z,t) u_z(z,t)\xi(t)(S_h \hat v_z) dz dt 
\eea
\end{lemma}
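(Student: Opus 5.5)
The plan is to test the limit identity \eqref{SchwacheKGZ1} with $\varphi(z,t):=\xi(t)\,(S_h\hat v)(z,t)$ for fixed $h>0$, and then let $h\searrow0$. This follows the Steklov-average technique of \cite{Winklerweak1d}. The test function is not smooth, so I first check that it is admissible by a density argument. Since $v\in L^2_{loc}([0,\infty);W^{1,2}_0(\Om))$ and $u_{0t}\in L^2(\Om)$, we have $S_h\hat v\in L^2_{loc}([0,\infty);W^{1,2}(\Om))$. By \eqref{DefS}, $(S_h\hat v)_t=\frac1h(\hat v(\cdot,t)-\hat v(\cdot,t-h))\in L^2_{loc}$, and $\xi$ has compact support. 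All terms in \eqref{SchwacheKGZ1} are continuous with respect to $\varphi$ in the norm $\|\varphi\|_{L^2}+\|\varphi_t\|_{L^2}+\|\varphi_z\|_{L^2}$, because $\Gamma(\Theta)v_z$, $pu_z$ and $\Theta$ lie in $L^2_{loc}$ by Lemma \ref{4.5ref}, \eqref{ThetaKgz} and boundedness of $\Gamma$. Hence \eqref{SchwacheKGZ1} extends to such $\varphi$; the boundary values of $\varphi$ at $\partial\Om$ play no role because the weak form has no boundary terms.

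With this choice, the right-hand side becomes
\[
\int_0^\infty\io \Gamma(\Theta)\xi v_z S_h\hat v_z+\int_0^\infty\io p u_z\xi S_h\hat v_z-\beta\int_0^\infty\io\Theta\xi S_h\hat v_z .
\]
By \eqref{Shvx}, together with $\Gamma(\Theta)\xi v_z\in L^2$ and $\Theta\xi\in L^2$ on the support of $\xi$, the first and third terms converge to $\int\int\Gamma(\Theta)\xi v_z^2$ and $-\beta\int\int\Theta\xi v_z$. The $p u_z$ term has only weak convergence on one factor, so I keep it as a $\limsup$, exactly as it appears in \eqref{Xi1}.

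The main step is the left-hand side. The terms $\int\int\rho_t v\xi S_h\hat v$ and $\int\io\rho(\cdot,0)u_{0t}\xi(0)S_h\hat v(\cdot,0)$ are harmless. The first converges to $\int\int\rho_t\xi v^2$ by \eqref{Shv} and $v\in L^2$. In the second, $S_h\hat v(\cdot,0)=u_{0t}$ holds exactly by \eqref{hatv}, giving $\io\rho(\cdot,0)u_{0t}^2$. The critical term is $\int\int\rho v\,\xi\,(S_h\hat v)_t$, where the weak limits do not suffice. Here I use the elementary inequality
\[
v(t)\big(v(t)-\hat v(t-h)\big)\ge\tfrac12\big(v^2(t)-\hat v^2(t-h)\big).
\]
Multiplying by $\rho\xi/h\ge0$ gives the lower bound $\frac1{2h}\int\int\rho\xi\,(v^2(t)-\hat v^2(t-h))$. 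A change of variables $t\mapsto t+h$ in the shifted part yields
\[
\frac12\int\int\frac{\rho(t)\xi(t)-\rho(t+h)\xi(t+h)}{h}v^2-\frac1{2h}\int_{-h}^0\io\rho(t+h)\xi(t+h)u_{0t}^2 .
\]
Letting $h\searrow0$, and using that $\rho\in C^1$ in time and $\xi$ is smooth, this tends to $-\frac12\int\int(\rho\xi)_t v^2-\frac12\io\rho(\cdot,0)u_{0t}^2$. So the critical term is bounded below by a quantity with an explicit limit. This inequality is where the sign conditions $\rho>0$ and $\xi\ge0$ enter. The nonincreasing property of $\xi$ is not strictly needed here, though it is consistent with the statement.

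Finally I combine everything: the left side equals the right side for each $h$; take $\liminf/\limsup$ as $h\searrow0$; and rearrange to isolate $\int\int\Gamma(\Theta)\xi v_z^2$. The remaining work is to track the exact constants and signs, including the factor $\tfrac12$ on the $\beta$-term stated in \eqref{Xi1}. My derivation naturally produces $\beta\int\int\Theta v_z\xi$ and $\int\int\rho_t\xi v^2$ with coefficient one, which I will reconcile with the stated coefficients. I expect this bookkeeping, together with justifying the density/admissibility step, to be the main obstacle. The analytic core is the convexity inequality above.
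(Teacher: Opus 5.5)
Your route is the paper's route. You test \eqref{SchwacheKGZ1} with $\varphi=\xi S_h\hat v$, pass to the limit in the $\Gamma$-, $\beta$- and $\rho_t$-terms via \eqref{Shv}--\eqref{Shvx}, and keep the $pu_z$-term as a $\limsup$. The difference-quotient term is handled by $v(v-\hat v(\cdot,t-h))\ge\frac12(v^2-\hat v^2(\cdot,t-h))$ plus the shift $t\mapsto t+h$. That core is correct. Two side remarks:
\begin{itemize}
\item The $pu_z$-term actually converges, for the same reason as the $\Gamma$-term, since $pu_z\xi$ is a fixed $L^2$ function. Keeping the $\limsup$ is therefore harmless.
\item The condition $\xi\ge0$ that you do need is exactly what monotonicity plus compact support of $\xi$ provide. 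So monotonicity is used after all.
\end{itemize}
The bookkeeping you defer is where the proposal needs correcting.

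\textbf{Missing summand.} You have $\varphi_t=\xi' S_h\hat v+\xi\,\frac{\hat v(\cdot,t)-\hat v(\cdot,t-h)}{h}$, and you dropped the first summand. It contributes $\int_0^\infty\!\int_\Omega\rho\xi' v\,S_h\hat v\to\int_0^\infty\!\int_\Omega\rho\xi' v^2$; without it the $\rho\xi'v^2$-term would end up with coefficient $-\frac12$. The initial term must also enter as $+\int_\Omega\rho(\cdot,0)u_{0t}\varphi(\cdot,0)$. This is the sign produced by integrating $\rho v_t\varphi$ by parts in time, and the one used in \eqref{xiAnw1}, despite the minus printed in \eqref{SchwacheKGZ1}. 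With both corrections, the left-hand side has $\liminf$ at least
\[
\int_0^\infty\!\!\int_\Omega(\rho_t\xi+\rho\xi')v^2+\int_\Omega\rho(\cdot,0)u_{0t}^2-\frac12\int_0^\infty\!\!\int_\Omega(\rho\xi)_t v^2-\frac12\int_\Omega\rho(\cdot,0)u_{0t}^2 .
\]
So $\rho_t\xi v^2$, $\rho\xi'v^2$ and the initial term all carry coefficient $\frac12$, exactly as in \eqref{Xi1}. In particular, $\rho_t\xi v^2$ does not keep coefficient one.

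\textbf{The $\beta$-term.} Your coefficient $1$ in front of $\beta\int_0^\infty\!\int_\Omega\Theta v_z\xi$ is the right one, and there is nothing to reconcile. The factor $\frac12$ in \eqref{Xi1}, repeated in the last display of the paper's proof, is a misprint. It could not be reached from coefficient $1$ anyway, since $\Theta v_z\xi$ has no sign. Lemma~\ref{EndAbsch}, into which this lemma feeds, indeed carries coefficient $1$. So prove \eqref{Xi1} with $\beta$ in place of $\frac{\beta}{2}$.

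\textbf{Admissibility.} Your admissibility justification is not correct as written. For $0<t<h$ one has $S_h\hat v(\cdot,t)=\frac1h\int_0^t v(\cdot,s)\,ds+\frac{h-t}{h}\,u_{0t}$. Hence $\varphi_z\in L^2$ requires $u_{0t}\in W^{1,2}(\Omega)$, not merely $u_{0t}\in L^2(\Omega)$. The paper relies on the same thing tacitly through \eqref{DefS} and \eqref{Shvx}, and the last term in \eqref{Xi1} only makes sense under it. So either justify admissibility by Lemma~\ref{SolutionProp} with this regularity made explicit, or regularize $u_{0t}$ on negative times and let $h\searrow0$ before removing the regularization.
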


\begin{proof}
Firstly, due to \eqref{KGZEIG1} and \eqref{ab01}
\bea{gammaThetaL2}
\qquad \qquad\Gamma(\Theta)v_z \in L^2_{loc}(\bom\times[0,\infty)), \quad \beta\Theta \in L^2_{loc}(\bom\times[0,\infty)) \quad \mbox{ and } \quad u_z\in L^2_{loc}(\bom\times[0,\infty)).
\eea
By a standard approximation argument we thus conclude that \eqref{SchwacheKGZ1} even holds for each $\vp\in L^2((0,\infty);$ $W_0^{1,2}(\Om))$ which is such that $\vp_t\in L^2(\Om\times(0,\infty))$ and that $\vp=0$ a.e. on $\Om\times(T,\infty)$ for some $T>0$. It is therefore possible to utilize
$$\vp(z,t):= \xi(t)\cdot (S_h \hat v)(z,t), \qquad (z,t)\in\Om\times(0,\infty)$$
as a test function in \eqref{SchwacheKGZ1}, for which we compute
$$\vp_t(z,t)=\xi'(t) \cdot (S_h\hat v)(z,t) +\xi(t) \cdot \frac{\hat v(z,t)-\hat v(z,t-h)}{h} \qquad \mbox{a.e. on }\Om\times(0,\infty),$$
and 
$$\vp_z(z,t)=\xi(t) \cdot (S_h \hat v_z) (z,t) \qquad \mbox{a.e. on }\Om\times(0,\infty).$$
Thus, \eqref{SchwacheKGZ1} yields
 \bea{xiAnw1}
 \int_0^\infty\io \rho (z,t)   v(z,t)  \xi'(t) \cdot (S_h \hat v) (z,t)dzdt+\frac 1h\int_0^\infty \io  \rho(z,t) v(z,t) \xi(t) \cdot (\hat v(z,t)-\hat v(z,t-h))dzdt \nn\\
 + \int_0^\infty \io \rho_t(z,t) v(z,t) \xi(t) (S_h \hat v)(z,t)dzdt+\io \rho(z,0)u_{0t}(z)^2dz\nn\\
 =  \int_0^\infty \io \big(\Gamma(\Theta(z,t))v_z(z,t)+p(z,t)u_z(z,t)-\beta\Theta\big)\xi(t)\cdot (S_h \hat v_z)(z,t)dzdt,
 \eea
since $\xi(0)=1$ and $(S_h\hat v(z,0))=\frac 1h\int_{-h}^0 u_{0t}(z) dt = u_{0t}(z)$ for a.e. $z\in\Om$ in view of \eqref{hatv}. With respect to Assumptions \ref{vorrfunc}, \eqref{KGZEIG1} and \eqref{Shv}, we infer from  $ v \in L^2_{loc} (\bom\times[0,\infty))$,  $\rho\in C^2((0,T);C(\Om))$ and $\xi'(t)\in C^\infty_0([0,\infty))$
that 
\bea{KGZ_I1}\qquad \int_0^\infty\io \rho(z,t) \xi'(t) \cdot  (S_h\hat v)(z,t)dz dt \rightarrow \int_0^\infty \io \rho(z,t)\xi'(t) v^2(z,t) dz dt \quad \mbox{ as }h\searrow0.
\eea
In similar fashion, we conclude 
\bea{KGZ_I2}
\int_0^\infty \io \rho_t(z,t) v(z,t) \xi(t)(S_h \hat v)(z,t) dz dt \to \int_0^\infty \io \rho_t(z,t) \xi(t) v^2(z,t) dzdt
\eea
as $h\searrow0$, while from \eqref{gammaThetaL2}, we also obtain that
\bea{KGZ_I4}
\int_0^\infty \io \Gamma(\Theta(z,t)) v_z(z,t) (S_h \hat v_z)(z,t) dz dt \to 
\int_0^\infty \io \Gamma(\Theta(z,t))v_z^2(z,t) dz dt 
\eea
and
\bea{KGZI5}
\int_0^\infty \io \beta\Theta(z,t) (S_h \hat v_z)(z,t) \xi(t) dz dt \to 
\int_0^\infty\io \beta\Theta(z,t)v_z(z,t)\xi(t) dz dt 
\eea
as $h\searrow0$. We consider the left-hand side of \eqref{xiAnw1} once more, to infer by an application of Young's inequality and a linear substitution with respect to $\xi,\rho\ge0$,
\begin{align*}
    &\quad -\frac{1}{h} \int_0^\infty \io \rho(z,t)v(z,t) \cdot(\hat v(z,t)-\hat v(z,t-h)) dzdt \\
    &= \ -\frac{1}{h} \int_0^\infty \io \rho(z,t) \xi(t) v^2(z,t) dz dt + \frac 1h\int_0^\infty \io \rho(z,t) \xi(t) v(z,t) \hat v(z,t-h) dz dt \\
    &\le \ -  \frac{1}{2h} \int_0^\infty \io \rho(z,t) \xi(t) v^2(z,t) dz dt+\frac{1}{2h} \int_0^\infty \io \rho(z,t) \xi(t) \hat v^2(z,t-h) dz dt\\
    & \ = \ -  \frac{1}{2h} \int_0^\infty \io \rho(z,t) \xi(t) v^2(z,t) dz dt+\frac{1}{2h} \int_0^\infty \io \rho(z,t+h) \xi(t+h) v^2(z,t) dz dt \\
    & \quad +\frac 1{2h}\int_0^\infty \io \rho(z,t) \xi(t+h) v^2(z,t) dz dt -\frac 1{2h}\int_0^\infty \io \rho(z,t) \xi(t+h) v^2(z,t) dz dt  \\
   & \quad +\frac 1{2h} \int_{-h}^0 \io \rho(z,t+h) \xi(t+h) u^2_{0t}(z) dz dt \\
    &\ = \  \frac 12 \int_0^\infty \io \frac {\xi(t+h)-\xi(t)}h \rho(z,t) v^2(z,t)  + \frac 12\int_0^\infty \io \frac{\rho(z,t+h)-\rho(z,t)}{h}\xi(t+h) v^2(z,t) dz dt \\
    &\quad +\frac 1{2h}  \int_{-h}^0 \io \rho(z,t+h) \xi(t+h) u^2_{0t}(z) dz dt.
\end{align*}
Since it can easily be seen that $\frac{\xi(t+h)-\xi(t)}h \overset{\star}{\rightharpoonup} \xi'(t)$  and  $\frac{\rho(z,t+h)-\rho(z,t)}{h}\overset{\star}{\rightharpoonup} \rho_t(z,t)$ in $L^\infty((0,\infty))$ for all $z\in\Om$ as well as $\int_{-h}^0\xi(t+h)\rho(z,t+h) \to \xi(0)\rho(z,0)=\rho(z,0)$ for all $z\in\Om$ as $h\searrow0$.
Thus, we are able to conclude 
\begin{align*}
&\limsup_{h\searrow 0} \left\{ \frac1h \int_0^\infty \io \rho(z,t)\xi(t)\cdot(\hat v(z,t)-\hat v(z,t-h)) dz dt \right\}\\
&\ge -\frac 12\int_0^\infty \io \big(\rho_t(z,t) \xi(t)+\rho(z,t)\xi'(t)\big) v^2(z,t) -\frac12\io \rho(z,0)u_{0t}^2(z) dz 
\end{align*}
and further 
\begin{align*}
    \int_0^\infty \io \Gamma(\Theta(z,t)) \xi(t) v_z^2(z,t) dz dt
    &\ge \frac12 \int_0^\infty \io \big(\rho_t(z,t) \xi(t)+\rho(z,t)\xi'(t)\big) v^2 (z,t) dz dt\\
    & \quad +\frac 12\io \rho(z,0) u_{0t}^2(z) dz +\frac12 \int_0^\infty \io \beta\Theta(z,t) v_z(z,t)\xi(t)dzdt\\
    &\quad -\limsup_{h\searrow0} \int_0^\infty\io p(z,t) u_z(z,t) \xi(t)(S_h \hat v_z) dz dt,  
\end{align*}
which completes our proof.
\end{proof}
\noindent To estimate the last limit in our argument on the right hand side of \eqref{Xi1}, we take advantage of the definition of $S_h$ and the assumed regularity of $p\in C^1(\bom\times[0,\infty))$.
\begin{lemma}\label{GesamtAbsch2}
     Let $(u,v,\Theta)$, $\hat v$ and $S_h$ be as in Lemma \ref{KGZ1terTeil} and \ref{SolutionProp}, and let $\xi \in C^\infty_0([0,\infty))$  be such that $\xi'\le 0$ and $\xi(0)=1$, then
\begin{align}\label{KGZ_LAST_INT}
\limsup_{h\searrow0} &\int_0^\infty\io p(z,t) u_z(z,t)\xi(t)(S_h \hat v_z) dz dt \nn\\
&\le -\frac12 \int_0^\infty\io (\xi'(t) p(z,t)+\xi(t)p_t(z,t)) u_z^2(z,t)dz dt-\frac 12\io p(z,0) u_{0z}^2(z)dz 
\end{align}
\end{lemma}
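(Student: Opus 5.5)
The plan is to exploit the identity \eqref{DefS}, which expresses $S_h\hat v_z$ as a backward difference quotient of $\hat u_z$. Substituting it into the integral, we obtain for each $h>0$
\begin{align*}
I_h:=\int_0^\infty\io p\,u_z\,\xi\,(S_h\hat v_z)
=\frac1h\int_0^\infty\io p(z,t)\xi(t)\,u_z(z,t)\big(u_z(z,t)-\hat u_z(z,t-h)\big)\,dz\,dt .
\end{align*}
Then I will apply Young's inequality in the form $a(a-b)\ge \frac12 a^2-\frac12 b^2$, now in the opposite direction to Lemma \ref{GesamtAbsch1}: $a(a-b)=a^2-ab$ and $ab\le\frac12a^2+\frac12b^2$, so $a(a-b)\ge\frac12a^2-\frac12b^2$. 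This lower bound is the wrong direction for a $\limsup$ upper bound, so the simple route fails and I must be careful.

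The right approach is to use exact algebra. Write $a(a-b)=\frac12a^2-\frac12b^2+\frac12(a-b)^2$, so that
\begin{align*}
I_h=\frac1{2h}\int_0^\infty\io p\xi\big(u_z^2-\hat u_z^2(\cdot,t-h)\big)+\frac{h}{2}\int_0^\infty\io p\xi\,(S_h\hat v_z)^2 .
\end{align*}
The first part is handled by the shift $t\mapsto t+h$ in the $\hat u_z^2(\cdot,t-h)$ term, exactly as in Lemma \ref{GesamtAbsch1}. It equals
\begin{align*}
-\frac12\int_0^\infty\io \frac{p(z,t+h)\xi(t+h)-p(z,t)\xi(t)}{h}\,u_z^2-\frac1{2h}\int_{-h}^0\io p(z,t+h)\xi(t+h)\,\hat u_z^2(z,t)\,dz\,dt .
\end{align*}
Since $p\in C^1(\bom\times[0,\infty))$ and $\xi\in C_0^\infty$, the difference quotient converges uniformly on the support to $(p\xi)_t=\xi'p+\xi p_t$, and $u_z^2\in L^1_{loc}$ by \eqref{KGZEIG1}. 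For $t\in(-h,0)$ we have $\hat u_z=u_{0z}+tu_{0tz}$, which requires $u_{0t}\in W^{1,2}$; otherwise I would use only $\hat u_z(\cdot,t)\to u_{0z}$ via continuity. The boundary term then tends to $-\frac12\io p(z,0)u_{0z}^2$. These two limits give precisely the right-hand side of \eqref{KGZ_LAST_INT}.

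The main obstacle is the remainder $\frac h2\int\!\!\io p\xi(S_h\hat v_z)^2$, which is nonnegative and therefore works against the desired inequality. We have $\|S_h\hat v_z\|_{L^2(\Om\times(0,T))}\le \|\hat v_z\|_{L^2(\Om\times(-h,T))}$, and this is bounded by \eqref{KGZEIG1} together with $\hat v_z=u_{0tz}$ on $t<0$ (again needing some regularity of the initial velocity, or a cutoff argument). Hence the remainder is $O(h)$ and vanishes as $h\searrow0$. Combining the three limits then yields \eqref{KGZ_LAST_INT}, in fact with equality as a limit. The sign hypothesis $\xi'\le0$ is not needed for this particular step.
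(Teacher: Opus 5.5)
Your argument is correct and is essentially the paper's. Both substitute \eqref{DefS}, isolate $\frac{1}{2h}\int_0^\infty\io p\xi\,\bigl(u_z^2-\hat u_z^2(\cdot,t-h)\bigr)$, shift $t\mapsto t+h$ and pass to the limit using $p\in C^1$, and discard the $O(h)$ remainder $h\int_0^\infty\io p\xi\,(S_h\hat v_z)^2$ via the $L^2$-boundedness coming from \eqref{Shvx}. The paper instead writes $a(a-b)=b(a-b)+(a-b)^2$ and applies Young's inequality to $b(a-b)$, which uses $\xi\ge0$, i.e.\ $\xi'\le0$ together with compact support. Your exact identity $a(a-b)=\frac12a^2-\frac12b^2+\frac12(a-b)^2$ gives equality in the limit and needs no sign on $\xi$, and your caveat about needing $u_{0t}\in W^{1,2}$ on $t<0$ is equally implicit in the paper, which takes Lemma \ref{SolutionProp} as given.
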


\begin{proof}
In view of \eqref{hatu}  and \eqref{DefS}, we reformulate
\bea{LI1}
& &\int_0^\infty\io p(z,t) u_z(z,t)\xi(t)(S_h \hat v_z) dz dt \nn\\
& &\quad = \frac 1h \int_0^\infty \io \xi(t) p(z,t) \hat u_z(z,t)\cdot\left\{\hat u_z(z,t)-\hat u_z(z,t-h) \right\} dz dt\nn\\
& &\quad =\frac 1h \int_0^\infty \io \xi(t) p(z,t) \hat u_z(z,t-h) \cdot \left\{ \hat u_z(z,t)-\hat u_z(z,t-h)\right\}
dz dt\nn\\
& &\quad \quad + \frac 1h \int_0^\infty\io \xi(t) p(z,t) \left\{\hat u_z(z,t)-\hat u_z(z,t-h)\right\}^2 dz dt\nn\\
& &\quad= \frac1h \int_0^\infty \io \xi(t) p(z,t) \hat u_z(z,t-h) \cdot\left\{\hat u_z(z,t)-\hat u_z(z,t-h)\right\}dz dt\nn\\
& &\quad \quad +h\int_0^\infty \io \xi(t) \cdot (S_h \hat v_z)^2 (z,t) dzdt \qquad \mbox{ for all }h>0.\nn
\eea    
Here we can proceed for the second last as term as in Lemma \eqref{GesamtAbsch1}
to obtain by an application of Young's inequality and a substitution 
\begin{align*}
\frac1h \int_0^\infty &\io \xi(t)  p(z,t) \hat u_z(z,t-h) \cdot\left\{\hat u_z(z,t)-\hat u_z(z,t-h)\right\}dz dt \\
&= \frac1h \int_0^\infty \io \xi(t) p(z,t) \hat u_z(z,t-h)\hat u_z(z,t) dz dt \\
&\quad -\frac1h \int_0^\infty \io \xi(t) p(z,t) \hat u_z^2(z,t-h) \\
&\le \frac{1}{2h}\int_0^\infty \io \xi(t) p(z,t)\hat u_z^2(z,t) dz dt-\frac 1{2h} \int_0^\infty\io \xi(t) p(z,t)\hat u_z^2(z,t-h)dzdt\\
&\le \frac 1{2} \int_0^\infty \io \frac{\xi(t)-\xi(t+h)}h p(z,t) u_z^2(z,t) dz dt\\
& \quad +\frac 12 \int_0^\infty \io \xi(t+h)\frac{p(z,t)-p(z,t+h)}{h} u_z^2(z,t) dz dt\\
& \quad +\frac1{2h} \int_{-h}^0\io \xi(t+h)p(z,t+h)  u_{0z}^2(z) dz dt \\
&\to -\frac12 \int_0^\infty \io \xi'(t) p(z,t) u_z^2(z,t) dz dt -\frac12 \int_0^\infty \io \xi(t)p_t(z,t)  u_z^2(z,t) dz dt \\
&\quad -\frac12\io p(z,0) u_{0z}^2(z) dz dt \qquad \mbox{ as }h\searrow0.
\end{align*}
Since \eqref{Shvx} ensures
$$\limsup_{h\searrow0} \int_0^T \io (S_h \hat v_z)^2 (z,t)dz dt <\infty \qquad \mbox{ for all }T>0,$$
we may conclude
$$h\int_0^\infty \io \xi(t) \cdot (S_h \hat v_z)^2(z,t) dz dt\to 0 \qquad \mbox{ as }h\searrow0.$$
and thus from \eqref{LI1} the claim.
\end{proof}
\noindent Collecting our accomplishments, we note the following Lemma.
\begin{lemma}\label{EndAbsch}
Let $(u,v,\Theta)$, $\hat v$ and $S_h$ be as in Lemma \ref{KGZ1terTeil} and \ref{SolutionProp}, and let $\xi \in C^\infty_0([0,\infty))$  be such that $\xi'\le 0$ and $\xi(0)=1$, then

\begin{align}\label{GesamtAbsch}
&\qquad\frac 12\io \rho(z,0) u_{0t}^2(z) dz +\frac 12\io p(z,0) u_{0z}^2(z)dz  +\int_0^\infty \io \xi(t) \beta \Theta(z,t)  v_z(z,t)dz dt \nn\\
&\le\int_0^\infty \io \xi(t) \Gamma(\Theta(z,t)) v_z^2(z,t) dz dt -\frac12 \int_0^\infty \io \big(\xi'(t) p(z,t) + \xi(t)p_t(z,t)\big)u_z^2(z,t) dz dt \nn\\
&\qquad -\frac12 \int_0^\infty \io \big(\rho_t(z,t) \xi(t)+\rho(z,t)\xi'(t)\big) v^2 (z,t) dz dt
\end{align}
    
\end{lemma}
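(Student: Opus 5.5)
Lemma \ref{EndAbsch} follows by inserting the upper bound of Lemma \ref{GesamtAbsch2} into the lower bound of Lemma \ref{GesamtAbsch1} and rearranging. Both lemmas apply to the same class of $\xi\in C^\infty_0([0,\infty))$ with $\xi'\le 0$ and $\xi(0)=1$, so no further restriction on test functions is needed. In \eqref{Xi1} the $p$-dependent term enters as $-\limsup_{h\searrow0}\int_0^\infty\io p u_z\xi (S_h\hat v_z)$. Replacing this $\limsup$ by the larger quantity on the right-hand side of \eqref{KGZ_LAST_INT} makes the right-hand side of \eqref{Xi1} smaller, so the inequality is preserved. This gives
\begin{align*}
\int_0^\infty\io \Gamma(\Theta)\xi v_z^2 &\ge \frac12\int_0^\infty\io(\rho_t\xi+\rho\xi')v^2+\frac12\io\rho(\cdot,0)u_{0t}^2+\frac12\int_0^\infty\io\beta\Theta v_z\xi\\
&\quad+\frac12\int_0^\infty\io(\xi' p+\xi p_t)u_z^2+\frac12\io p(\cdot,0)u_{0z}^2 .
\end{align*}

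The next step is to move the two space--time integrals with $v^2$ and $u_z^2$ to the left-hand side, which yields \eqref{GesamtAbsch} up to the coefficient of the $\beta\Theta v_z$ term. Every term is finite: by \eqref{KGZEIG1} we have $v\in L^\infty_{loc}L^2$, $u_z\in L^\infty_{loc}L^2$, $v_z\in L^2_{loc}$ and $\Theta\in L^2_{loc}$, and $\xi$ has compact support. Hence the rearrangement involves only finite quantities and no $\infty-\infty$ cancellation occurs.

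The main obstacle is this coefficient. Lemma \ref{GesamtAbsch1} records it as $\frac12$, whereas \eqref{GesamtAbsch} states it with coefficient $1$. The resolution is to re-derive it from \eqref{xiAnw1}: there the term $-\int\beta\Theta\xi S_h\hat v_z$ appears with full weight, and by \eqref{KGZI5} it converges to $-\int_0^\infty\io\beta\Theta v_z\xi$. The factor $\frac12$ in \eqref{Xi1} should therefore be read as $1$, which is what the derivation actually produces. Only the quadratic terms coming from the Steklov difference quotients acquire the factor $\frac12$ through Young's inequality. With coefficient $1$, the rearranged inequality is exactly \eqref{GesamtAbsch}.
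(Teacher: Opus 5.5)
Your proposal is correct and is the paper's own argument. The paper's proof consists only of inserting Lemma \ref{GesamtAbsch2} into Lemma \ref{GesamtAbsch1} and rearranging, as you do.

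Your correction of the factor $\frac12$ in front of the $\beta\Theta v_z\xi$ term in \eqref{Xi1} to $1$ is also right. That is what \eqref{xiAnw1} together with \eqref{KGZI5} actually yields, and it is the coefficient needed later in the proof of Lemma \ref{lem_sqrtL2}. This fills in a point the paper's one-line proof passes over silently.
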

\begin{proof}
 By combining Lemma \ref{GesamtAbsch1} and Lemma \ref{GesamtAbsch2}, the claim follows.   
\end{proof}
\noindent We are now in a position to derive the final ingredient in verifying \eqref{weak2} from an lower semicontinuity argument for weak convergence.
\begin{lemma}\label{lem_sqrtL2}
    Let $(u,v,\Theta)$ and $(\eps_j)_{j\in\N}$ be as in Lemma \ref{KGZ1terTeil} and let $T>0$. Then 
\bea{sqrtL2}
\sqrt{\Gamma_\eps(\Theta_\eps)}v_{\eps z} \to \sqrt{\Gamma(\Theta)} v_z \qquad \mbox{ in }L^2(\Om\times(0,T)) \qquad \mbox{ as }\eps=\eps_j\searrow0.
\eea
\end{lemma}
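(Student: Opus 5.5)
The plan is the standard weak-convergence-plus-norm-convergence argument: show $\sqrt{\Gamma_\eps(\Theta_\eps)}v_{\eps z}\rightharpoonup\sqrt{\Gamma(\Theta)}v_z$ weakly in $L^2$, then show that the norms cannot drop in the limit. The upper bound on the norms comes from the $\eps$-level energy identity, and the lower bound comes from Lemma \ref{EndAbsch}. By uniform convexity of $L^2$ this gives strong convergence.

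\emph{Step 1: weak convergence.} By \eqref{ab1}, $\sqrt{\Gamma_\eps(\Theta_\eps)}\le\sqrt{C_\Gamma}$. By \eqref{ThetaKgz} and \eqref{KGZ_Gamma}, $\sqrt{\Gamma_\eps(\Theta_\eps)}\to\sqrt{\Gamma(\Theta)}$ a.e., and hence strongly in every $L^q_{loc}$ by dominated convergence. Combined with $v_{\eps z}\rightharpoonup v_z$ in $L^2$ from \eqref{vzKgz}, this gives the product weak convergence on $\Om\times(0,T)$, and also on $\Om\times(0,\infty)$ when weighted by any bounded $\xi(t)$ with compact support.

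\emph{Step 2: limsup of the weighted dissipation.} Fix a nonincreasing $\xi\in C_0^\infty([0,\infty))$ with $\xi(0)=1$. Multiply \eqref{vepsueps} by $\xi$ and integrate in time, integrating by parts in the $\frac{d}{dt}$ terms. Drop the nonnegative terms $\eps\int\xi\io v_{\eps zz}^2$ and $\eps\int\xi\io p_\eps u_{\eps zz}^2$. The cross term $\eps\int\xi\io p_{\eps z}u_{\eps z}u_{\eps zz}$ is $O(\sqrt\eps)$ by \eqref{t6} and \eqref{t9}. This yields
\[
\int_0^\infty\!\!\io \xi\Geps(\Theta_\eps)v_{\eps z}^2\le \tfrac12\io\rho_\eps(\cdot,0)v_{0\eps}^2+\tfrac12\io p_\eps(\cdot,0)u_{0\eps z}^2+\int_0^\infty\!\!\io\Big(\tfrac12(\xi'\rho_\eps+\xi\rho_{\eps t})v_\eps^2+\tfrac12(\xi' p_\eps+\xi p_{\eps t})u_{\eps z}^2+\beta\xi\Theta_\eps v_{\eps z}\Big)+O(\sqrt\eps).
\]
Now let $\eps=\eps_j\searrow0$. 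The initial terms converge by \eqref{initkonv}. The $v_\eps^2$ terms converge by \eqref{vKgz}. The term $\beta\xi\Theta_\eps v_{\eps z}$ converges by \eqref{ThetaKgz}, since $q=2<3$, together with \eqref{vzKgz}.

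\emph{Step 3 (main obstacle): the $u_{\eps z}^2$ terms.} These converge only weakly via \eqref{uzKgz}. The $\xi'p_\eps u_{\eps z}^2$ part has the right sign: because $\xi'\le0$, lower semicontinuity gives $\limsup\int\xi'p_\eps u_{\eps z}^2\le\int\xi'p u_z^2$. The $\xi p_{\eps t}u_{\eps z}^2$ part has no sign, and I expect it to be the real difficulty. My first attempt is to upgrade to strong convergence $u_{\eps z}\to u_z$ in $L^2_{loc}$. For this I would use the second equation of \eqref{approx} differentiated in $z$, namely $u_{\eps zt}=\eps u_{\eps zzz}+v_{\eps z}$, together with the analogous Steklov comparison, which shows that $\io p u_z^2$ is continuous in time. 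If that fails, the fallback is to write $p_{\eps t}=p_{\eps t}^+-p_{\eps t}^-$ and absorb the positive part into $\xi'$. Concretely, choose $\xi$ of the form $e^{-Kt}\zeta(t)$ with $K\ge\sup|p_t|/\inf p$, so that $\xi'p+\xi p_t\le0$. Then the combined weak limsup again goes in the right direction, using weak lower semicontinuity of $\int w\,u_{\eps z}^2$ with $w=-(\xi'p_\eps+\xi p_{\eps t})\ge0$ and $p_\eps\to p$, $p_{\eps t}\to p_t$ uniformly by \eqref{KGZ_P} and \eqref{KGZ_Pt}. The same device handles $\rho_t$, although that term already converges strongly.

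\emph{Step 4: conclusion.} Compare the resulting limsup with the reverse inequality \eqref{GesamtAbsch}, applied with the same $\xi$; its right-hand side has exactly the matching terms. This gives
\[
\limsup_{j}\int\!\!\io\xi\Gamma_{\eps_j}(\Theta_{\eps_j})v_{\eps_j z}^2\le\int\!\!\io\xi\Gamma(\Theta)v_z^2.
\]
Take $\xi\equiv e^{-Kt}$ on $[0,T]$, which is positive there. Weighted norm convergence on $(0,T)$ together with Step 1 then gives strong convergence in $L^2(\Om\times(0,T))$ of $\sqrt{\xi}\sqrt{\Gamma_\eps(\Theta_\eps)}v_{\eps z}$. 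Finally, divide by $\sqrt\xi\ge e^{-KT/2}$.
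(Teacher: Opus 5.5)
Your proposal is correct and follows essentially the paper's proof: weak convergence of $\sqrt{\Gamma_\eps(\Theta_\eps)}v_{\eps z}$, the $\xi$-weighted energy identity \eqref{vepsueps}, and an exponential weight $\xi=e^{-\kappa t}\Psi(t)$ chosen so that $\xi'p+\xi p_t\le 0$, which lets weak lower semicontinuity handle the $u_{\eps z}^2$ terms. Lemma \ref{EndAbsch} then forces norm convergence, and the differences are only organizational: the paper bounds $\liminf I_{1,\eps}$ and $\liminf I_{2,\eps}$ separately and argues by contradiction, treats the $v_\eps^2$ terms by lower semicontinuity rather than strong convergence, and never needs your tentative \emph{first attempt} via strong convergence of $u_{\eps z}$, since your fallback is exactly what closes the argument.
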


\begin{proof}
Let $T>0$, due to $0<p,\rho\in C^1(\bom\times[0,T])$, as noted in Assumptions \ref{vorrfunc} and the Preliminaries, one can find $c_1>0$ such that
$$\frac{p_t(z,t)}{p(z,t)}\le c_1 \quad \mbox{ and }\quad \frac{\rho_t(z,t)}{\rho(z,t)}\le c_1$$
holds for all $x\in\Om$ and $t\in[0,T]$. Furthermore, we fix $\Psi(t)\in C^\infty_0([0,\infty))$ with $\Psi(0)=1$, $[0,T+1]\subset\mbox{supp} \Psi $ and $-1\le\Psi'< 0$ on $[0,T]$ as well as 
$$\kappa:=c_1+1,$$
then $\xi(t)=e^{-\kappa t}\cdot \Psi(t)$ fulfills $\xi\in C^\infty_0([0,\infty))$, $\xi(0)=1$, $\xi'(t)=\Psi'(t) e^{-\kappa t}-\kappa e^{-\kappa t} \Psi(t)\le 0$ and $ \xi \ge c_0$ on $t\in[0,T]$ for some $c_0\ge0$. 
In view of \eqref{KGZ_Gamma}, \eqref{ThetaKgz},\eqref{vzKgz}, the boundedness of $\xi$ and of $\mbox{supp }\xi$ we may infer
\bea{schwacheKGZGammaVz}
\sqrt{\Gamma_\eps(\Theta_\eps)}v_{\eps z} \rightharpoonup \sqrt{\Gamma(\Theta)}v_z
\quad\mbox{ in }L^2(\Om\times(0,T))\qquad\mbox{ as }\eps\searrow0.\eea
By defining
\bea{liminf1}
I_{1,\eps} := \int_0^\infty \io \xi(t) \Gamma_\eps(\Theta_\eps(z,t))v_{\eps z}^2(z,t) dz dt, \qquad \eps\in(0,1),
\eea
we infer due to the positivity of $\Gamma$ and $\xi$ from the lower semicontinuity of weak convergence in $L^2(\Om)$
\bea{liminf1b}
\liminf_{\eps=\eps_j \searrow0} I_{1,\eps}\ge I_1:=\int_0^\infty \io \xi(t) \Gamma(\Theta(z,t))v_{ z}^2(z,t) dz dt,
\eea
and for
\bea{liminf2}
I_{2,\eps} &:=& -\frac 12\int_0^\infty \io \big(\xi'(t) p_\eps(z,t)+\xi(t) p_{\eps t}(z,t)\big) u_{\eps z}^2(z,t) dz dt\\
& & -\frac12 \int_0^\infty \io (\xi'(t)\rho_\eps(z,t)+\xi(t)\rho_{\eps t}(z,t)) v_\eps^2(z,t) dz dt 
, \qquad \eps\in(0,1),\nn
\eea
our choice of $\kappa$ ensures 
$$ \xi'(t)p_\eps+\xi(t)(p_{\eps t}+1) \le 0\quad \mbox{ as well as }\quad \xi'(t)\rho_\eps(z,t)+\xi(t)\rho_{\eps t}(z,t)\le0$$
for all $(z,t)\in\Om\times[0,\infty)$ and thus moreover $I_{2,\eps}\ge 0$. Utilizing the lower semicontinuity of weak convergence in $L^2(\Om)$ once more, we gather
\bea{liminf2b}
\liminf_{\eps=\eps_j \searrow0} I_{2,\eps}\ge I_2&:=& -\frac 12 \int_0^\infty \io \big(\xi(t)p_t(z,t)+\xi'(t)p(z,t)\big) u_z^2(z,t)dzdt\nn\\
& & -\frac12 \int_0^\infty \io \big(\xi'(t)\rho(z,t)+\xi(t)\rho_t(z,t)\big) v^2 (z,t) dz dt
\eea
Recalling \eqref{vepsueps} from Lemma \ref{vepsuepslem},  we are able to conclude
\begin{align*}
&\quad\frac12\int_0^\infty \xi(t) \cdot \left\{ 
\dt \io \rho_\eps(z,t) v_\eps^2(z,t) dz
\right\} dt + \frac12\int_0^\infty \xi(t) \cdot \left\{ \dt \io p_{\eps}(z,t) u_{\eps z}^2(z,t) dz \right\} dt\nn\\ 
&\quad + \int_0^\infty\xi(t)\io \Geps v_{\eps z}^2 +\eps \int_0^\infty  \xi(t) \io v_{\eps zz}^2 +\eps \int_0^\infty \xi(t) \io p_\eps u_{\eps zz}^2 +\eps \int_0^\infty\xi(t)\io p_{\eps z} u_{\eps z} u_{\eps zz}\\ &=   \int_0^\infty\xi(t)\io \beta \Theta_\eps v_{\eps z} dz dt + \frac{1}{2}\int_0^\infty \xi(t)\io \rho_{\eps t}v_\eps^2+\frac 12\int_0^\infty  \xi(t) \io  p_{\eps t} u_{\eps z}^2, \nonumber
\end{align*}
wherefore an integration by parts in the first two integrals yields
\begin{align*}
    \frac12 \int_0^\infty \xi(t) &\left\{\dt \io \rho_\eps(z,t) v_\eps^2(z,t) dz\right\} dt \\
    &\quad = -\frac12\int_0^\infty \io \xi'(t) \rho_\eps(z,t) v_\eps^2 (z,t) dz dt - \frac12\io \rho_\eps(z,0) v_{0\eps}^2(z)  dz \qquad \mbox{ for all }\eps\in(0,1),
\end{align*}
and 
\begin{align*}
  \frac12 \int_0^\infty \xi(t) &\left\{\dt \io p_\eps u_{\eps z}^2(z,t) dz\right\} dt \\
    &\quad = -\frac12\int_0^\infty \io \xi'(t)p_\eps u_{\eps z}^2 (z,t) dz dt -\frac12 \io  p_\eps(z,0) u_{0\eps z}^2(z)  dz \qquad \mbox{ for all }\eps\in(0,1).
\end{align*}
Thus, we obtain for every $\eps\in(0,1)$ by reordering
\begin{align}\label{I1+I2}
    I_{1,\eps} +I_{2,\eps} &=  
\int_0^\infty \io \xi(t) \beta \Theta_\eps(z,t)dzdt + \io \rho_\eps(z,0) v_{0\eps}^2 (z) dz + \io p_\eps(u,0) u_{0 \eps z}^2 (z) dz\nn \\
&\quad -\eps \int_0^\infty \io \xi(t) v_{\eps zz}^2(z,t)dzdt -\eps \int_0^\infty \io \xi(t) p_\eps(z,t) u_{\eps zz}^2(z,t) dzdt\nn\\
& \quad -\eps \int_0^\infty\io \xi(t) p_{\eps z}(z,t) u_{\eps z}(z,t) u_{\eps zz}(z,t) dzdt.
\end{align}
Since $p_{\eps}$ satisfies \eqref{ab5.1} and \eqref{KGZ_Pz} there exists $c_2(T)>0$ such that
$\|p_{\eps z}\|_{L^\infty(\Om)}\le c_1 $
and by Young's inequality we conclude for every $\eps\in(0,1)$
\begin{align*}
-\eps \int_0^\infty \io \xi(t) p_{\eps z}(z,t) u_{\eps z}(z,t) &u_{\eps zz}(z,t) dzdt \\
&\le \eps c_1 \int_0^\infty \io u_{\eps zz}^2(z,t) dzdt+ \frac{\eps c_2}{c_1}\int_0^\infty\io \xi(t) u_{\eps z}^2 (z,t) dzdt
\end{align*}
and furthermore
\begin{align*}
    \io \rho_\eps(z,0) v_{0\eps}^2(z)dz \to \io \rho (z,0) u_{0t}^2(z) dz 
\quad \mbox{ and }\quad
\io p_{\eps}(z,t) u_{0 \eps z}^2 (z) dz \to \io p(z,t) u_{0 z}^2 (z) dz
\end{align*}
as $\eps=\eps_j\searrow0$ due to \eqref{initkonv}.
Since $\mbox{supp }  \xi$ is bounded we infer from \eqref{t6} in Lemma \ref{MassenAbsch}
$$\eps \frac{c_2}{c_1}\int_0^\infty \io \xi(t) u_{\eps z}^2(z,t)dz dt \to 0 $$
as $\eps=\eps_j\searrow0$ and since \eqref{ab5.1} ensures
$$-\eps \int_0^\infty \io \xi(t) (p_{\eps}(z,t)-c_1) u_{\eps zz}^2(z,t)dz dt-\eps \int_0^\infty \io \xi(t)  v_{\eps zz}^2(z,t) dz dt\le 0 $$
as for all $\eps\in(0,1)$. In view of \eqref{uzKgz} and \eqref{ThetaKgz} we also obtain
$$\int_0^\infty \io \xi(t)\beta\Theta_\eps(z,t)v_{\eps z}(z,t) dzdt \to \int_0^\infty \io  \xi(t)\beta \Theta(z,t) v(z,t) dzdt \quad \mbox{ as }\eps\searrow0.$$
From $\eqref{I1+I2}$ it follows that 

\begin{align*}
\limsup_{\eps=\eps_j\searrow0} (I_{1,\eps}+I_{2,\eps}) \le I_3&:= \int_0^\infty \beta \Theta(z,t) v_z(z,t) dzdt
 + \io \rho(z,0) u_{0t}^2(z)dz +\io p(z,0)u_{0z}^2(z) dz
\end{align*}
and by applying Lemma \ref{EndAbsch}
\begin{align}\label{limsupI_1+I_2}
    \limsup_{\eps=\eps_j\searrow0} (I_{1,\eps} +I_{2,\eps}) &\le \int_0^\infty \io \xi(t) \Gamma(\Theta(z,t))v_z^2 dz dt -\frac12 \int_0^\infty \io \big(\xi'(t) p(z,t)  + \xi(t) p_t(z,t)\big)u_z^2(z,t) dz dt \nn\\
& \quad -\frac12 \int_0^\infty \io \big(\rho_t(z,t) \xi(t)+\rho(z,t)\xi'(t)\big) v^2 (z,t) dz dt\nn\\
&= I_1+I_2
\end{align}
Recalling $\eqref{liminf1b}$ and $\eqref{liminf2b}$ this is only possible if 
\bea{I1Konvergenz}I_{1,\eps}\to I_1 \qquad \mbox{ as }\eps=\eps_j\searrow0,\eea
since otherwise \eqref{liminf1} and \eqref{liminf2} would imply that there would exist a constant $c_3>0$ and a subsequence $(\eps_{j_l})_{l\in\N}\subset(\eps_j)_{j\in\N}$,
such that 
$$I_{1,\eps}\ge I_1+c_3 \quad \mbox{ and }\quad I_{2,\eps} \ge I_2-\frac{c_3}{2} \qquad \mbox{ for all }\eps\in(\eps_{j_l})_{l\in\N},$$
which would result in 
$$I_{1,\eps}+I_{2,\eps}\le I_1+I_2+\frac{c_3}{2} \qquad\mbox{ for all } \eps \in(\eps_{j_l})_{l\in\N}$$
and thereby contradict $\eqref{limsupI_1+I_2}$.
In view of the definitions of $(I_{1,\eps})_{\eps\in(0,1)}$ and $I_1$ in \eqref{liminf1} and \eqref{liminf1b}, the convergence in \eqref{I1Konvergenz} along with \eqref{schwacheKGZGammaVz} it readily follows that
$$\sqrt{\xi(t)\Gamma_\eps(\Theta_\eps)}v_{\eps z} \to \sqrt{\xi(t)\Gamma(\Theta)}v_z
\qquad\mbox{ in }L^2(\Om\times(0,\infty)) \qquad\mbox{ as }\eps\searrow0.$$
and thus
$$ \int_0^T\io \left|\sqrt{\Gamma_\eps(\Theta_\eps)}v_{\eps z}-\sqrt{\Gamma(\Theta) }v_z\right|^2\le \int_0^\infty \io \frac{\sqrt{\xi(t)}}{c_0}\left|\sqrt{\Gamma_\eps(\Theta_\eps)}v_{\eps z}-\sqrt{\Gamma(\Theta) }v_z\right|^2\to 0 $$
as $\eps=\eps_j\searrow0$, which completes the proof.

\end{proof}
\noindent Now we are in a position to take the final step in proving Theorem \ref{EX}.\\
\textit{Proof of Theorem \ref{EX}.}\quad Since Lemma \ref{KGZ1terTeil} ensures the existence of functions $(u,v,\Theta)$ such that \ref{KGZEIG1} is satisfied, that $u_t=v$ a.e. on $\Om\times(0,\infty)$ and that \eqref{weak1} holds for all $\vp\in C^\infty_0(\bom\times[0,\infty))$, the verification of \eqref{weak2} for our found pair of functions remains. With regard to \eqref{SchwacheKGZ2} and \eqref{SchwacheKGZ3} from Lemma \ref{KGZ1terTeil}, only 
$$ \int_0^\infty \io \Gamma_{\eps}(\Theta_\eps) v_{\eps z}^2 \vp \to \int_0^\infty \io \Gamma(\Theta) u_{zt}^2\vp \qquad\mbox{ as }\eps=\eps_j\searrow0$$
for all $\vp\in C^\infty_0(\bom\times[0,\infty))$ remained to be shown, which we, however, accomplished in Lemma \ref{lem_sqrtL2}.  The properties $u(\cdot,0)=u_0$ and $v(\cdot,0)=u_1$ as well as the inequality $\Theta\ge 0$ have been asserted by Lemma \ref{KGZ1terTeil} already.
\qed

\section{Operators of the corresponding parameter identification problem}\label{sec:OpAna}
\noindent PDE-based ill-posed parameter identification problems frequently consist of a model and additional observations or measurements, where the model depends on the parameter and also the state. Hence, in our setting the inverse problem aims at identifying the infinite dimensional parameter \[f(z,t)=(p_1,p_2(z,t),p_3(z,t))^T\] 
from given observations, where $p_1,p_2(z,t),p_3(z,t)$ correspond to \eqref{param_p}. 
 Throughout this section, let Assumptions \ref{vorrfunc} and property \eqref{ab01} hold.  Since the model operator will be defined on the whole state space and $\tau$ is prescribed on $[0,\infty)$, its bounded continuous extension to $\mathbb R$ (again denoted by $\tau$) will be used and assumed to satisfy
\[
0<c_\tau\le \tau(\xi)\le C_\tau
\qquad\text{for all }\xi\in\mathbb R,
\]
which does not affect the system \eqref{GLIu1} on the physically relevant set of states and is equivalent to the assumption on $\Gamma$ in Theorem \ref{EX}.

\noindent The following embeddings are used, where for the sake of readability, the constants $c_i$, $i\in \mathbb N$, arising in this context are kept fixed throughout this section. 
First, since the time interval \((0,T)\) has finite measure, there exists a continuous embedding
\[
L^1\bigl((0,T);(W^{1,\infty}(\Omega))^*\bigr)
\hookrightarrow
\bigl(L^\infty((0,T);W^{1,\infty}(\Omega))\bigr)^*,
\]
and thus, there exists \(c_1>0\) such that
\begin{equation}\label{c1Embed}
\|\cdot\|_{(L^{\infty}((0,T); W^{1,\infty}(\Omega)))^*}
\leq c_1 \|\cdot\|_{L^1((0,T); (W^{1,\infty}(\Omega))^*)}.    
\end{equation}
As $\Omega$ is bounded, using the continuous embedding
$L^1(\Omega) \hookrightarrow (W^{1,\infty}(\Omega))^*$
yields that there exists $ c_2>0$ such that
\begin{equation}\label{c2Embed}
\norm{\cdot}_{(W^{1,\infty}(\Omega))^*} \le c_2 \norm{\cdot}_{L^1(\Omega)}.
\end{equation}
Furthermore, 
the continuous embedding
$L^2\bigl((0,T);(H^{1}(\Omega))^*\bigr)
\hookrightarrow
\bigl(H^1((0,T);H^{1}(\Omega))\bigr)^*$
implies that there exists \(c_3>0\) such that
\begin{equation}\label{c3Embed}
\|\cdot\|_{(H^1((0,T);H^{1}(\Omega)))^*}
\leq c_3 \|\cdot\|_{L^2((0,T);(H^{1}(\Omega))^*)}.    
\end{equation}
Motivated by Theorem~\ref{EX}, define
\[
\mathbb W:=
H^1\bigl((0,T);H^1(\Omega,\mathbb R)\bigr)
\times
H^1\bigl((0,T);H_0^1(\Omega,\mathbb R)\bigr)
\times
\Bigl(
L^1\bigl((0,T);W^{1,1}(\Omega,\mathbb R)\bigr)
\cap
L^2(\Omega\times (0,T))
\Bigr).
\]
Consequently, a solution $(u, \phi^0, \Theta)$ to the system \eqref{GLIu1} satisfies $(u, \phi^0, \Theta)\in \mathbb W$. 
\begin{cor}[Regularity]\label{Reg_CTV}
    Let the Assumptions of Theorem \ref{EX} hold, where $(u, \phi^0, \Theta)\in \mathbb W$ is a solution to the system \eqref{GLIu1}. Then,
    \[u_{tt} \in (L^2\bigl((0,T);H^1(\Omega,\mathbb R)\bigr))^*~\text{ and }~ \Theta_t\in L^1((0,T); (W^{1,\infty}(\Omega))^*).  \]
\end{cor}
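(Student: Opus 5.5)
The plan is to read both regularity statements directly off the weak formulations \eqref{weak1full} and \eqref{weakthermofull}. Each time derivative is identified as a distribution through its action on test functions, and that action is bounded using the integrability provided by Theorem \ref{EX} together with Lemma \ref{aequivalenzLK}.

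\emph{Step 1: $u_{tt}$.} From \eqref{weak1full} (with $\rho$ moved inside, i.e.\ treating $(\rho u_t)_t$, then $\rho u_{tt}=(\rho u_t)_t-\rho_t u_t$), the distributional derivative $\rho u_{tt}$ acts on $\varphi\in L^2((0,T);H^1(\Omega))$ by
\[
\langle \rho u_{tt},\varphi\rangle=-\int_0^T\!\!\int_\Omega \bigl(\Gamma(\Theta)u_{zt}+p_1u_z+p_2\phi^0_z+p_2\chi_z-\beta\Theta\bigr)\varphi_z .
\]
No boundary term appears, by the natural (Neumann-type) boundary condition encoded in the weak form. I will bound the bracket in $L^2(\Omega\times(0,T))$ term by term:
\begin{itemize}
\item $\Gamma(\Theta)u_{zt}$ because $\Gamma\le C_\Gamma$ and $u_t\in L^2((0,T);W^{1,2})$;
\item $u_z$ via $L^\infty((0,T);W^{1,2})$;
\item $p_2\phi^0_z=\frac{p_2^2}{p_3}u_z-p_2\chi_z$ via \eqref{aequi1} and the boundedness of $p_2,p_3^{-1}$ on $\bom\times[0,T]$;
\item $\chi_z$ from $\chi\in H^1((0,T);H^m)$;
\item $\Theta\in L^2$ from $q=2<3$ in \eqref{theo3.2.2}.
\end{itemize}
Cauchy--Schwarz then gives $|\langle\rho u_{tt},\varphi\rangle|\le C\|\varphi\|_{L^2((0,T);H^1)}$. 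Dividing by $\rho\ge c>0$ and noting that $\rho_t u_t\in L^2$ is harmless, this yields $u_{tt}\in (L^2((0,T);H^1(\Omega)))^*$. The multiplication by $1/\rho$ only needs $\rho$ to be continuous and positive, so that $\varphi/\rho$ stays in $L^2H^1$; if $\rho$ lacks $z$-regularity, I would instead state the result for $\rho u_{tt}$, or use the smooth approximations $\rho_\eps$.

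\emph{Step 2: $\Theta_t$.} This follows the pattern of Lemma \ref{W1lam}, now applied to the limit. From \eqref{weakthermofull}, for $\varphi$ depending on $z$ only (multiplied by a time cutoff), for a.e.\ $t$,
\[
\bigl|\langle b\Theta_t,\varphi\rangle\bigr|\le \|k\|_\infty\|\Theta_z\|_{L^1}\|\varphi_z\|_{L^\infty}+\bigl(C_\Gamma\|u_{zt}\|_{L^2}^2+\beta\|\Theta u_{zt}\|_{L^1}\bigr)\|\varphi\|_{L^\infty}.
\]
Each coefficient is integrable in time: $\Theta_z\in L^1((0,T);L^1)$ by \eqref{theo3.2.2}, $u_{zt}^2\in L^1$, and $\Theta u_{zt}\in L^1$ by Cauchy--Schwarz with $\Theta,u_{zt}\in L^2$. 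Combined with \eqref{c2Embed}-type embeddings, this gives $b\Theta_t\in L^1((0,T);(W^{1,\infty})^*)$. Then $\Theta_t=b^{-1}(b\Theta_t-b_t\Theta)$, with $b_t\Theta\in L^1(L^1)\hookrightarrow L^1((W^{1,\infty})^*)$.

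\emph{Main obstacle.} The delicate point is dividing by $b$: the product $b^{-1}\varphi$ must stay in $W^{1,\infty}$, which needs $b_z$ bounded. Assumptions \ref{vorrfunc} only give $b\in C^1([0,T];C(\bom))$. I would first try to avoid the division altogether by interpreting $\Theta_t$ through $(b\Theta)_t$. Failing that, I would use the $\eps$-uniform bound of Lemma \ref{W1lam} for $\Theta_{\eps t}$ and pass to the limit by weak-$*$ lower semicontinuity in the dual of $L^\infty((0,T);W^{1,\infty})$, using \eqref{c1Embed}. Lemma \ref{W1lam} gives the bound in $L^1((0,T);(W^{1,\lambda})^*)$, which embeds into $L^1((0,T);(W^{1,\infty})^*)$.
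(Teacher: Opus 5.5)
Your main line is the paper's proof. You test \eqref{weak1full} and \eqref{weakthermofull}, bound each flux and source term by H\"older/Cauchy--Schwarz, and use $L^1(\Omega)\hookrightarrow (W^{1,\infty}(\Omega))^*$. The paper then passes from $\rho u_{tt}$ and $b\Theta_t$ to $u_{tt}$ and $\Theta_t$ merely ``due to Assumption~\ref{vorrfunc}''. You are right that this division is the delicate point, but your proposal does not close it. Testing controls only $\varphi\mapsto\langle u_{tt},\rho\varphi\rangle$ and $\varphi\mapsto\langle \Theta_t, b\varphi\rangle$. Reaching an arbitrary $\psi$ requires $\varphi=\psi/\rho$ (resp.\ $\psi/b$) to be admissible, and since $(\psi/\rho)_z=\psi_z/\rho-\rho_z\psi/\rho^2$, this needs spatial derivatives of the coefficients. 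In one dimension $\sup_t\|\rho_z(\cdot,t)\|_{L^2(\Omega)}<\infty$ would do for $L^2((0,T);H^1(\Omega))$, and $b_z\in L^\infty$ is needed for $W^{1,\infty}(\Omega)$. Your claim that continuity and positivity of $\rho$ suffice is therefore false. Assumptions~\ref{vorrfunc} give only $\rho\in C^2([0,T];C(\overline{\Omega}))$ and $b\in C^1([0,T];C(\overline{\Omega}))$, so no such regularity is available.

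Your $\varepsilon$-fallback for $\Theta_t$ does not repair this, for three reasons.
\begin{enumerate}
\item The proof of Lemma~\ref{W1lam} only bounds $\int_\Omega b_\varepsilon\Theta_{\varepsilon t}\varphi$ and divides by $b_\varepsilon$ in the same way. An $\varepsilon$-uniform bound on $\Theta_{\varepsilon t}$ would therefore need $\sup_\varepsilon\|b_{\varepsilon z}\|_{L^\infty}<\infty$, which \eqref{KGZ_B} and \eqref{ab3} do not give when $b$ is merely continuous in $z$.
\item It concerns only the constructed limit, whereas the corollary is stated for any solution in $\mathbb W$.
\item Weak-$*$ lower semicontinuity in $(L^\infty((0,T);W^{1,\infty}(\Omega)))^*$ via \eqref{c1Embed} only places $\Theta_t$ in that dual. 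Sequences bounded in $L^1$ in time may concentrate into measures in time, so membership in $L^1((0,T);(W^{1,\infty}(\Omega))^*)$ is lost.
\end{enumerate}
Under the stated assumptions, your argument (and the paper's) actually establishes the claim for $\rho u_{tt}$ and $b\Theta_t$. Equivalently, it holds for $(\rho u_t)_t$ and $(b\Theta)_t$ up to the harmless terms $\rho_tu_t$ and $b_t\Theta$. To obtain the statement as written, add a hypothesis such as $\rho,b\in C^0([0,T];C^1(\overline{\Omega}))$, after which the division is legitimate.

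Two smaller points. Everything you take from \eqref{theo3.2.2} and \eqref{aequi1} is already contained in the hypothesis $(u,\phi^0,\Theta)\in\mathbb W$, which is what the paper uses; moreover \eqref{theo3.2.2} is known only for the constructed solution. Also, since $b_t\Theta\varphi+b\Theta\varphi_t=\Theta(b\varphi)_t$, \eqref{weakthermofull} already yields $b\Theta_t$. Subtracting $b_t\Theta$ afterwards therefore double-counts; that correction belongs to $(b\Theta)_t$.
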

\begin{proof}
First, it will be shown that the that the second term in the third equation of the system \eqref{GLIu1}, \(-\partial_z(k\Theta_z)\), is an element of \(L^{1}((0,T); (W^{1,\infty}(\Omega))^*)\). 
Assumption \ref{vorrfunc}, $k\in C^1([0,T];C^1(\overline{\Omega}, \R))$.
In particular $c_k:=\norm{k}_{L^\infty([0,T];L^\infty(\overline{\Omega}, \R))}<\infty$.
By denoting the unit ball in some Banach space by $ B^1(\cdot)$ one has
\begin{align}
 \norm{-\frac{\dd{}}{\dd{z}}(k\Theta_z)}_{L^1((0,T); (W^{1,\infty}(\Omega))^*)} 
&=  \int_0^T \norm{-\frac{\dd{}}{\dd{z}}(k\Theta_z)}_{(W^{1,\infty}(\Omega))^*} \dd{t} \notag\\
&\leq \int_0^T  \sup_{\nu \in \partial B^1( W^{1,\infty}(\Omega))} \int_\Omega |k \Theta_z \nu_z| \dd z \dd t \notag\\
&\leq c_k \int_0^T \norm{\Theta_z}_{L^{1}(\Omega)} \dd t \leq c_k \int_0^T \norm{\Theta}_{W^{1,1}(\Omega)} \dd t \notag\\
&=  c_k \norm{\Theta}_{L^1((0,T); W^{1,1}(\Omega))} < \infty,\label{lapthet}
\end{align}
where  $ \| \nu_z\|_{L^{\infty}(\Om)} \le \| \nu\|_{W^{1,\infty}(\Om)}=1$ for $\nu \in \partial B^1( W^{1,\infty}(\Omega, \R))$, Hölder's inequality in space and the Neumann boundary condition for \(\Theta\) were used.
Second, since \(u \in H^1((0,T); H_0^1(\Omega))\), one obtains \(u_{zt} \in L^2((0,T); L^2(\Omega))\).
Hence, for the third term in the third equation of the system \eqref{GLIu1}, $\tau(\Theta) p_1 u_{zt}^2$,  using Assumption \ref{vorrfunc}, Hölder's inequality and that $u_{zt}$ is real-valued results in
\begin{align}
\norm{\tau(\Theta) p_1 u_{zt}^2}_{L^{1}((0,T); (W^{1,\infty}(\Omega))^*)}
&\leq p_1 C_\tau c_2 \norm{u_{zt}^2}_{L^1((0,T); L^1(\Omega))}= p_1 C_\tau c_2 \norm{u_{zt}}_{L^2((0,T); L^2(\Omega))}^2\notag\\ 
&\leq p_1 C_\tau c_2   \norm{u_t}_{L^2((0,T); H^1(\Omega))}^2 < \infty,\label{uxtsquare}
\end{align}
and similarly,
\begin{align}
\norm{\beta \Theta u_{zt}}_{L^1((0,T); (W^{1,\infty}(\Omega))^*))}&\leq \beta  c_2 \norm{\Theta u_{zt}}_{L^1((0,T); L^1(\Omega))}
\leq  \beta  c_2 \norm{\Theta}_{L^2((0,T); L^2(\Omega))}\norm{u_{zt}}_{L^2((0,T); L^2(\Omega))}\notag\\ 
&\leq   \beta  c_2 \norm{\Theta}_{L^2((0,T); L^2(\Omega))} \norm{u_t}_{L^2((0,T); H^1(\Omega))} < \infty. \label{betathetauzt}
\end{align}
Third, using the solution $(u, \phi^0, \Theta)\in \mathbb W$ to the system \eqref{GLIu1} and testing the system \eqref{GLIu1} with the test function $(0,0, \nu)$, where $\nu \in L^1((0,T);(W^{1,\infty}(\Omega)))$ is arbitrary, yields
\begin{align*}
    \abs{\int_0^T\int_\Omega c_{\mathrm{th}}\rho \Theta_t \nu } ~\dd{z} \dd{t}\leq \int_0^T\int_\Omega \abs{k\Theta_z\nu_z - \tau(\Theta)p_1(u_{zt})^2\nu + \beta\Theta u_{zt}\nu} \dd{z} \dd{t}.
\end{align*}
By taking the supremum over $\partial B^1( W^{1,\infty}(\Omega))$, using the Assumption \ref{vorrfunc} and combining the inequalities \eqref{lapthet}, \eqref{uxtsquare},\eqref{betathetauzt} there exists $c_4>0$ such that 
\begin{align*}
    &\norm{ \Theta_t}_{L^1((0,T); (W^{1,\infty}(\Omega))^*))} \\&\qquad\leq c_4\left(\norm{\Theta}_{L^1((0,T); W^{1,1}(\Omega))}+\norm{u_t}_{L^2((0,T); H^1(\Omega))}^2+ \norm{\Theta}_{L^2((0,T); L^2(\Omega))} \norm{u_t}_{L^2((0,T); H^1(\Omega))}\right).
\end{align*}
Lastly, using the solution $(u, \phi^0, \Theta)\in \mathbb W$ to the system \eqref{GLIu1} and testing the system \eqref{GLIu1} with the test function $(\mu, 0, 0)$, where $\mu \in H^1\bigl((0,T);H^1(\Omega,\mathbb R)\bigr)$ is arbitrary, yields
\begin{align}
\int_0^T\int_{\Omega}&
\rho u_{tt}\mu +\left(p_1u_z +\tau(\Theta) p_1u_{zt}
+ p_2  \phi_z^0+p_2\chi_z
-\beta\Theta\right)\mu_z
\dd{\Omega}\dd{t}=0.
\end{align}
Consequently, 
\begin{align}
\abs{\int_0^T\int_{\Omega}
\rho u_{tt}\mu \dd{\Omega}\dd{t}}
&\leq \int_0^T\int_{\Omega}\abs{\left( p_1u_z +\tau(\Theta) p_1u_{zt}
+ p_2  \phi_z^0+p_2\chi_z
-\beta\Theta\right)\mu_z} \dd{\Omega}\dd{t}\\
&\leq p_1\left(\norm{u_z}_{L^2(\Omega\times (0,T))}+C_\tau\norm{u_{zt}}_{L^2(\Omega\times (0,T))}
\right)\norm{\mu_z}_{L^2(\Omega\times (0,T))}\\
&~~+\norm{p_2}_{L^\infty(\Omega)}\left(\norm{\phi_z^0}_{L^2(\Omega\times (0,T))}+C_\tau\norm{\chi_z}_{L^2(\Omega\times (0,T))}
\right)\norm{\mu_z}_{L^2(\Omega\times (0,T))}\\
&~~+\beta\norm{\Theta}_{L^2(\Omega\times (0,T))}\norm{\mu_z}_{L^2(\Omega\times (0,T))}<\infty.
\end{align}
Due to Assumption \ref{vorrfunc} there exists a constant $c_a>0$ such that 
\begin{align}
\norm{u_{tt}}_{\bigl(L^2\bigl((0,T);H^1(\Omega,\mathbb R)\bigr)\bigr)^*}
&\leq c_a \Big( \norm{u}_{ H^1\bigl((0,T);H^1(\Omega,\mathbb R)\bigr)} + \norm{\phi^0}_{H^1\bigl((0,T);H_0^1(\Omega,\mathbb R)\bigr)} 
\\&~~~~~+ \norm{\Theta}_{L^2(\Omega\times (0,T))} +\norm{\chi}_{L^2\bigl((0,T);H^1(\Omega,\mathbb R)\bigr)}\Big) <\infty.
\end{align}
\end{proof}
\noindent Consequently, the state space is defined as 
\begin{align*}
W := \Biggl\{ (u, \phi^0, \Theta) \in \mathbb{W} :~&  u_{tt} \in \bigl(L^2((0,T); H^1(\Omega, \R))\bigr)^*, ~\Theta_t \in  L^1((0,T); (W^{1,\infty}(\Omega, \R))^*) \Biggr\}.
\end{align*}
Furthermore, define
\[
\widehat W
:=
H^1((0,T);H^1(\Omega,\mathbb R))
\times
H^1((0,T);H_0^1(\Omega,\mathbb R))
\times
L^{\infty}((0,T); W^{1,\infty}(\Omega, \R)) \subset W
\]

\noindent and its dual space $W^*\subset \widetilde W:=\widehat W^*$. Note, that the all-at-once formulation below is based on the system \eqref{GLIu1}, where the initial data remain fixed. 
\begin{definition}[Model operator]\label{Aop}
Let $f=(p_1,p_2,p_3)\in X$ and $s=(u,\phi^0,\Theta)\in W$. The model operator $\mathcal A:X\times W\to \widetilde W$ is defined 
for every $(\mu,w,\nu)\in \widehat W$
by
\[
\langle \mathcal A(f,s), (\mu,w,\nu)\rangle:=
\langle \mathcal A_1(f,s),\mu\rangle+\langle \mathcal A_2(f,s),w\rangle+\langle \mathcal A_3(f,s),\nu\rangle,
\]
where
\begin{align}
\langle \mathcal A_1(f,s),\mu\rangle
&:=
\int_0^T\int_\Omega \rho\,u_{tt}\,\mu+
\Bigl(
p_1u_z+\tau(\Theta)p_1u_{zt}+p_2\phi_z^0+p_2\chi_z-\beta\Theta
\Bigr)\mu_z\,\dd{z}\dd{t},
\label{A1def_u_tt}
\\
\langle \mathcal A_2(f,s),w\rangle
&:=
\int_0^T\int_\Omega
\Bigl(
p_2u_z-p_3\phi_z^0-p_3\chi_z
\Bigr)w_z\,\dd{z}\dd{t},
\label{A2def_u_tt}
\\
\langle \mathcal A_3(f,s),\nu\rangle
&:=
\int_0^T\int_\Omega c_{\mathrm{th}}\rho\,\Theta_t+ k\Theta_z\nu_z
- \tau(\Theta)p_1(u_{zt})^2\nu+ \beta\Theta u_{zt}\nu\,\dd{z}\dd{t}.
\label{A3def_theta_t}
\end{align}
\end{definition}

\begin{theorem}\label{wd}
Let the assumptions of Theorem \ref{EX} hold.
Then, the model operator $\mathcal{A}$
of Definition~\ref{Aop} is well-defined and bounded on bounded subsets of $X\times W$. Moreover, for every $p\in X$ there exists $z\in W$ such that
\begin{equation}
    \mathcal A(f,s)=0
\qquad\text{in }\widetilde W.\label{modeeq}
\end{equation}
\end{theorem}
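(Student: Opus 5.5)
Well-definedness and boundedness reduce to estimating each of the three pairings in Definition~\ref{Aop} separately. For every $(f,s)\in X\times W$ and $(\mu,w,\nu)\in\widehat W$ we bound $|\langle\mathcal A_i(f,s),\cdot\rangle|$ by a constant times the $\widehat W$-norm of the test function. That constant must depend only on $\|f\|_X$, $\|s\|_W$ and the fixed data $\rho,c_{\mathrm{th}},k,\beta,\tau,\chi$, and it should stay bounded when $(f,s)$ ranges over a bounded set. The estimates are those already used in the proof of Corollary~\ref{Reg_CTV}; here the dependence on $f$ is tracked explicitly, e.g.\ via $\|p_i\|_{L^\infty}\le c\|f\|_X$ for $X$ embedding into bounded functions, as Assumptions~\ref{vorrfunc} suggest.

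For $\mathcal A_1$, the term $\int\rho u_{tt}\mu$ is controlled by $\|\rho\|_{C^0}\|u_{tt}\|_{(L^2(H^1))^*}\|\mu\|_{L^2(H^1)}$, which is finite by the definition of $W$. The flux terms are bounded by Cauchy--Schwarz: $p_1\|u_z\|_{L^2}$, $p_1C_\tau\|u_{zt}\|_{L^2}$, $\|p_2\|_{L^\infty}(\|\phi^0_z\|_{L^2}+\|\chi_z\|_{L^2})$ and $\beta\|\Theta\|_{L^2}$, each times $\|\mu_z\|_{L^2}$. Here $\tau(\Theta)$ is measurable and bounded by $C_\tau$ because of its bounded continuous extension to $\mathbb R$. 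For $\mathcal A_2$, Cauchy--Schwarz gives a bound of the form $(\|p_2\|_\infty\|u_z\|_{L^2}+\|p_3\|_\infty(\|\phi^0_z\|_{L^2}+\|\chi_z\|_{L^2}))\|w_z\|_{L^2}$.

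For $\mathcal A_3$, all test functions satisfy $\nu\in L^\infty((0,T);W^{1,\infty})$. The pairing $\int c_{\mathrm{th}}\rho\Theta_t\nu$ is read as a duality between $L^1((0,T);(W^{1,\infty})^*)$ and $L^\infty((0,T);W^{1,\infty})$; this uses \eqref{c1Embed}, with multiplication by $c_{\mathrm{th}}\rho$, assumed $W^{1,\infty}$-regular in space, preserving the test space. The remaining terms are handled exactly as in \eqref{lapthet}, \eqref{uxtsquare} and \eqref{betathetauzt}, with $\|\nu\|_{L^\infty(W^{1,\infty})}$ in place of the unit ball. Summing the three bounds gives a polynomial, and in particular quadratic, expression in $\|s\|_W$ with coefficients depending on $\|f\|_X$. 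This yields both well-definedness as an element of $\widetilde W=\widehat W^*$ and boundedness on bounded sets. Linearity in $(\mu,w,\nu)$ is clear.

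For the solvability claim, fix $f$ satisfying Assumptions~\ref{vorrfunc}. Theorem~\ref{EX}, applied with $p=p_1+p_2^2/p_3$, yields a weak solution $(u,\Theta)$ of \eqref{system1}. Lemma~\ref{aequivalenzLK} then produces $\phi^0$ through \eqref{aequi1}, so that $(u,\phi^0,\Theta)$ solves \eqref{GLIu1} in the sense of Definition~\ref{DefWeakFull}. By Corollary~\ref{Reg_CTV} this triple lies in $W$.

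This is the main obstacle: the regularity in Theorem~\ref{EX} does not obviously give $u\in H^1((0,T);H^1)$ or $\Theta\in L^2$. We get $u_t\in L^2(H^1)$ and $\Theta\in L^q$ for $q<3$, hence in $L^2$. We also get $u\in L^\infty(W^{1,2})$, hence $u\in L^2(H^1)$. Finally $\phi^0_z$ is expressed via $u_z,\chi_z$, and $p_2/p_3\in C^1$, which gives $\phi^0\in H^1(H^1_0)$ once the Dirichlet lift is normalized so that $\phi^0$ vanishes at both ends; this normalization has to be checked using $u$'s boundary behaviour. With membership in $W$ established, the remaining step is to pass from the weak formulation, which uses smooth test functions, to vanishing of $\mathcal A(f,s)$ on all of $\widehat W$. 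To do this, integrate by parts in time in \eqref{weak1full} and \eqref{weakthermofull} to rewrite them with $u_{tt}$ and $\Theta_t$ as functionals; the initial-data terms drop out because the initial conditions are fixed. Then extend to $\widehat W$ by density of $C_0^\infty(\bom\times[0,T))$, using the continuity estimates from the first part. The density step in $L^\infty(W^{1,\infty})$ is delicate, since smooth functions are only weak-$*$ dense there. I would handle it by weak-$*$ approximation, together with the fact that the integrands multiplying $\nu$ and $\nu_z$ are in $L^1$.
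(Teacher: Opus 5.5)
Your plan follows the paper's route. Well-definedness and boundedness come from componentwise H\"older/duality bounds that reuse the estimates of Corollary~\ref{Reg_CTV}. The zero of $\mathcal A$ comes from Theorem~\ref{EX}, where the paper simply asserts that the weak solution lies in $W$ and that \eqref{modeeq} then holds by Definition~\ref{Aop}. Your checks for $u$ and $\Theta$ are fine, and $\phi^0\in H^1((0,T);H^1(\Omega))$ does follow from \eqref{aequi1}. You are right that membership in $W$ is the crux.

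However, the step you leave open, the zero trace in $\phi^0\in H^1((0,T);H_0^1(\Omega))$, cannot be closed by normalizing the lift. This is a genuine gap, and the paper shares it. The traces of $\chi$ are prescribed, $\chi(0,t)=0$ and $\chi(h,t)=\phi^e(t)$, and \eqref{aequi1} fixes $\phi^0_z$ completely. Hence
\[
\phi^0(h,t)-\phi^0(0,t)=\int_\Omega \frac{p_2}{p_3}\,u_z\,dz-\phi^e(t),
\]
and there is no freedom left. Since $u$ solves \eqref{system1}, in which $\phi^e$ does not occur at all, the right-hand side generically does not vanish. Concretely, for $u_0=u_{0t}=\Theta_0=0$ one may take zero approximate data in \eqref{approx}. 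The classical solution is then identically zero, so the constructed solution is $(u,\Theta)\equiv(0,0)$. Then $\phi^0=-\chi+c(t)$, and $\phi^0(\cdot,t)\in H_0^1(\Omega)$ would require $c\equiv 0$ and $c\equiv\phi^e$ simultaneously. This is impossible because $\|\phi^e\|_{L^2(0,T)}>0$.

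The root cause is Lemma~\ref{aequivalenzLK}. It tests the electrostatic equation with functions that need not vanish on $\partial\Omega$, which forces the electric displacement $p_2u_z-p_3\phi_z$ to vanish identically (an open-circuit condition). That is incompatible with prescribing $\phi$ at both electrodes. The operator equation $\mathcal A_2(f,s)=0$ is tested only with $w\in H^1((0,T);H_0^1(\Omega))$, and it instead gives $p_2u_z-p_3\phi_z=D(t)$ with $D(t)=\big(\int_\Omega \frac{p_2}{p_3}u_z\,dz-\phi^e(t)\big)\big/\int_\Omega\frac{1}{p_3}\,dz$. The flux in $\mathcal A_1$ then becomes $pu_z+\Gamma(\Theta)u_{zt}-\beta\Theta-\frac{p_2}{p_3}D(t)$ with $p=p_1+p_2^2/p_3$. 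This nonlocal, $\phi^e$-driven term is not covered by Theorem~\ref{EX}. To complete the argument you need either an existence result for this modified system, or a state space in which $\phi^0$ need not vanish at $z=h$; the latter no longer encodes the excitation.

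Three smaller points:
\begin{itemize}
\item The bound $\big|\int_0^T\!\int_\Omega\rho u_{tt}\mu\big|\le\|\rho\|_{C^0}\|u_{tt}\|_{(L^2((0,T);H^1(\Omega)))^*}\|\mu\|_{L^2((0,T);H^1(\Omega))}$ needs $\rho\mu\in L^2((0,T);H^1(\Omega))$, i.e.\ $\rho_z\in L^\infty$. Assumptions~\ref{vorrfunc} do not provide this. You noticed the analogous requirement for $c_{\mathrm{th}}\rho$ in $\mathcal A_3$ but not here.
\item Your weak-$*$ density argument covers the $L^1$-represented terms of $\mathcal A_3$ but not $\langle c_{\mathrm{th}}\rho\,\Theta_t,\nu\rangle$. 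An element of $(W^{1,\infty}(\Omega))^*$ is not determined by its action on smooth functions, so the representative of $\Theta_t$ must be fixed through the equation itself.
\item For $\mathcal A_1$ and $\mathcal A_2$ no initial data are needed at all. Both functionals are continuous in the $L^2((0,T);H^1(\Omega))$-norm of the test function, and functions compactly supported in $(0,T)$ are dense there.
\end{itemize}
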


\begin{proof}
Let $p\in X$ be fixed. By Theorem~\ref{EX}, there exists a weak solution $z:=(u,\phi^0,\Theta)\in W$ of the lifted system \eqref{GLIu1}. Thus,  by Definition~\ref{Aop} it holds that equation \eqref{modeeq} is satisfied.
Let $(\mu,w,\nu)\in \widehat W$. The regularities
\[
u\in H^1((0,T);H^1(\Omega)),\qquad
\phi^0\in H^1((0,T);H_0^1(\Omega)),\qquad
\Theta\in L^2(\Omega\times (0,T)),
\]
together with Assumption~\ref{vorrfunc}, Hölder's inequality,
Corollary~\ref{Reg_CTV} and the definition of \(W\), yield
\begin{align*}
|\langle \mathcal A_1(f,s),\mu\rangle|
&\leq \int_0^T\int_{\Omega}
\abs{\rho u_{tt}\mu} \dd{\Omega}\dd{t}+
p_1\left(\norm{u_z}_{L^2(\Omega\times (0,T))}+C_\tau\norm{u_{zt}}_{L^2(\Omega\times (0,T))}
\right)\norm{\mu_z}_{L^2(\Omega\times (0,T))}\\
&~~+\norm{p_2}_{L^\infty(\Omega)}\left(\norm{\phi_z^0}_{L^2(\Omega\times (0,T))}+C_\tau\norm{\chi_z}_{L^2(\Omega\times (0,T))}
\right)\norm{\mu_z}_{L^2(\Omega\times (0,T))}\\
&~~+\beta\norm{\Theta}_{L^2(\Omega\times (0,T))}\norm{\mu_z}_{L^2(\Omega\times (0,T))}.
\end{align*}
Using the definition of $W$, $c_3$ as in inequality \eqref{c3Embed} and $c_a$ from Corollary~\ref{Reg_CTV} one obtains
\begin{align}
\norm{\mathcal A_1(f,s)}_{(H^1((0,T);H^{1}(\Omega)))^*}
&\leq c_3\norm{u_{tt}}_{\bigl(L^2\bigl((0,T);H^1(\Omega,\mathbb R)\bigr)\bigr)^*}+c_3 c_a \Big( \norm{u}_{ H^1\bigl((0,T);H^1(\Omega,\mathbb R)\bigr)} 
\\&~~~+ \norm{\phi^0}_{H^1\bigl((0,T);H_0^1(\Omega,\mathbb R)\bigr)} + \norm{\Theta}_{L^2(\Omega\times (0,T))} +\norm{\chi}_{L^2\bigl((0,T);H^1(\Omega,\mathbb R)\bigr)}\Big) <\infty.
\end{align}
For the second component, again by Hölder's inequality, it holds that
\begin{align*}
|\langle \mathcal A_2(f,s),w\rangle|
&\le
\|p_2\|_{L^\infty(\Omega\times (0,T))}
\|u\|_{L^2((0,T);H^1(\Omega))}
\|w\|_{L^2((0,T);H_0^1(\Omega))}
\\
&\quad
+\|p_3\|_{L^\infty(\Omega\times (0,T))}
\Bigl(
\|\phi^0\|_{L^2((0,T);H_0^1(\Omega))}
+\|\chi\|_{L^2((0,T);H^1(\Omega))}
\Bigr)
\|w\|_{L^2((0,T);H_0^1(\Omega))}.
\end{align*}
Thus
\[
\mathcal A_2(f,s)\in (L^2((0,T);H_0^1(\Omega)))^*.
\]
Lastly, by using $c_1$ as in inequality \eqref{c1Embed} and Corollary~\ref{Reg_CTV} as well as the definition of $W$ it holds that
\begin{align*}
\norm{\mathcal A_3(f,s)}_{(L^\infty((0,T);W^{1,\infty}(\Omega)))^*}     \leq& c_1\norm{ \Theta_t}_{L^1((0,T); (W^{1,\infty}(\Omega))^*))}+c_1c_4\left(\norm{\Theta}_{L^1((0,T); W^{1,1}(\Omega))}
\right.\\&\left.~+\norm{u_t}_{L^2((0,T); H^1(\Omega))}^2+ \norm{\Theta}_{L^2((0,T); L^2(\Omega))} \norm{u_t}_{L^2((0,T); H^1(\Omega))}\right).
\end{align*}
Collecting the preceding bounds yields that
$\mathcal A(f,s)\in \widetilde W$
for every $(f,s)\in X\times W$. Moreover, for every bounded subset $B\subset X\times W$ there exists $K_B>0$ such that
\begin{align}
\|\mathcal A(f,s)\|_{\widetilde W}
\le\;& K_B\Bigl(
\|u\|_{H^1((0,T);H^1(\Omega))}
+\|\phi^0\|_{H^1((0,T);H_0^1(\Omega))}
+\|\Theta\|_{L^1((0,T);W^{1,1}(\Omega))}
\notag\\
&\qquad
+\|\Theta\|_{L^2(\Omega\times (0,T))}
+\|u\|_{H^1((0,T);H^1(\Omega))}^2
+ \norm{\Theta}_{L^2((0,T); L^2(\Omega))} \norm{u}_{H^1((0,T); H^1(\Omega))}+1
\Bigr)
\label{A_bound_u_tt}
\end{align}
for all $(f,s)\in B$, where the constant $K_B$ also absorbs the fixed norm of $\chi$. 
\end{proof}

\noindent  \noindent  In order to recover information on the parameter $f$ and state additional observations are needed. 
In piezoelectric components, electrical surface charges, occurring at the electrodes, are experimentally accessible and frequently used to determine material parameters.
Hence, we assume an excitation signal $\phi^e\in H^1(0,T)$ is applied at the boundary $z=h$,
while the specimen is grounded at the boundary $z=0$. 
Reasonably, we assume that $\norm{\phi^e}_{L^2\left( 0, T\right)}>0$, since otherwise thy system would not respond.
Hence, we observe the surface charge at the upper electrode.
Since the surface charge is the boundary evaluation of the electric displacement field, which is of $L^2$-regularity, we follow a similar approach as in \cite{Veronika}. 
Consequently, we define the observation operator 
via the following approximation approach, similar to \cite{Veronika}.
\begin{definition}[Observation operator]\label{obsOp}
Let $Y:=L^1(0,T)$. Then, with $\phi=\phi^0+\chi$, the observation operator $\mathcal C:X\times W\to Y$ is defined by
\begin{equation}\label{obsdef}
\mathcal C(f,s)
:=
\frac{1}{\|\phi^e\|_{L^2(0,T)}}
\int_\Omega (p_2u_z-p_3\phi_z)\phi_z\,\dd z.
\end{equation}
\end{definition}

\begin{lemma}\label{Cwd}
Let the assumptions of Theorem \ref{EX} hold.
Then, the observation operator $\mathcal C$ of Definition~\ref{obsOp} is well-defined and bounded on bounded subsets of $X\times W$.
\end{lemma}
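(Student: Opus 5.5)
The plan is to show, for fixed $(f,s)\in X\times W$ with $s=(u,\phi^0,\Theta)$, that the map $t\mapsto \mathcal C(f,s)(t)$ is measurable and lies in $L^1(0,T)$. We then bound its norm by a polynomial expression in $\|f\|_X$ and $\|s\|_W$; such an expression is uniformly bounded on bounded subsets. First we observe that $\phi=\phi^0+\chi$ satisfies
\[
\phi\in L^2((0,T);H^1(\Omega)),\qquad \|\phi_z\|_{L^2(\Omega\times(0,T))}\le \|\phi^0\|_{H^1((0,T);H_0^1(\Omega))}+\|\chi\|_{L^2((0,T);H^1(\Omega))}.
\]
This uses $\phi^0\in H^1((0,T);H_0^1(\Omega))$ from the definition of $\mathbb W$ and $\chi\in H^1((0,T);H^m(\Omega))$, $m\ge2$, from Assumptions \ref{vorrfunc}. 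Likewise $u_z\in L^2(\Omega\times(0,T))$ because $u\in H^1((0,T);H^1(\Omega))$. The normalisation is harmless, since $\|\phi^e\|_{L^2(0,T)}>0$ was assumed.

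Next, for a.e. $t\in(0,T)$ the integrand $(p_2u_z-p_3\phi_z)\phi_z$ is a product of two $L^2(\Omega)$ functions, so the spatial integral exists. Measurability in $t$ follows from Fubini–Tonelli, because the integrand lies in $L^1(\Omega\times(0,T))$. Integrating over time, Hölder's inequality gives
\[
\|\mathcal C(f,s)\|_{L^1(0,T)}\le \frac{1}{\|\phi^e\|_{L^2(0,T)}}\Bigl(\|p_2\|_{L^\infty}\|u_z\|_{L^2(\Omega\times(0,T))}\|\phi_z\|_{L^2(\Omega\times(0,T))}+\|p_3\|_{L^\infty}\|\phi_z\|_{L^2(\Omega\times(0,T))}^2\Bigr).
\]
Here the sup-norms of $p_2,p_3$ are finite by Assumptions \ref{vorrfunc} ($p_i\in C^1(\bom\times[0,T])$), and they are controlled by $\|f\|_X$ through the embedding of $X$ into the bounded functions. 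Inserting the bound for $\phi_z$ from the first step gives an estimate of the form $K_B(1+\|s\|_W^2)$ for all $(f,s)$ in a bounded set $B$. This is the same structure as \eqref{A_bound_u_tt}, with the fixed norm of $\chi$ absorbed into the constant.

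The main obstacle is that $X$ has not been specified explicitly so far. The argument needs the parameter norm to dominate $\|p_2\|_{L^\infty(\Omega\times(0,T))}$ and $\|p_3\|_{L^\infty(\Omega\times(0,T))}$. I would make this an explicit hypothesis, namely that $X$ embeds continuously into $\R\times L^\infty(\Omega\times(0,T))^2$, consistent with Assumptions \ref{vorrfunc}. If only $L^2$-type control of the $p_i$ were available, I would instead try to land in $Y=L^1$ by using the stronger time regularity of the state (the $H^1$-in-time bounds, hence $L^\infty((0,T);H^1(\Omega))$) together with $L^2$ bounds on the parameters.
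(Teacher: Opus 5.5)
Your proof is correct and is essentially the paper's. The paper likewise fixes a bounded $B\subset X\times W$ and takes a constant $c_B$ dominating $\|p_2\|_{L^\infty(\Omega\times(0,T))}/\|\phi^e\|_{L^2(0,T)}$ and $\|p_3\|_{L^\infty(\Omega\times(0,T))}/\|\phi^e\|_{L^2(0,T)}$ on $B$; this tacitly uses the same $L^\infty$-control of the parameters by the $X$-norm that you state as an explicit hypothesis. It then applies H\"older's inequality in space--time to bound $\|\mathcal C(f,s)\|_{L^1(0,T)}$ by $c_B\|u\|_{L^2((0,T);H^1(\Omega))}\|\phi\|_{L^2((0,T);H^1(\Omega))}$ plus a multiple of $\|\phi\|_{L^2((0,T);H^1(\Omega))}^2$.

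Your hypothesis is genuinely needed, and your $L^2$-parameter fallback would not work. For a.e.\ $t$, the product $p_2u_z\phi_z$ of three functions merely in $L^2(\Omega)$ need not be integrable: take each to behave like $z^{-2/5}$ near an endpoint, with $p_3\equiv 1$. The $H^1$-in-time regularity of the state gives no extra spatial integrability of $u_z$ or $\phi_z$.
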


\begin{proof}
Let $B\subset X\times W$ be bounded. Then there exists $c_{B}>0$ such that
\[
\frac{1}{\|\phi^e\|_{L^2(0,T)}}
\max\{\|p_2\|_{L^\infty(\Omega\times (0,T))},\|p_3\|_{L^\infty(\Omega\times (0,T))}\}
\le c_{B}
\]
for all $(f,s)\in B$. Hence, Hölder's inequality and $c_2$ as in inequality \eqref{c2Embed} yields
\begin{align*}
\|\mathcal C(f,s)\|_Y
&=
\int_0^T |[\mathcal C(f,s)](t)|\,dt
\\
&\le
\frac{1}{\|\phi^e\|_{L^2(0,T)}}
\left(
\|p_2u_z\phi_z\|_{L^1(\Omega\times (0,T))}
+
\|p_3\phi_z^2\|_{L^1(\Omega\times (0,T))}
\right)
\\
&\le
c_{B}
\|u_z\|_{L^2(\Omega\times (0,T))}
\|\phi_z\|_{L^2(\Omega\times (0,T))}
+
c_{B}c_2\|\phi_z\|_{L^2(\Omega\times (0,T))}^2
\\
&\le
c_{B}
\|u\|_{L^2((0,T);H^1(\Omega))}
\|\phi\|_{L^2((0,T);H^1(\Omega))}
+c_{B}c_2
\|\phi\|_{L^2((0,T);H^1(\Omega))}^2.
\end{align*}
Consequently, $\mathcal C$ is well-defined and bounded on bounded subsets of $X\times W$.
\end{proof}

\begin{remark}
If, in addition,
$u,\phi\in L^2((0,T);H^2(\Omega,\mathbb R)),$
then $u_z(h,\cdot)$ and $\phi_z(h,\cdot)$ are well-defined. Thus, the observation operator can be defined as the charge, i.e.,
\begin{equation}\label{PraxisObsOp}
\mathcal C(f,s)=C(f,s)=p_2(h,t)u_z(h,t)-p_3(h,t)\phi_z(h,t).
\end{equation}
\end{remark}

\noindent Since the reduced approach, i.e., the classical setting, where the model is  eliminated, by introducing the parameter-to-state map, 
needs the uniqueness of the weak solutions to the system \eqref{GLIu1}, and Theorem~\ref{EX} establishes existence of solutions to the system \eqref{GLIu1}, but uniqueness of solutions remains delicate, we will focus on the all-at-once approach. 
 Consequently, we are considering the model together with the observations as one system for $ (f, l) $, i.e.,	
	\begin{align*}
	\mathcal A(f, l)&=0 ~\text{ in } \widetilde{W},\\
	\mathcal  C(f,l)&=y ~ \text{ in } Y,
	\end{align*}
    simultaneously, where two infinite dimensional variables $ f $ and $ l $ have to be determined. 
 This motivates the following definition.

\begin{definition}[Forward operator]\label{FOp}
With the model operator $\mathcal{A}$
of Definition~\ref{Aop} and the observation operator $\mathcal C$ of Definition~\ref{obsOp}, the forward operator $F:X\times W\to \widetilde W\times Y$ is defined by
\[
F(f,s):=
\begin{pmatrix}
\mathcal A(f,s)\\[1mm]
\mathcal C(f,s)
\end{pmatrix}.
\]
\end{definition}
\noindent For given noisy measurements $y^\delta$, the inverse problem is modeled by the operator equation
\begin{equation}\label{noisopeq}
F(f,s)=
\begin{pmatrix}
0\\[1mm]
y^\delta
\end{pmatrix}.
\end{equation}
Let the solution space of the temperature be defined as
\begin{align*}
\mathbb{T}:= \Biggl\{ \Theta \in \left( L^1((0,T); W^{1,1}(\Omega, \R)) \cap L^2(\Omega \times (0,T)) \right): ~\Theta_t \in  L^1((0,T); (W^{1,\infty}(\Omega, \R))^*) \Biggr\}.
\end{align*}
For the differentiability analysis, we employ the assumption
$\tau\in C^1\bigl(\overline{\mathbb T};
L^\infty(\Omega\times (0,T))\bigr),$
together with the boundedness of \(\tau\) and \(\tau'\) on bounded subsets of
\(\overline{\mathbb T}\). In particular, 
$\tau(\Theta)\in L^\infty(\Omega\times(0,T))$
and
$\tau'(\Theta)\kappa\in L^\infty(\Omega\times(0,T))$
for every \(\Theta\in\overline{\mathbb T}\) and every  direction
\(\kappa\in\mathbb T\).
For an arbitrary direction \(\xi=(\eta,\psi,\kappa)\in W\), the G\^ateaux
derivative of \(\mathcal A\) with respect to the state is given by
\begin{align}
\langle \mathcal{A}_s'(f,s)\xi,(\mu, w, \nu)\rangle
&=
\int_0^T\int_\Omega
\bigl(
p_1\eta_z+\tau'(\Theta)\kappa\, p_1u_{zt}
+\tau(\Theta) p_1\eta_{zt}+p_2\psi_z-\beta\kappa
\bigr)\mu_z\,\dd{z}\dd{t}
\nonumber\\
&\hspace*{-10mm}+
\int_0^T\int_\Omega
\rho\,\eta_{tt}\,\mu+\bigl(
p_2\eta_z-p_3\psi_z
\bigr)w_z\,\dd{z}\dd{t}
-2\int_0^T\int_\Omega
\tau(\Theta) p_1u_{zt}\eta_{zt}\nu\,\dd{z}\dd{t}
\nonumber\\
&\hspace*{-10mm}+
\int_0^T\int_\Omega
\bigl(
c_{\mathrm{th}}\rho\,\kappa_t+k\kappa_z\nu_z
+\beta(\kappa u_{zt}+\Theta\eta_{zt})\nu
-\tau'(\Theta)\kappa\, p_1(u_{zt})^2\nu
\bigr)\,\dd{z}\dd{t}.
\label{As3_theta_t}
\end{align}
For an arbitrary direction \(q=(q_1,q_2,q_3)\in X\), the G\^ateaux derivative
of \(\mathcal A\) with respect to \(f\) is
\begin{align}
\langle \mathcal{A}_f'(f,s)q,(\mu, w, \nu)\rangle
&=
\int_0^T\int_\Omega
\bigl(
q_1u_z+\tau(\Theta) q_1u_{zt}+q_2\phi_z^0+q_2\chi_z
\bigr)\mu_z\,\dd{z}\dd{t}
\nonumber\\
&\hspace*{-5mm}+
\int_0^T\int_\Omega
\bigl(
q_2u_z-q_3\phi_z^0-q_3\chi_z
\bigr)w_z\,\dd{z}\dd{t}
-\int_0^T\int_\Omega
\tau(\Theta) q_1(u_{zt})^2\nu\,\dd{z}\dd{t}.
\label{Ap3_u_tt}
\end{align}
Similarly, one obtains the G\^ateaux derivatives for the observation operator
\(\mathcal C\),
\begin{align}
\mathcal{C}_s'(f,s)\xi
&:=
\frac{1}{\|\phi^e\|_{L^2(0,T)}}
\int_\Omega
\Bigl(
(p_2\eta_z-p_3\psi_z)\phi_z
+
(p_2u_z-p_3\phi_z)\psi_z
\Bigr)\,\dd z,
\label{Csdef_corr_u_tt}
\\
\mathcal{C}_f'(f,s)q
&:=
\frac{1}{\|\phi^e\|_{L^2(0,T)}}
\int_\Omega
(q_2u_z-q_3\phi_z)\phi_z\,\dd z.
\label{Cpdef_corr_u_tt}
\end{align}

\begin{theorem}[\frechet differentiability of \(F\)]
\label{FrechetThm}
Let the assumptions of Theorem \ref{EX} hold and assume that 
$\tau\in C^1\bigl(\overline{\mathbb T};
L^\infty(\Omega\times(0,T))\bigr).$
Suppose that both \(\tau\) and \(\tau'\) are bounded on bounded subsets of
\(\overline{\mathbb T}\), i.e., for every bounded subset
\(M\subset\overline{\mathbb T}\), there exist constants
\(C_{\tau,M}>0\) and \(C_{\tau',M}>0\) such that
\[
\|\tau(\Theta)\|_{L^\infty(\Omega\times(0,T))}
\leq C_{\tau,M}
\quad\text{and}\quad
\|\tau'(\Theta)\|_{\mathcal L(\mathbb T,
L^\infty(\Omega\times(0,T)))}
\leq C_{\tau',M}
\qquad\text{for all }\Theta\in M,
\]
where \(\mathcal L(\mathbb T,L^\infty(\Omega\times(0,T)))\) denotes the space
of bounded linear operators from \(\mathbb T\) into
\(L^\infty(\Omega\times(0,T))\). Then, the forward operator \(F\) of
Definition~\ref{FOp} is well-defined and continuously \frechet differentiable, and \(F\) as well as \(F'\) are bounded on bounded subsets of \(X\times W\).
\end{theorem}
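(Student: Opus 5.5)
Well-definedness and boundedness of $F$ on bounded subsets follow directly from Theorem~\ref{wd} and Lemma~\ref{Cwd}, since $F=(\mathcal A,\mathcal C)$ and the norm on $\widetilde W\times Y$ is the sum of the component norms. The work is therefore in differentiability. The plan is to show four things: the partial G\^ateaux derivatives $\mathcal A_s',\mathcal A_f',\mathcal C_s',\mathcal C_f'$ listed before the theorem are bounded linear operators; they are bounded on bounded subsets; the joint map $(f,s)\mapsto F'(f,s)\in\mathcal L(X\times W,\widetilde W\times Y)$ is continuous. By the standard result that a G\^ateaux differentiable map with continuous G\^ateaux derivative is continuously Fr\'echet differentiable, this suffices.

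We first treat the terms that are affine or bilinear in $(f,s)$. These are all pieces of $\mathcal A_1,\mathcal A_2$ except those involving $\tau(\Theta)$, the pieces $c_{\mathrm{th}}\rho\Theta_t$, $k\Theta_z\nu_z$ and $\beta\Theta u_{zt}\nu$ of $\mathcal A_3$, and all of $\mathcal C$. For these, differentiability is immediate, and the remainder is exactly the bilinear form evaluated at the increment, i.e.\ $O(\|q\|_X\|\xi\|_W+\|\xi\|_W^2)$. The estimates are the same H\"older bounds used in Corollary~\ref{Reg_CTV}, Theorem~\ref{wd} and Lemma~\ref{Cwd}, together with the embeddings \eqref{c1Embed}--\eqref{c3Embed}. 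For example,
\[
\Bigl|\int_0^T\!\!\int_\Omega \beta\kappa u_{zt}\nu\Bigr|\le \beta\|\kappa\|_{L^2}\|u_{zt}\|_{L^2}\|\nu\|_{L^\infty(W^{1,\infty})},
\]
and $\mathcal C$ is handled by $\|\eta_z\phi_z\|_{L^1}\le\|\eta_z\|_{L^2}\|\phi_z\|_{L^2}$. Continuity of these derivative parts in $(f,s)$ is then Lipschitz on bounded sets.

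The nonlinear terms are $\tau(\Theta)p_1u_{zt}\mu_z$ in $\mathcal A_1$ and $\tau(\Theta)p_1u_{zt}^2\nu$ in $\mathcal A_3$. For these we write the Taylor remainder using $\tau(\Theta+\kappa)-\tau(\Theta)-\tau'(\Theta)\kappa=\int_0^1(\tau'(\Theta+\lambda\kappa)-\tau'(\Theta))\kappa\,d\lambda$, which is $o(\|\kappa\|_{\mathbb T})$ in $L^\infty(\Omega\times(0,T))$ by the assumption $\tau\in C^1(\overline{\mathbb T};L^\infty)$. We multiply this by $p_1u_{zt}\in L^2$ (against $\mu_z\in L^2$), respectively by $u_{zt}^2\in L^1$ (against $\nu\in L^\infty$). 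The remaining cross terms, such as $(\tau(\Theta+\kappa)-\tau(\Theta))(q_1u_{zt}+p_1\eta_{zt})$ and $\tau(\Theta)p_1\eta_{zt}^2\nu$, are quadratic in the increment; we control them using $\|\tau'\|_{\mathcal L(\mathbb T,L^\infty)}\le C_{\tau',M}$ on a bounded set $M$ and $\|\tau(\Theta)\|_{L^\infty}\le C_{\tau,M}$. Boundedness of $F'$ on bounded sets follows from the same constants.

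The main obstacle we expect is continuity of $(f,s)\mapsto F'(f,s)$ in operator norm for the $\tau$-terms. For instance, we need
\[
\sup_{\|\kappa\|\le1}\|(\tau'(\Theta_n)-\tau'(\Theta))\kappa\, p_1 u_{n,zt}^2\|_{(L^\infty(W^{1,\infty}))^*}\to0 .
\]
We plan to split this as $(\tau'(\Theta_n)-\tau'(\Theta))\kappa\,u_{n,zt}^2+\tau'(\Theta)\kappa\,(u_{n,zt}^2-u_{zt}^2)$. The first part is handled by continuity of $\tau'$ into $\mathcal L(\mathbb T,L^\infty)$ together with the uniform $L^2$-bound on $u_{n,zt}$. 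The second is handled by $\|u_{n,zt}^2-u_{zt}^2\|_{L^1}\le\|u_{n,zt}-u_{zt}\|_{L^2}\|u_{n,zt}+u_{zt}\|_{L^2}$. The key point is that the chosen test space $\widehat W$ places $\nu$ in $L^\infty$, which makes the $L^1$ products of $u_{zt}$ admissible; this is exactly the reason the weak regularity of $\Theta$ from Theorem~\ref{EX} suffices.
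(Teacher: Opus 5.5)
Your proposal is correct and follows essentially the same route as the paper: well-definedness and boundedness come from Theorem~\ref{wd} and Lemma~\ref{Cwd}, and differentiability comes from H\"older estimates on the polynomial part of the remainder together with the $L^\infty$-valued Taylor remainder $r_\tau(\Theta,\kappa)=o(\|\kappa\|_{\mathbb T})$, paired against $u_{zt}\in L^2$ and $u_{zt}^2\in L^1$ with $\nu\in L^\infty$. The differences are cosmetic: you use joint increments $(q,\xi)$ and the G\^ateaux-plus-continuous-derivative criterion where the paper uses separate partial remainders and the fact that $\mathcal A,\mathcal C$ are affine in $f$, and you spell out the operator-norm continuity of $F'$ that the paper only asserts; note also that $\mathcal C$ is trilinear rather than bilinear in $(f,s)$ (e.g.\ $p_2u_z\phi_z^0$), which adds a harmless $O(\|q\|_X\|\xi\|_W^2)$ term to your remainder.
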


\begin{proof}
By Theorem~\ref{wd} and Lemma~\ref{Cwd}, the forward operator \(F\) of
Definition~\ref{FOp} is well-defined and bounded on bounded subsets of \(X\times W\).
Let
$\xi=(\eta,\psi,\kappa)\in W$
and let \((\mu,w,\nu)\in\widehat W\) be arbitrary with
$\|(\mu,w,\nu)\|_{\widehat W}\leq 1.$
Then,
\begin{align*}
&\langle
\mathcal A(f,s+\xi)-\mathcal A(f,s)-\mathcal{A}_s'(f,s)\xi,
(\mu,w,\nu)
\rangle
\\
&=
\int_0^T\int_\Omega
p_1\Bigl(
\tau(\Theta+\kappa)(u_{zt}+\eta_{zt})-\tau(\Theta)u_{zt}
-\tau'(\Theta)\kappa\,u_{zt}
-\tau(\Theta)\eta_{zt}
\Bigr)\mu_z\,\dd{z}\dd{t}
\\
&\quad+
\int_0^T\int_\Omega
\beta\Bigl(
(\Theta+\kappa)(u_{zt}+\eta_{zt})
-\Theta u_{zt}
-\kappa u_{zt}
-\Theta\eta_{zt}
\Bigr)\nu\,\dd{z}\dd{t}
\\
&\quad-
\int_0^T\int_\Omega
p_1\Bigl(
\tau(\Theta+\kappa)(u_{zt}+\eta_{zt})^2
-\tau(\Theta)(u_{zt})^2
-\tau'(\Theta)\kappa\,(u_{zt})^2
-2\tau(\Theta)u_{zt}\eta_{zt}
\Bigr)\nu\,\dd{z}\dd{t}.
\end{align*}
Define
\[
r_\tau(\Theta,\kappa):=
\tau(\Theta+\kappa)-\tau(\Theta)-\tau'(\Theta)\kappa.
\]
\newpage
\noindent For the first component of the remainder, one can compute
\begin{align*}
&\tau(\Theta+\kappa)(u_{zt}+\eta_{zt})
-\tau(\Theta)u_{zt}
-\tau'(\Theta)\kappa\,u_{zt}
-\tau(\Theta)\eta_{zt}
\\
&=
\bigl(\tau(\Theta)+\tau'(\Theta)\kappa+r_\tau(\Theta,\kappa)\bigr)
(u_{zt}+\eta_{zt})
-\tau(\Theta)u_{zt}
-\tau'(\Theta)\kappa\,u_{zt}
-\tau(\Theta)\eta_{zt}
\\
&=
\tau'(\Theta)\kappa\,\eta_{zt}
+r_\tau(\Theta,\kappa)(u_{zt}+\eta_{zt}).
\end{align*}
For the second term, we have
\[
(\Theta+\kappa)(u_{zt}+\eta_{zt})
-\Theta u_{zt}
-\kappa u_{zt}
-\Theta\eta_{zt}
=
\kappa\eta_{zt}.
\]
Finally, for the third term, we obtain
\begin{align*}
&-\tau(\Theta+\kappa)p_1(u_{zt}+\eta_{zt})^2
+\tau(\Theta)p_1(u_{zt})^2
+\tau'(\Theta)\kappa\,p_1(u_{zt})^2
+2\tau(\Theta)p_1u_{zt}\eta_{zt}
\\
&=
-\bigl(\tau(\Theta)+\tau'(\Theta)\kappa+r_\tau(\Theta,\kappa)\bigr)
p_1\bigl((u_{zt})^2+2u_{zt}\eta_{zt}+(\eta_{zt})^2\bigr)
+\tau(\Theta)p_1(u_{zt})^2
\\
&\quad
+\tau'(\Theta)\kappa\,p_1(u_{zt})^2
+2\tau(\Theta)p_1u_{zt}\eta_{zt}
\\
&=
-\tau(\Theta)p_1(\eta_{zt})^2
-2\tau'(\Theta)\kappa\,p_1u_{zt}\eta_{zt}
-\tau'(\Theta)\kappa\,p_1(\eta_{zt})^2
-r_\tau(\Theta,\kappa)p_1(u_{zt}+\eta_{zt})^2.
\end{align*}
Hence, the remainder
\[
R(\xi):=\mathcal A(f,s+\xi)-\mathcal A(f,s)-\mathcal{A}_s'(f,s)\xi,
\]
is given by
\begin{align*}
\langle R(\xi),(\mu,w,\nu)\rangle
&=
\int_0^T\int_\Omega
p_1\Bigl(
\tau'(\Theta)\kappa\,\eta_{zt}
+r_\tau(\Theta,\kappa)(u_{zt}+\eta_{zt})
\Bigr)\mu_z\,\dd{z}\dd{t}
\\
&\quad+
\int_0^T\int_\Omega
\beta\,\kappa\eta_{zt}\,\nu\,\dd{z}\dd{t}
-\int_0^T\int_\Omega
\tau(\Theta)p_1(\eta_{zt})^2\nu\,\dd{z}\dd{t}
\\
&\quad-
2\int_0^T\int_\Omega
\tau'(\Theta)\kappa\,p_1u_{zt}\eta_{zt}\,\nu\,\dd{z}\dd{t}
-\int_0^T\int_\Omega
\tau'(\Theta)\kappa\,p_1(\eta_{zt})^2\,\nu\,\dd{z}\dd{t}
\\
&\quad-
\int_0^T\int_\Omega
r_\tau(\Theta,\kappa)p_1(u_{zt}+\eta_{zt})^2\nu\,\dd{z}\dd{t}.
\end{align*}
Since \((\mu,w,\nu)\in \widehat W\) with
\(\|(\mu,w,\nu)\|_{\widehat W}\leq1\) was arbitrary, H\"older's inequality
and analogous arguments as in the estimates \eqref{uxtsquare} and
\eqref{betathetauzt} imply, for \(\|\xi\|_W\neq0\), that there exists a
constant \(C_R>0\) such that
\begin{align*}
\frac{\|R(\xi)\|_{\widetilde W}}{\|\xi\|_W}
&\leq
C_R\Biggl(
\frac{\|\tau'(\Theta)\kappa\|_{L^\infty(\Omega\times(0,T))}}
{\|\xi\|_W}
\|\eta\|_{H^1(\Omega\times(0,T))}
+
\frac{\|\kappa\|_{L^2(\Omega\times(0,T))}}{\|\xi\|_W}
\|\eta\|_{H^1(\Omega\times(0,T))}
\\
&\quad+
\|\tau(\Theta)\|_{L^\infty(\Omega\times(0,T))}
\frac{\|\eta\|_{H^1(\Omega\times(0,T))}^2}{\|\xi\|_W}
\\
&\quad+
\frac{\|\tau'(\Theta)\kappa\|_{L^\infty(\Omega\times(0,T))}}
{\|\xi\|_W}
\Bigl(
\|u\|_{H^1(\Omega\times(0,T))}
\|\eta\|_{H^1(\Omega\times(0,T))}
+
\|\eta\|_{H^1(\Omega\times(0,T))}^2
\Bigr)
\Biggr)
\\
&\quad+
C_R
\frac{\|r_\tau(\Theta,\kappa)\|_{L^\infty(\Omega\times(0,T))}}
{\|\xi\|_W}
\Bigl(
\|u\|_{H^1(\Omega\times(0,T))}
+
\|\eta\|_{H^1(\Omega\times(0,T))}
\\
&\qquad\qquad\qquad\qquad\qquad\qquad\qquad\qquad
+
\|u\|_{H^1(\Omega\times(0,T))}^2
+
\|\eta\|_{H^1(\Omega\times(0,T))}^2
\Bigr).
\end{align*}
Since \(\Theta\in\overline{\mathbb T}\) is fixed and \(\tau'\) is bounded on
bounded subsets of \(\overline{\mathbb T}\), the operator \(\tau'(\Theta)\)
is bounded. Thus, there exists a constant \(C_\Theta>0\) such that
\[
\|\tau'(\Theta)\kappa\|_{L^\infty(\Omega\times(0,T))}
\leq
C_\Theta\|\kappa\|_{\mathbb T}
\leq
C_\Theta\|\xi\|_W.
\]
Moreover, since \(\tau\) is \frechet differentiable, one has
\[
\frac{
\|r_\tau(\Theta,\kappa)\|_{L^\infty(\Omega\times(0,T))}
}{
\|\kappa\|_{\mathbb T}
}
\longrightarrow 0
\qquad\text{as }\|\kappa\|_{\mathbb T}\to0.
\]
If \(\kappa=0\), then \(r_\tau(\Theta,\kappa)=0\). Otherwise, since
\(\|\kappa\|_{\mathbb T}\leq\|\xi\|_W\), we obtain
\[
\frac{
\|r_\tau(\Theta,\kappa)\|_{L^\infty(\Omega\times(0,T))}
}{
\|\xi\|_W
}
\leq
\frac{
\|r_\tau(\Theta,\kappa)\|_{L^\infty(\Omega\times(0,T))}
}{
\|\kappa\|_{\mathbb T}
}
\longrightarrow 0
\qquad\text{as }\|\xi\|_W\to0.
\]
Since $\|\eta\|_{H^1(\Omega\times(0,T))}\leq \|\xi\|_W$ and 
$\|\kappa\|_{\mathbb T}\leq \|\xi\|_W$,
every term on the right-hand side tends to zero as \(\|\xi\|_W\to0\).
Therefore,
\[
\lim_{\|\xi\|_W\to 0}
\frac{
\|\mathcal A(f,s+\xi)-\mathcal A(f,s)-\mathcal{A}_s'(f,s)\xi\|_{\widetilde W}
}{
\|\xi\|_W
}
=0.
\]
Now, fix \(s\in W\). Since \(\mathcal A(\cdot,s)\) is linear in the
parameter variable, one has
\[
\mathcal A(f+q,s)-\mathcal A(f,s)-\mathcal{A}_f'(f,s)q=0
\]
for all admissible \(q\in X\) with \(f+q\in X\). Consequently, \(\mathcal A\)
is \frechet differentiable with respect to the parameter variable.

\noindent Since
$\tau\in C^1\bigl(\overline{\mathbb T};
L^\infty(\Omega\times(0,T))\bigr),$
and since the remaining terms in \(\mathcal A'\) are at most quadratic in the
state, it follows that for
$(f_n,s_n)\underset{n \to \infty}{\longrightarrow}(f,s)$ in 
$X\times W$,
where \(s_n=(u_n,\phi_n,\Theta_n)\) and \(s=(u,\phi,\Theta)\), one obtains
\[
\|\mathcal A'(f_n,s_n)-\mathcal A'(f,s)\|_
{\mathcal L(X\times W,\widetilde W)}
\underset{n \to \infty}{\longrightarrow} 0.
\]
Thus, \(\mathcal A\) is continuously \frechet differentiable.

\noindent For the \frechet differentiability of the observation operator \(\mathcal C\),
one obtains with an arbitrary \(\xi=(\eta,\psi,\kappa)\in W\) and
\(\phi+\psi=(\phi^0+\chi)+\psi\) that
\[
\mathcal C(f,s+\xi)-\mathcal C(f,s)-\mathcal{C}_s'(f,s)\xi
=
\frac{1}{\|\phi^e\|_{L^2(0,T)}}
\int_\Omega
\bigl(
p_2\eta_z-p_3\psi_z
\bigr)\psi_z\,\dd z.
\]
Hence, using H\"older's inequality, one obtains, analogously to above, that
\(\mathcal C\) is \frechet differentiable with respect to the state variable.
Similarly, since \(\mathcal C\) depends linearly on the parameters,
\(\mathcal C\) is \frechet differentiable with respect to the parameters as
well. The continuity of \(\mathcal C'\) follows from the same arguments as for
\(\mathcal A'\). Consequently,
\[
F\in C^1(X\times W,\widetilde W\times Y).
\]
Finally, by Theorem~\ref{wd} and Lemma~\ref{Cwd}, together with the previous
estimates, it follows that for every bounded subset \(B\subset X\times W\),
both \(F\) and \(F'\) are bounded on \(B\).
\end{proof}

\section{Conclusion}

\noindent Firstly, we proved existence of global weak solutions to a thermo-piezoelectric system governed by a Kelvin-Voigt damped/coupled hyperbolic-parabolic-elliptic PDE with Bochner functions as density, thermal conductivity, heat capacity, electrical permittivity and piezoelectric coupling parameters. 

\noindent Secondly, we modeled and analyzed the corresponding operators of the inverse parameter identification problem. 
Therein, we discussed well-definedness, boundedness and  continuous \frechet differentiability of the forward operator.

\section*{Acknowledgements}
\noindent  The authors would like to thank the German Research Foundation (DFG) for financial support within the research group 5208 NEPTUN (444955436).

\noindent The work on this paper was partly funded by the Deutsche Forschungsgemeinschaft (DFG, German Research Foundation) under Germany´s Excellence Strategy – The Berlin Mathematics Research Center MATH+ (EXC-2046/1, EXC-2046/2, project ID: 390685689).

\section*{Data Availability}
\noindent Not applicable. This paper is purely theoretical and contains no empirical data.

\section*{Competing Interests}
\noindent The authors declare no competing interests.

\printbibliography

@InProceedings{978-3-030-56356-1,
author="Avalishvili, G.
and Avalishvili, M.",
title="On variational methods of investigation of mathematical problems for thermoelastic piezoelectric solids",
booktitle="Applications of Mathematics and Informatics in Natural Sciences and Engineering",
year="2020",
publisher="Springer International Publishing",
address="Cham",
pages="1--18"
}

@phdthesis{Veronika,
  author      = {V. Schulze},
  title       = {Modeling and optimization of electrode configurations for piezoelectric material},
  type        = {doctoral thesis},
  school      = {Humboldt-Universität zu Berlin},
  year        = {2023},
}

@article{PDE_constrained,
author = {Leugering, G. and Stingl, M.},
title = {PDE-constrained optimization for advanced materials},
journal = {GAMM-Mitteilungen},
volume = {33},
number = {2},
pages = {209-229},
year = {2010}
}

@article{14,
author = {Nicaise, S.},
year = {2003},
pages = {},
title = {Stability and controllability of the elecromagneto-elastic system},
volume = {60},
journal = {Portugaliae Mathematica. Nova Série}
}

@book{temam2024,
  title={Navier-Stokes equations: Theory and numerical analysis},
  author={Temam, R.},
  volume={343},
  year={2024},
  publisher={American Mathematical Society}
}

@unpublished{KFH2W5a,
    author = {Kuess, R. and Friesen, O. and Claes, L. and Walther, A.},
    title = {Sensitivity-informed identification of temperature-dependent piezoelectric material parameters},
    note = {Optimization Online},
    url={https://optimization-online.org/?p=33478},
    year ={2026}
}

@unpublished{RKDWAW,
author = {Kuess, R. and Walter, D. and  Walther, A.},
title = {Modelling and analysis of an inverse parameter identification problem in piezoelectricity},
note = {Technical report}
}

@article{18,
title = {Contrôlabilité d'un corps piézoélectrique},
journal = {Comptes Rendus de l'Académie des Sciences - Series I - Mathematics},
volume = {333},
number = {3},
pages = {267-270},
year = {2001},
author = {Bernadette, M.}
}

@article{19,
author = {Leugering, G. and Novotny, A. A. and Menzala, G. Perla and Sokołowski, J.},
title = {Shape sensitivity analysis of a quasi-electrostatic piezoelectric system in multilayered media},
journal = {Mathematical Methods in the Applied Sciences},
volume = {33},
number = {17},
pages = {2118-2131},
year = {2010}
}

@article{drz047,
    author = {Antil, H. and Brown, T. S. and Sayas, F.-J.},
    title = "{A problem in control of elastodynamics with piezoelectric effects}",
    journal = {IMA Journal of Numerical Analysis},
    volume = {40},
    number = {4},
    pages = {2839-2870},
    year = {2019},
    month = {12}
}

@incollection{Amann1993,
  author    = {Amann, H.},
  title     = {Nonhomogeneous linear and quasilinear elliptic and parabolic boundary value problems},
  booktitle = {Function Spaces, Differential Operators and Nonlinear Analysis},
  series    = {Teubner-Texte zur Mathematik},
  volume    = {133},
  pages     = {9--126},
  publisher = {Teubner},
  address   = {Stuttgart},
  year      = {1993},
}

@ARTICLE{ClaesLankeitWinkler,
  author  = {Claes, L. and Lankeit, J. and Winkler, M.},
  title   = {A model for heat generation by acoustic waves in piezoelectric materials: Global large-data solutions},
  journal = {Math. Models Meth. Appl. Sci.},
  volume = {35},
pages = {2465-2512},
year = {2025}
}

@unpublished{WinlkerMultiThermo,
    author  = {Winkler, M.},
    title = {Global solvability and stabilization in multi-dimensional small-strain nonlinear thermoviscoelasticity},
 year          = {2026},
  note = {arXiv:2602.05964},
}

@ARTICLE{LSU,
    author = {Ladyzenskaja, O.A. and Solonnikov, V.A. and Ural'ceva, N.N.},
    title = {Linear and quasi-linear equations of parabolic type},
    journal = {Amer. Math. Soc. Transl.}, 
    volume  = {23},
    year    = {1968}
}

@ARTICLE{Fricke,
  author  = {Fricke, T.},
  title   = {Local and global solvability in a viscous wave equation involving general temperature-dependence},
  journal = {Acta Appl. Math.}, 
  volume    =  {200},
  year = {2025}
}

@unpublished{Meyer,
  author  = {Meyer, F.},
  title   = {Global smooth solutions in a one-dimensional thermoviscoelastic model with temperature-dependent paramaters},
year ={2026},
  note    = {arXiv:2602.05621}
}

@ARTICLE{Winklerweak1d,
    author  = {Winkler, M.},
    title   = {Large-data solutions in one-dimensional thermoviscoelasticity involving temperature-dependent viscosities},
    journal = {Z. Angew. Math. Phys.},
  volume  = {25},
  year    = {2025},
  number = {108}
}

@ARTICLE{winkler2,
    author  = {Winkler, M.},
    title = {Large-data regular solutions in a one-dimensional thermoviscoelastic
evolution problem involving temperature-dependent viscosities},
    journal = {J. Evol. Equ.},
  volume  = {76},
  year    = {2025},
  number = {192}
}

@article{Bies1,
    author = {P.M. Bies and T. Cieślak},
    title = {Global-in-time regular unique solutions with positive temperature to one-dimensional thermoelasticity},
    journal = {SIAM J. Anal.},
    volume    = {55},
    year =  {2023}
}

@article{Bies2,
    author = {P.M. Bies and T. Cieślak},
    title = {Time-asymptoticsof of a heated string},
    journal = {Math.Ann.},
    volume    = {391},
    year =  {2025}
}

@article{mielke,
    author = {A. Mielke and T. Roubiček},
    title = {Thermoviscoelasticity in Kelvin-Voigt rheology at large strains},
    journal = {Arch. Ration. Mech. Anal.},
    volume = {238},
    year = {2020}
}

@article{owc,
    author = {S. Owczarek and K. Wielgos},
    title = {On a thermo-visco-elastic model with nonlinear damping forces
 and $L^1$ temperature data},
    journal = {Math. Methods Appl. Sci.},
    volume = {46},
    year = {2023}
}

@article{rossi,
    author = {R. Rossi and T. Roubiček},
    title = {Adhesive contact delaminating at mixed mode, its thermodynamics
 and analysis},
    journal =  {Interfaces Free Bound},
    volume = {15},
    year = {2013}
}

@article{rubi,
    author = {T. Roubiček},
    title = {Nonlinearly coupled thermo-visco-elasticity},
    journal = {NoDEA Nonlinear Differential Equa
tions Appl.},
    volume = {20},
    year = {2013}
}

@article{yosh,
    author = {S. Yoshikawa, I. Pawlow and W.M. Zajaczkowski},
    title = {Quasi-linear thermoelasticity system
 arising in shape memory materials},
    journal = { SIAM J. Math. Anal.},
    volume = {38},
    year = {2007}
}

@article{zimmer,
    author = {J. Zimmer},
    title = {Global existence of a nonlinear system in thermoviscoelasticity with nonconvex energy},
    journal = {J. Math. Anal. Appl.},
    volume = {292},
    year = {2004}
}

@book{henry1981geometric,
  author    = {Henry, D.},
  title     = {Geometric theory of semilinear parabolic equations},
  series    = {Lecture Notes in Mathematics},
  volume    = {840},
  publisher = {Springer},
  address   = {Berlin-Heidelberg-New York},
  year      = {1981}
}

@ARTICLE{winkler3,
    author = {L. Claes and M. Winkler},
    title = {Describing smooth small-data solutions to a quasilinear hyperbolic
parabolic system by $W^{1,p}$ energy analysis.},
    journal = {Nonlinear Anal. Real World Appl.},
    volume = {91},
pages = {104580},
year = {2026}
}
\end{document}